\documentclass[reqno, a4paper]{amsart}

\usepackage{fixltx2e}
\usepackage[T1]{fontenc}
\usepackage[british]{babel}
\usepackage{amsfonts, amssymb, amsthm, amsmath}
\usepackage[all]{xy}
\usepackage{hyperref}
\usepackage{mathbbol}
\usepackage{calrsfs}
\usepackage{mathabx}
\usepackage{wasysym}
\usepackage{graphicx}

\theoremstyle{plain}
\newtheorem{lemma}[subsection]{Lemma}
\newtheorem{theorem}[subsection]{Theorem}
\newtheorem{proposition}[subsection]{Proposition}
\newtheorem{corollary}[subsection]{Corollary}

\theoremstyle{definition}
\newtheorem{example}[subsection]{Example}
\newtheorem{examples}[subsection]{Examples}
\newtheorem{definition}[subsection]{Definition}
\newtheorem{remark}[subsection]{Remark}

\setcounter{tocdepth}{1}

\newenvironment{tfae}
{
\begin{enumerate}}
{\end{enumerate}}

\newcommand{\noproof}{\hfill\qed}
\newcommand{\defn}{\textbf}
\newcommand{\DefEq}{\coloneq}

\newcommand{\join}{\vee}

\renewcommand{\Im}{\mathrm{Im}}

\newcommand{\To}{\Rightarrow}
\newcommand{\gauche}{\langle}
\newcommand{\droite}{\rangle}
\newcommand{\links}{\lgroup}
\newcommand{\rechts}{\rgroup}
\newcommand{\cosmash}{\diamond}
\newcommand{\mus}[2]{\left\links\begin{smallmatrix} #1 & #2 \end{smallmatrix}\right\rechts}
\newcommand{\muss}[3]{\left\links\begin{smallmatrix} {#1} & {#2} & {#3}\end{smallmatrix}\right\rechts}
\newcommand{\normal}{\ensuremath{\lhd}}

\DeclareMathOperator{\cod}{cod}
\DeclareMathOperator{\Ker}{Ker}
\DeclareMathOperator{\op}{op}
\DeclareMathOperator{\Aut}{Aut}

\newcommand{\B}{\ensuremath{\mathcal{B}}}
\newcommand{\C}{\ensuremath{\mathcal{C}}}
\newcommand{\D}{\ensuremath{\mathcal{D}}}
\newcommand{\E}{\ensuremath{\mathcal{E}}}

\newcommand{\EE}{\ensuremath{\mathbb{E}}}
\newcommand{\TT}{\ensuremath{\mathbb{T}}}

\newcommand{\Ab}{\ensuremath{\mathsf{Ab}}}
\newcommand{\Act}{\ensuremath{\mathsf{Act}}}
\newcommand{\Arr}{\ensuremath{\mathsf{Arr}}}
\newcommand{\Cat}{\ensuremath{\mathsf{Cat}}}
\newcommand{\CMon}{\ensuremath{\mathsf{CMon}}}
\newcommand{\Eq}{\ensuremath{\mathsf{Eq}}}
\newcommand{\Ext}{\ensuremath{\mathsf{Ext}}}
\newcommand{\CExt}{\ensuremath{\mathsf{CExt}}}
\newcommand{\Gp}{\ensuremath{\mathsf{Gp}}}
\newcommand{\Gpd}{\ensuremath{\mathsf{Gpd}}}
\newcommand{\Haus}{\ensuremath{\mathsf{Haus}}}
\newcommand{\HComp}{\ensuremath{\mathsf{HComp}}}
\newcommand{\Inh}{\ensuremath{\mathsf{GS}}}
\newcommand{\Mon}{\ensuremath{\mathsf{Mon}}}
\newcommand{\MonC}{\ensuremath{\mathsf{MonC}}}
\newcommand{\Pt}{\ensuremath{\mathsf{Pt}}}
\newcommand{\PXMod}{\ensuremath{\mathsf{PXMod}}}
\newcommand{\RG}{\ensuremath{\mathsf{RG}}}

\newcommand{\SPt}{\ensuremath{\mathsf{SPt}}}
\newcommand{\XMod}{\ensuremath{\mathsf{XMod}}}
\newcommand{\Nil}{\ensuremath{\mathsf{Nil}}}
\newcommand{\Top}{\ensuremath{\mathsf{Top}}}
\newcommand{\Set}{\ensuremath{\mathsf{Set}}}
\newcommand{\Hopf}{\ensuremath{\mathsf{HopfAlg}_{K, \mathsf{coc}}}}

\newcommand{\Atwo}{{\rm (A2)}}
\newcommand{\ACC}{{\rm (ACC)}}

\newcommand{\CI}[1]{{\rm (COI\,#1)}}
\newcommand{\UCE}{{\rm (UCE)}}
\newcommand{\FWACC}{{\rm (FWACC)}}
\newcommand{\LACC}{{\rm (LACC)}}
\newcommand{\NH}{{\rm (NH)}}
\newcommand{\SH}{{\rm (SH)}}

\newcommand{\SSH}{{\rm (SSH)}}

\def\pullback{
 \ar@{-}[]+R+<6pt,-1pt>;[]+RD+<6pt,-6pt>%
 \ar@{-}[]+D+<1pt,-6pt>;[]+RD+<6pt,-6pt>}
 
\def\pushout{%
 \ar@{-}[]+L+<-6pt,1pt>;[]+LU+<-6pt,6pt>%
 \ar@{-}[]+U+<-1pt,6pt>;[]+LU+<-6pt,6pt>}

\def\splitpullback{%
 \ar@{-}[]+R+<6pt,-.51ex>;[]+RD+<6pt,-6pt>%
 \ar@{-}[]+D+<.51ex,-6pt>;[]+RD+<6pt,-6pt>}

\def\skewpullback{%
 \ar@{-}[]+LD+<-6pt,-6pt>;[]+LDD+<-6pt,-15.5pt>%
 \ar@{-}[]+D+<-3pt,-6.5pt>;[]+LDD+<-6pt,-15.5pt>}

\newdir{>>}{{}*!/3.5pt/:(1,-.2)@^{>}*!/3.5pt/:(1,+.2)@_{>}*!/7pt/:(1,-.2)@^{>}*!/7pt/:(1,+.2)@_{>}}
\newdir{ >>}{{}*!/8pt/@{|}*!/3.5pt/:(1,-.2)@^{>}*!/3.5pt/:(1,+.2)@_{>}}
\newdir{ |>}{{}*!/-3.5pt/@{|}*!/-8pt/:(1,-.2)@^{>}*!/-8pt/:(1,+.2)@_{>}}
\newdir{ >}{{}*!/-8pt/@{>}}
\newdir{>}{{}*:(1,-.2)@^{>}*:(1,+.2)@_{>}}
\newdir{<}{{}*:(1,+.2)@^{<}*:(1,-.2)@_{<}}

\hyphenation{cat-e-go-ri-cal cat-e-go-ries e-quiv-a-len-ces e-quiv-a-len-ce co-ker-nels gen-er-al-ized ex-act-ness ex-ten-sion Ja-ne-lid-ze pro-jec-tive nil-po-tent co-ker-nel a-sphe-ri-cal group-oid al-ter-na-ting semi-lat-tice semi-lat-tices e-quiv-a-lent abe-li-an-i-za-tion com-mu-ta-tor com-mu-ta-tors prop-o-si-tion cen-tral def-i-ni-tion re-sult re-sults di-rec-tion di-rec-tions com-po-nent com-po-nents de-nom-i-na-tor in-ter-pre-ta-tion di-groups sub-sec-tion cen-tral-i-ty co-ho-mo-lo-gy di-men-sion-al be-tween de-ter-mined de-ter-mine op-er-a-tor op-er-a-tors al-ge-bras can-di-dates ma-trix mul-ti-pli-ca-tions na-tu-ral-ity mul-ti-pli-ca-tive de-ter-mine de-ter-mines more-o-ver sim-i-lar-ly ap-pli-ca-tions ad-mis-si-ble con-ju-ga-tion com-mu-ta-tive co-prod-ucts as-so-ci-a-tor as-so-ci-a-tors com-mu-ta-tive com-mu-ta-ti-vi-ty lou-vain e-quiv-a-lent-ly con-struc-tions pro-to-split re-spec-tive-ly sub-ob-ject co-he-rent an-ti-ad-di-tive al-ge-bra-i-cal-ly co-com-mu-ta-tive push-outs}

\begin{document}

\title{Algebraically coherent categories}

\author{Alan S.~Cigoli}
\author{James R.~A.~Gray}
\author{Tim Van~der Linden}

\email{alan.cigoli@unimi.it}
\email{jamesgray@sun.ac.za}
\email{tim.vanderlinden@uclouvain.be}

\address{Dipartimento di Matematica, Universit\`a degli Studi di Milano, Via Saldini 50, 20133 Milano, Italy}
\address{Mathematics Division, Department of Mathematical Sciences, Stellenbosch University, Private Bag X1, Matieland 7602, South Africa}
\address{Institut de Recherche en Math\'ematique et Physique, Universit\'e catholique de Louvain, chemin du cyclotron~2 bte~L7.01.02, 1348 Louvain-la-Neuve, Belgium}

\thanks{The first author's research was partially supported by FSE, Regione Lombardia.
The first two authors would like to thank the Institut de Recherche en Math\'ematique et Physique (IRMP) for its kind hospitality during their respective stays in Louvain-la-Neuve.
The third author is a Research Associate of the Fonds de la Recherche Scientifique--FNRS}

\begin{abstract}
We call a finitely complete category \emph{algebraically coherent} when the change-of-base functors of its fibration of points are \emph{coherent}, which means that they preserve finite limits and jointly strongly epimorphic pairs of arrows.
We give examples of categories satisfying this condition; for instance, coherent categories, \emph{categories of interest} in the sense of Orzech, and (compact) Hausdorff algebras over a semi-abelian algebraically coherent theory.
We study equivalent conditions in the context of semi-abelian categories, as well as some of its consequences: including amongst others, strong protomodularity, and normality of Higgins commutators for normal subobjects, and in the varietal case, fibre-wise algebraic cartesian closedness. 
\end{abstract}

\subjclass[2010]{20F12, 08C05, 17A99, 18B25, 18G50}
\keywords{Coherent functor; Smith, Huq, Higgins commutator; semi-abelian, locally algebraically cartesian closed category; category of interest; compact Hausdorff algebra}

\date{\today}

\maketitle


\section{Introduction}
The aim of this article is to study a condition which recently arose in some loosely interrelated categorical-algebraic investigations~\cite{MM, CigoliMontoliCharSubobjects, CMM}: we ask of a finitely complete category that the change-of-base functors of its fibration of points are \emph{coherent}, which means that they preserve finite limits and jointly strongly epimorphic pairs of arrows. 

Despite its apparent simplicity, this property---which we shall call \emph{algebraic coherence}---has some important consequences.
For instance, any algebraically coherent semi-abelian category~\cite{Janelidze-Marki-Tholen} satisfies the so-called \emph{Smith is Huq} condition \SH.
In fact (see Section~\ref{Section Consequences}) it also satisfies the \emph{strong protomodularity} condition as well as the conditions~\SSH, which is a strong version of \SH, and \NH, \emph{normality of Higgins commutators of normal subobjects}---studied in~\cite{Bourn2004,Borceux-Bourn}, \cite{Borceux-Bourn,MFVdL}, \cite{MFVdL3} and~\cite{AlanThesis,CGrayVdL1}, respectively.
Nevertheless, there are many examples including all \emph{categories of interest} in the sense of Orzech~\cite{Orzech} (Theorem~\ref{theorem:ci.jse}).
In particular, the categories of groups, (commutative) rings (not necessarily unitary), Lie algebras over a commutative ring with unit, Poisson algebras and associative algebras are all examples, as well as the categories of (compact) Hausdorff groups, Lie algebras etc.
Knowing that a category is not only semi-abelian, but satisfies these additional conditions is crucial for many results in categorical algebra, in particular to applications in (co)homology theory.
For instance, the description of internal crossed modules~\cite{Janelidze} becomes simpler when \SH\ holds~\cite{MFVdL,HVdL}; the theory of universal central extensions depends on the validity of both \SH\ and \NH~\cite{CVdL,GrayVdL1}; and under \SH\ higher central extensions admit a characterisation in terms of binary commutators which helps in the interpretation of (co)homology groups~\cite{RVdL3,RVdL2}. 

The concept of \emph{algebraically coherent} category is meant to be an algebraic version of the concept of \emph{coherent} category~\cite{Johnstone:Elephant}, as explained by a certain formal parallel between topos theory and categorical algebra~\cite{Janelidze:Topos-talk}.
The key idea is that notions which in topos theory are expressed by properties of the basic fibration $\cod\colon \Arr(\C) \to \C$ may have a meaningful counterpart in categorical algebra when the basic fibration is replaced by the fibration of points $\cod\colon \Pt(\C) \to \C$.
That is to say, the slice categories $(\C\downarrow X)$ are replaced by the categories $\Pt_{X}(\C)$ of points over $X$ in $\C$, whose objects are split epimorphisms with a chosen splitting $(p\colon Y\to X, s\colon X\to Y)$, $ps=1_{X}$.
A successful example of this parallel is the second author's notion of \emph{algebraically cartesian closed} category---see~\cite{Gray2012,Bourn-Gray} and related works.
The present paper provides a new example: while a coherent category is a regular category $\C$ where every change-of-base functor of the basic fibration $\cod\colon \Arr(\C) \to \C$ is coherent, an algebraically coherent category is a finitely complete category $\C$ where the same property holds for the fibration of points $\cod\colon \Pt(\C) \to \C$.
As a consequence, certain results carry over from topos theory to categorical algebra for purely formal reasons: for instance, in parallel with the long-established~\cite[Lemma 1.5.13]{Johnstone:Elephant}, any locally algebraically cartesian closed category is algebraically coherent (Theorem~\ref{(LACC) implies (AC)}).
We shall see that this procedure (replacing the basic fibration with the fibration of points) is indeed necessary, because while a semi-abelian category~\cite{Janelidze-Marki-Tholen} may or may not be algebraically coherent---see Section~\ref{Examples} for a list of examples---it is never coherent, unless it is trivial (Proposition~\ref{proposition: unital coherent trivial}).

Section~\ref{(AC)} recalls the definitions of coherent functor and coherent category.
In Section~\ref{(ACC)} we define algebraic coherence, characterise it in terms of the kernel functor alone (Proposition~\ref{Proposition Characterisation kernel functor}, Theorem~\ref{Theorem Bemol}) and study its stability properties: closure under slices and coslices (Proposition~\ref{Prop Slice-coslice}), points (Corollary~\ref{Corollary Fibres}), and (regular epi)-reflections (Proposition~\ref{Proposition Reflective Subcat}).
In Section~\ref{Examples} we give examples, non-examples and counterexamples.
The main results here are Theorem~\ref{theorem:ci.jse} proving that all \emph{categories of interest} in the sense of Orzech are algebraically coherent, and Theorem~\ref{Topological algebras} which says that categories of (compact) Hausdorff algebras over a semi-abelian algebraically coherent theory are still algebraically coherent.
Section~\ref{Subsection Diamond} treats the relationship with two-nilpotency.
In Section~\ref{Section Consequences} we focus on categorical-algebraic consequences of algebraic coherence, mostly in the semi-abelian context. First of all, any pointed Mal'tsev category which is algebraically coherent is protomodular (Theorem~\ref{theorem: Mal'tsev implies protomodular} and the more general Theorem~\ref{theorem: Mal'tsev coherent proto non pointed}).
Next we show that \SH, \NH, \SSH\ and strong protomodularity are all consequences of algebraic coherence (see Theorems~\ref{(SH) + (NH)}, ~\ref{(SSH)} and ~\ref{(SSH) => (SP)}).
In a general context including all varieties of algebras, algebraic coherence implies fibre-wise algebraic cartesian closedness \FWACC\ (see Theorem~\ref{(FWACC)}), meaning that centralisers exist in the fibres of the fibration of points.
Section~\ref{Ternary commutator} focuses on the higher-order Higgins commutator and a proof of the \emph{Three Subobjects Lemma for normal subobjects} (Theorem~\ref{3SO}).
The final section gives a short summary of results that hold in the semi-abelian context.

\tableofcontents


\section{Coherent functors, coherent categories}\label{(AC)}
Recall that a cospan $(f,g)$ over an object $Z$ in an arbitrary category is called a
\begin{enumerate}
\item \defn{jointly extremally epimorphic pair}
when for each commutative diagram as on the left in Figure~\ref{jeep}, if $m$ is a monomorphism, then $m$ is an isomorphism;
\item
\defn{jointly strongly epimorphic pair} when for each commutative diagram as on the right in Figure~\ref{jeep}, if $m$ is a monomorphism, then there exists a unique morphism $\varphi\colon Z \to M$ such that $m \varphi =\phi$.
\end{enumerate}

\begin{figure}[h]
$\vcenter{\xymatrix{& M \ar@{->}[d]^(.6){m} \\
X \ar[r]_-{f} \ar[ru]^-{f'} & Z & Y \ar[ul]_-{g'} \ar[l]^-{g}}}
\qquad\qquad
\vcenter{\xymatrix{
& M \ar@{->}[d]_-{m}\\
&P\\
X \ar[r]_-{f} \ar@/^3ex/[ruu]^-{f'} & Z\ar[u]^-{\phi} & Y \ar@/_3ex/[uul]_-{g'} \ar[l]^-{g}}}
$
\caption{Jointly extremally epimorphic and jointly strongly epimorphic pairs}\label{jeep}
\end{figure}
In a similar way to extremal epimorphisms and strong epimorphisms (see for instance Section~1 in~\cite{Janelidze-Sobral-Tholen}) we have

\begin{lemma}
\label{lemma: strong implies extremal}
Let $\C$ be an arbitrary category and let $(f,g)$ be a cospan over an object~$Z$.
If the pair $(f,g)$ is jointly strongly epimorphic, then it is jointly extremally epimorphic.
If $\C$ has pullbacks then $(f,g)$ is jointly extremally epimorphic if and only if it is jointly strongly epimorphic.\noproof
\end{lemma}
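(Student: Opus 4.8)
The plan is to mirror the classical single-arrow argument relating strong and extremal epimorphisms, treating the cospan $(f,g)$ as a joint replacement for a single morphism. Both implications are essentially formal, and the only place where the hypothesis on $\C$ enters is the construction of a pullback in the second direction.

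For the first implication, I would start from an arbitrary factorisation witnessing the extremal condition: a monomorphism $m\colon M\to Z$ together with $f'\colon X\to M$ and $g'\colon Y\to M$ such that $f=mf'$ and $g=mg'$. The trick is to feed this into the strong condition by taking $P=Z$ and $\phi=1_Z$, so that the required equalities $mf'=\phi f$ and $mg'=\phi g$ hold automatically. Joint strong epimorphicity then yields a morphism $\varphi\colon Z\to M$ with $m\varphi=1_Z$, that is, a section of $m$. Since $m$ is a monomorphism, the equality $m\varphi m=m=m1_M$ forces $\varphi m=1_M$, so $\varphi$ is a two-sided inverse and $m$ is an isomorphism, as required.

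For the converse, assuming $\C$ has pullbacks, I would begin from a diagram witnessing the strong condition: a monomorphism $m\colon M\to P$, a morphism $\phi\colon Z\to P$, and morphisms $f'\colon X\to M$, $g'\colon Y\to M$ with $mf'=\phi f$ and $mg'=\phi g$. The key step is to form the pullback of $m$ along $\phi$, giving projections $\bar{m}\colon Q\to Z$ and $\bar{\phi}\colon Q\to M$ with $\phi\bar{m}=m\bar{\phi}$; since monomorphisms are stable under pullback, $\bar{m}$ is again a monomorphism. The compatibility equations let the universal property of this pullback produce morphisms $X\to Q$ and $Y\to Q$ through which $f$ and $g$ factor across $\bar{m}$. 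At this point the cospan $(f,g)$ factors through the monomorphism $\bar{m}$, so joint extremal epimorphicity forces $\bar{m}$ to be an isomorphism, and $\varphi=\bar{\phi}\bar{m}^{-1}$ then satisfies $m\varphi=\phi$.

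The uniqueness of $\varphi$ demanded by the definition is not a genuine obstacle: it is immediate from $m$ being a monomorphism, since any two morphisms $\varphi$, $\varphi'$ with $m\varphi=\phi=m\varphi'$ must coincide. Indeed, the only substantive hypothesis used anywhere is the existence of the single pullback of $m$ along $\phi$ in the second implication; everything else is purely diagrammatic. I therefore expect no serious difficulty, the main point being simply to set up the two specialisations correctly---collapsing $P$ to $Z$ with $\phi=1_Z$ in one direction, and pulling back along $\phi$ in the other.
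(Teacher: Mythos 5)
Your proof is correct and is exactly the standard argument the paper has in mind: the paper states this lemma without proof (citing the analogy with the classical single-morphism case in~\cite{Janelidze-Sobral-Tholen}), and your two specialisations---taking $P=Z$, $\phi=1_Z$ to get a split monomorphism which is then an isomorphism, and pulling back $m$ along $\phi$ to produce a monomorphism through which the cospan factors---are precisely that classical argument adapted to cospans. Nothing is missing; the uniqueness remark via monicity of $m$ is also the right observation.
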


\begin{lemma}
\label{lemma: composite of strong/extremal with jointly strong/extremal}
In an arbitrary category, let $(f\colon X\to Z,g\colon Y\to Z)$ be a cospan over~$Z$ and let $e\colon W\to X$ be a strong epimorphism.
\begin{enumerate}
\item $(f,g)$ is jointly extremally epimorphic if and only if $(fe,g)$ is jointly extremally epimorphic;
\item $(f,g)$ is jointly strongly epimorphic if and only if $(fe,g)$ is jointly strongly epimorphic.
\newcounter{temp}
\setcounter{temp}{\value{enumi}}
\end{enumerate}
Suppose now that $(f,g)$ is a jointly strongly epimorphic cospan and consider a morphism $p\colon Z \to V$.
\begin{enumerate}
\setcounter{enumi}{\value{temp}}
\item if $p$ is an extremal epimorphism, then $(pf,pg)$ is jointly extremally epimorphic;
\item if $p$ is a strong epimorphism, then $(pf,pg)$ is jointly strongly epimorphic.\noproof
\end{enumerate}
\end{lemma}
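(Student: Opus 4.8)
The plan is to reduce every claim to the defining diagonal fill-in property of a strong epimorphism: for a strong epimorphism $e$ and a monomorphism $m$, any commutative square having $e$ on one side and $m$ on the opposite side admits a unique diagonal. I would use this repeatedly, together with the universal properties of the two kinds of jointly epimorphic pairs recalled above and the elementary fact that a strong epimorphism is in particular an epimorphism.

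For part (1), one implication is immediate: if $m\colon M \to Z$ is a monomorphism through which both $f$ and $g$ factor, say $m f' = f$ and $m g' = g$, then $fe$ and $g$ factor through $m$ as well (via $f' e$ and $g'$), so jointly extremal-epimorphy of $(fe,g)$ forces $m$ to be an isomorphism. For the converse I would start from a factorisation $m h = fe$, $m g' = g$ through a monomorphism $m$, with $h\colon W\to M$, and apply the diagonal fill-in to the square $m h = fe$ (with $e$ strong epi and $m$ mono) to obtain $f'\colon X \to M$ with $m f' = f$; then $(f,g)$ factors through $m$ and the hypothesis gives that $m$ is an isomorphism. Part (2) is handled by the same two moves performed inside the richer diagram defining jointly strongly epimorphic pairs: pre-composition with $e$ gives one implication directly, and for the other I would fill in the square $m h = \phi f e$ (again $h\colon W\to M$) to produce $f'\colon X\to M$ with $m f' = \phi f$, after which the jointly strongly epimorphic property of $(f,g)$ supplies the required unique $\varphi\colon Z\to M$.

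For parts (3) and (4) the pair $(f,g)$ is jointly strongly epimorphic, and this is used in full strength. In (3), given a monomorphism $m\colon M\to V$ and maps $a\colon X\to M$, $b\colon Y\to M$ with $ma = pf$ and $mb = pg$, I would apply the universal property of $(f,g)$ with codomain $V$ and comparison map $p$ to obtain $\varphi\colon Z\to M$ with $m\varphi = p$; since $p$ is an extremal epimorphism that factors through the monomorphism $m$, it follows that $m$ is an isomorphism. In (4), starting from $m\colon M\to P$ mono, $\psi\colon V\to P$ and factorisations of $\psi p f$ and $\psi p g$ through $m$, I would first use $(f,g)$ jointly strongly epimorphic with comparison map $\psi p$ to get $\chi\colon Z\to M$ with $m\chi = \psi p$, then fill in the square $m\chi = \psi p$ (with $p$ strong epi and $m$ mono) to descend $\chi$ to the desired $\varphi\colon V\to M$ satisfying $m\varphi = \psi$.

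The computations are routine; the only points needing care are the bookkeeping of which universal property is invoked and the uniqueness clauses. In (4) uniqueness of $\varphi$ is not delivered by the fill-in alone but follows because $p$ is epic and $m$ monic: any competitor $\varphi'$ with $m\varphi' = \psi$ satisfies $m\varphi' p = \psi p = m\chi$, whence $\varphi' p = \chi = \varphi p$ and so $\varphi' = \varphi$. I would also emphasise that (3) genuinely requires the strongly epimorphic hypothesis rather than merely the extremal one: the extremal version does not produce the factorisation $\varphi$ of $p$ through $m$ that drives the argument, since the relevant cospan $(pf,pg)$ lives over $V$ and not over $Z$.
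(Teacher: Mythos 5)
Your proof is correct and complete: all four parts reduce, as they should, to the diagonal fill-in property of strong epimorphisms against monomorphisms, combined with the universal properties of the two kinds of jointly epimorphic cospans, and your treatment of the uniqueness clauses (automatic since $m$ is a monomorphism, or via $p$ epic) is sound. The paper states this lemma without proof, regarding it as routine in analogy with the standard facts on extremal and strong epimorphisms cited from Janelidze--Sobral--Tholen, and your argument is exactly the intended one.
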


\begin{lemma}
\label{lemma: joins and extremal}
For each commutative diagram
\[
\xymatrix{K \ar@{{ >}->}[r]^-{f'} \ar@{{ >}->}[rd]_{f} & M \ar@{{ >}->}[d] & L \ar@{{ >}->}[ld]^-{g} \ar@{{ >}->}[l]_-{g'}\\
& Z}
\]
in an arbitrary category, $M \leq Z$ is the join of $K\leq Z$ and $L\leq Z$ if and only if $(f',g')$ is jointly extremally epimorphic.
In particular
$(f,g)$ is jointly extremally epimorphic when the diagram above with $M =Z$ is a join.\noproof
\end{lemma}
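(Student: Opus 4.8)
The plan is to read the displayed triangle as a statement about the poset $\mathrm{Sub}(Z)$ of subobjects of $Z$: the monomorphisms $f$, $g$ and the vertical arrow $m\colon M\to Z$ exhibit $K$, $L$ and $M$ as subobjects of $Z$, while the factorisations $mf'=f$ and $mg'=g$ record exactly the inequalities $K\leq M$ and $L\leq M$. Under this dictionary the equivalence says that $M$ is the \emph{least} upper bound of $K$ and $L$ precisely when no proper subobject of $M$ already contains both $f'$ and $g'$, which is what joint extremal epimorphy of $(f',g')$ asserts. I would prove the two implications separately.

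For the forward direction, assume $M=K\vee L$ and let $n\colon N\to M$ be any monomorphism through which both $f'$ and $g'$ factor, say $f'=n\alpha$ and $g'=n\beta$. Then $mn\colon N\to Z$ is a monomorphism, and from $f=mf'=(mn)\alpha$ and $g=mg'=(mn)\beta$ one sees that $N$, regarded as a subobject of $Z$, is an upper bound of $K$ and $L$. Since $M$ is the join, $M\leq N$ over $Z$, i.e.\ there is $\phi\colon M\to N$ with $mn\phi=m$; as $m$ is monic this forces $n\phi=1_{M}$, so the monomorphism $n$ is a split epimorphism and hence an isomorphism. This shows $(f',g')$ is jointly extremally epimorphic, and it uses nothing beyond the hypotheses, so it holds in an arbitrary category.

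For the converse, assume $(f',g')$ is jointly extremally epimorphic. That $M$ is an upper bound of $K$ and $L$ is immediate from $f'$ and $g'$. To see it is the least one, take any subobject $n_{N}\colon N\to Z$ with $K\leq N$ and $L\leq N$, witnessed by $a\colon K\to N$ and $b\colon L\to N$ satisfying $n_{N}a=f$ and $n_{N}b=g$. I would form the pullback $P=M\times_{Z}N$ with projections $p\colon P\to M$ and $q\colon P\to N$; since $n_{N}$ is monic so is $p$. The equalities $mf'=f=n_{N}a$ and $mg'=g=n_{N}b$ let me factor $f'$ and $g'$ through $p$ by the universal property of the pullback, so the hypothesis makes $p$ an isomorphism; then $qp^{-1}\colon M\to N$ satisfies $n_{N}qp^{-1}=mpp^{-1}=m$, giving $M\leq N$. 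Hence $M=K\vee L$. The ``in particular'' clause is just the forward direction in the case $m=1_{Z}$, where $f'=f$, $g'=g$ and ``the diagram is a join'' reads $Z=K\vee L$.

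The only genuine obstacle is this converse: joint extremal epimorphy is a cancellation property about monomorphisms \emph{into} $M$, whereas an arbitrary upper bound $N$ sits over $Z$, so some construction is needed to manufacture a comparison map $M\to N$. The pullback $M\times_{Z}N$ (equivalently, the intersection of the subobjects $M$ and $L$'s common upper bound $N$ with $M$) does exactly this, which is why a little limit structure enters here even though the forward implication is purely formal. Alternatively, if one assumes that the join $K\vee L$ already exists, the same conclusion follows without pullbacks, by factoring $f'$ and $g'$ through the monomorphism $K\vee L\to M$ and invoking the hypothesis to see that it is invertible.
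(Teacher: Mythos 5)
Your argument is correct, and it supplies precisely the routine verification that the paper omits (the lemma is stated with its proof suppressed). Your forward direction is the standard split-monomorphism cancellation argument and is valid in any category, and your reading of the ``in particular'' clause as the forward direction in the case $m=1_{Z}$ is the intended one. The point you flag about the converse is genuine and worth stressing: manufacturing a comparison $M\to N$ from joint extremal epimorphy really does require forming the intersection $M\times_{Z}N$ (or, as in your alternative, assuming the join already exists), and this is not an artefact of your proof. In a literally arbitrary category the ``if'' direction of the lemma can fail: take the poset $\{K,L,M,N,Z\}$ with $K,L\leq M\leq Z$ and $K,L\leq N\leq Z$, where $M$ and $N$ are incomparable; then $(f',g')$ is jointly extremally epimorphic---the only subobject of $M$ containing both $K$ and $L$ is $M$ itself---yet $M$ is not the join of $K$ and $L$ in the subobject poset of $Z$, since $N$ is an upper bound not above $M$ (indeed no least upper bound exists there). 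So the phrase ``in an arbitrary category'' is literally accurate only for the ``only if'' direction; the paper is nevertheless safe because it remarks immediately after these lemmas that all categories considered in the sequel have finite limits, under which your pullback argument applies verbatim.
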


\begin{lemma}
\label{lemma: jointly becomes single}
For each diagram
\[
\xymatrix{
& X+Y\ar[d]^(0.6){\mus{f}{g}} &\\
X \ar[ur]^{\iota_X} \ar[r]_{f} & Z & Y \ar[ul]_{\iota_Y} \ar[l]^{g}
}
\]
in a category with binary coproducts, $f$ and $g$ are jointly extremally epimorphic / jointly strongly epimorphic if and only if~$\mus{f}{g}$ is an extremal epimorphism / strong epimorphism.\noproof
\end{lemma}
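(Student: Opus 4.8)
The plan is to exploit the universal property of the coproduct, which sets up a bijection between a single morphism out of $X+Y$ and a pair of morphisms out of $X$ and $Y$, and to match this bijection against the two defining diagrams of Figure~\ref{jeep}. Throughout I write $c \DefEq \mus{f}{g}\colon X+Y \to Z$ for the copairing, so that $c\iota_X = f$ and $c\iota_Y = g$, and I recall that a morphism $h\colon X+Y \to M$ is completely determined by the pair $(h\iota_X, h\iota_Y)$. I would treat the extremal and the strong cases in parallel, since the arguments differ only in the shape of the test diagram one feeds into this correspondence.

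For the extremal case, first suppose $(f,g)$ is jointly extremally epimorphic and let $c = m e$ be a factorisation with $m\colon M \to Z$ a monomorphism. Setting $f' \DefEq e\iota_X$ and $g' \DefEq e\iota_Y$ gives $mf' = c\iota_X = f$ and $mg' = c\iota_Y = g$, so the left-hand diagram of Figure~\ref{jeep} commutes; joint extremality forces $m$ to be an isomorphism, whence $c$ is an extremal epimorphism. Conversely, suppose $c$ is extremal and that a monomorphism $m\colon M \to Z$ together with $f'$ and $g'$ makes the left-hand diagram commute. The copairing $e \DefEq \mus{f'}{g'}$ then satisfies $me\iota_X = mf' = f = c\iota_X$ and similarly on $\iota_Y$, so $me = c$ by the universal property of the coproduct; as $m$ is monic and $c$ extremal, $m$ is an isomorphism.

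The strong case runs along the same lines, now using the right-hand diagram of Figure~\ref{jeep}: a test datum consists of a monomorphism $m\colon M \to P$, a morphism $\phi\colon Z \to P$ and maps $f'$, $g'$ with $mf' = \phi f$ and $mg' = \phi g$, equivalently (after copairing) of a commutative square $mu = \phi c$ with $u \DefEq \mus{f'}{g'}$. In each direction one extracts the required diagonal $\varphi\colon Z \to M$ from the corresponding universal property, exactly as above. The one point that needs care---and the only genuine obstacle---is the bookkeeping of the uniqueness clauses: the definition of a strong epimorphism demands a diagonal that is unique subject to \emph{both} $\varphi c = u$ and $m\varphi = \phi$, whereas joint strong epimorphy asks only for uniqueness of $\varphi$ satisfying $m\varphi = \phi$. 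These coincide precisely because $m$ is a monomorphism: from $m\varphi = \phi$ one gets $m\varphi f = \phi f = mf'$, hence $\varphi f = f'$, and likewise $\varphi g = g'$, so that $\varphi c = u$ holds automatically. Once this is observed the two uniqueness conditions match up, and the claimed equivalence follows.
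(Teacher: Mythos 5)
Your proof is correct, and it is exactly the routine argument the paper omits (the lemma is stated without proof): the universal property of the coproduct translates each test diagram of Figure~\ref{jeep} for the pair $(f,g)$ into the corresponding factorisation or lifting problem for $\mus{f}{g}$ and back, and your observation that the two uniqueness clauses agree because $m$ is a monomorphism is the only point worth spelling out. Nothing further is needed.
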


Since in the rest of the paper all categories considered will have finite limits we will freely interchange ``jointly strongly epimorphic'' and ``jointly extremally epimorphic'' (see Lemma~\ref{lemma: strong implies extremal} above).

\begin{definition}\label{Definition Coherent Functor}
A functor between categories with finite limits is called \defn{coherent} if it preserves finite limits and jointly strongly epimorphic pairs.
\end{definition}

Since an arrow is a monomorphism if and only if its kernel pair is the discrete equivalence relation, it follows that any functor which preserves kernel pairs, preserves monomorphisms.
In particular every coherent functor preserves monomorphisms.
Note that in a regular category a morphism $f$ is a regular epimorphism if and only if $(f,f)$ is a jointly strongly epimorphic pair.
It easily follows that a coherent functor between regular categories is always \defn{regular}, that is, it preserves finite limits and regular epimorphisms. 

The next proposition shows that for regular categories, the above definition coincides with the one given in Section~A.1.4 of~\cite{Johnstone:Elephant}.

\begin{proposition}\label{Proposition joins}
A regular functor between regular categories is coherent if and only if it preserves binary joins of subobjects.
\end{proposition}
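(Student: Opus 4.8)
The plan is to work entirely through the dictionary between binary joins of subobjects and jointly extremally epimorphic pairs of monomorphisms supplied by Lemma~\ref{lemma: joins and extremal}, combined with the standard fact that a regular functor preserves $(\text{regular epi},\text{mono})$-factorisations and hence images. Since our functor $F \colon \C \to \D$ is regular, it preserves finite limits, and therefore kernel pairs and monomorphisms; so the only content of coherence left to analyse is the preservation of jointly strongly epimorphic pairs. As both categories have pullbacks, Lemma~\ref{lemma: strong implies extremal} lets me treat ``jointly strongly epimorphic'' and ``jointly extremally epimorphic'' as interchangeable throughout.

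For the implication that coherence forces preservation of joins, I would begin with subobjects $k\colon K \rightarrowtail Z$ and $\ell\colon L \rightarrowtail Z$ whose join is $m\colon M \rightarrowtail Z$, and factor $k$ and $\ell$ through $M$. By Lemma~\ref{lemma: joins and extremal} the inclusions $K \rightarrowtail M$ and $L \rightarrowtail M$ then form a jointly extremally epimorphic pair. Applying $F$ produces monomorphisms $FK \rightarrowtail FM \rightarrowtail FZ$ and $FL \rightarrowtail FM \rightarrowtail FZ$; coherence makes the pair $(FK \to FM, FL \to FM)$ jointly strongly, hence jointly extremally, epimorphic, so a second use of Lemma~\ref{lemma: joins and extremal}, now inside $\D$, identifies $FM$ with the join $FK \join FL$ among the subobjects of $FZ$. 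This says precisely that $F(K \join L) = FK \join FL$.

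For the converse I would reduce an arbitrary jointly strongly epimorphic cospan $(f\colon X \to Z, g\colon Y \to Z)$ to its images. Writing $f = m_f e_f$ and $g = m_g e_g$ for the regular-epi--mono factorisations, Lemma~\ref{lemma: composite of strong/extremal with jointly strong/extremal}(2) lets me replace $f$ by $m_f$ and $g$ by $m_g$ without disturbing joint strong epimorphicity, and Lemma~\ref{lemma: joins and extremal} then rephrases the hypothesis as the equality $\Im(f) \join \Im(g) = Z$ of subobjects of $Z$. Because $F$ is regular it carries these factorisations to factorisations, so $\Im(Ff) = F(\Im(f))$ and $\Im(Fg) = F(\Im(g))$ as subobjects of $FZ$; preservation of binary joins now yields
\[
\Im(Ff) \join \Im(Fg) = F\bigl(\Im(f) \join \Im(g)\bigr) = F(Z) = FZ,
\]
the top subobject, where I use that $F(1_Z) = 1_{FZ}$. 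Applying Lemma~\ref{lemma: joins and extremal} and Lemma~\ref{lemma: composite of strong/extremal with jointly strong/extremal}(2) once more shows $(Ff, Fg)$ to be jointly strongly epimorphic, as required.

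The main obstacle I anticipate is bookkeeping rather than conceptual: one must verify that $F$ genuinely sends the image of a map to the image of its $F$-image \emph{as a subobject}, not merely up to a comparison isomorphism, and that both applications of Lemma~\ref{lemma: joins and extremal} are made on honestly commuting triangles of monomorphisms. The decisive point in the converse is that the condition ``the join is everything'' is preserved, and this is exactly what preservation of images together with preservation of binary joins guarantees.
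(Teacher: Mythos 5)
Your proof is correct and takes essentially the same route as the paper's: both arguments reduce jointly strongly epimorphic cospans to their images via Lemma~\ref{lemma: composite of strong/extremal with jointly strong/extremal}~(b), use that regular functors preserve (regular epi, mono)-factorisations, and translate between binary joins and jointly extremally epimorphic pairs of monomorphisms via Lemma~\ref{lemma: joins and extremal}. Your write-up merely spells out, direction by direction, what the paper compresses into three sentences.
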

\begin{proof}
Note that by Lemma~\ref{lemma: composite of strong/extremal with jointly strong/extremal} (b) a cospan $(f,g)$ in a regular category is jointly strongly epimorphic if and only if the cospan $(\Im(f),\Im(g))$ is jointly strongly epimorphic.
Note also that any regular functor preserves (regular epi, mono)-factorisations.
Therefore the proof follows from Lemma~\ref{lemma: joins and extremal}: under either condition diagrams of the form as in Lemma~\ref{lemma: joins and extremal} are preserved.
\end{proof}

\begin{proposition}\label{Proposition binary sums}
Let $F\colon \C \to \D$ be a left exact functor between categories with finite limits and binary coproducts.
The following are equivalent:
\begin{tfae}
\item $F$ is coherent;
\item $F$ preserves strong epimorphisms and the comparison morphism
\[
{\mus{F(\iota_X)}{F(\iota_Y)}\colon F(X)+F(Y)\to F(X+Y)}
\]
is a strong epimorphism for all $X$, $Y\in \C$.
\end{tfae}
When in addition $\C$ is pointed (or more generally when coproduct injections are monomorphisms), these condition are further equivalent to:
\newcounter{tmp}
\setcounter{tmp}{\value{enumi}}
\begin{tfae}
\setcounter{enumi}{\value{tmp}}
\item $F$ preserves strong epimorphisms and binary joins;
\item $F$ preserves strong epimorphisms and joins of the form
\[
\xymatrix{
X \ar[r]^-{\iota_X} & X+Y & Y. \ar[l]_-{\iota_Y}
}
\]
\end{tfae}
\end{proposition}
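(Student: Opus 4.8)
The plan is to reduce every assertion about jointly strongly epimorphic \emph{pairs} to an assertion about a \emph{single} strong epimorphism via Lemma~\ref{lemma: jointly becomes single}, and then to exploit the factorisation
\[
\mus{F(f)}{F(g)} = F\mus{f}{g}\circ\mus{F(\iota_X)}{F(\iota_Y)}
\]
valid for every cospan $(f\colon X\to Z, g\colon Y\to Z)$, which holds because two morphisms out of a coproduct agree as soon as they agree after precomposition with both injections. Combined with the two standard facts that a composite of strong epimorphisms is a strong epimorphism and that $hk$ being a strong epimorphism forces $h$ to be one, this identity lets me transport strong epimorphy back and forth across $F$. Since $F$ is left exact it preserves monomorphisms, so I only ever need to verify the jointly-strongly-epimorphic half of coherence, and I freely pass between ``extremally'' and ``strongly'' by Lemma~\ref{lemma: strong implies extremal}.

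For (i)\implies(ii): given a strong epimorphism $e\colon A\to B$, the pair $(e,e)$ is jointly strongly epimorphic, since $\mus{e}{e}=e\circ\nabla$ with $\nabla\colon A+A\to A$ a split (hence strong) epimorphism; as $F$ preserves such pairs, $\mus{F(e)}{F(e)}=F(e)\circ\nabla$ is a strong epimorphism, whence so is $F(e)$. The comparison morphism is strong epi because $(\iota_X,\iota_Y)$ is jointly strongly epimorphic ($\mus{\iota_X}{\iota_Y}=1_{X+Y}$), so $(F(\iota_X),F(\iota_Y))$ is too, and Lemma~\ref{lemma: jointly becomes single} identifies $\mus{F(\iota_X)}{F(\iota_Y)}$ as a strong epimorphism. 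For (ii)\implies(i): if $(f,g)$ is jointly strongly epimorphic then $\mus{f}{g}$ is strong epi, hence so is $F\mus{f}{g}$; composing it with the strong epi comparison morphism, the displayed factorisation shows $\mus{F(f)}{F(g)}$ is strong epi, i.e.\ $(F(f),F(g))$ is jointly strongly epimorphic.

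Now assume coproduct injections are monomorphisms, so that $X$, $Y$ really are subobjects of $X+Y$, and I close the cycle (ii)\implies(iii)\implies(iv)\implies(ii). Here (iii)\implies(iv) is trivial, (iv) being the instance of (iii) for $\iota_X$, $\iota_Y$. For (iv)\implies(ii): by Lemma~\ref{lemma: joins and extremal} the join of $X,Y\leq X+Y$ is $X+Y$ itself, and preserving it means $F(X)\join F(Y)=F(X+Y)$ inside $\mathrm{Sub}(F(X+Y))$, which by the same lemma says $(F(\iota_X),F(\iota_Y))$ is jointly extremally, equivalently strongly, epimorphic, i.e.\ the comparison morphism is strong epi. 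The main direction is (ii)\implies(iii): for subobjects $K$, $L$ of $Z$ with join $m\colon M\to Z$ and induced monomorphisms $f'\colon K\to M$, $g'\colon L\to M$, Lemma~\ref{lemma: joins and extremal} gives that $(f',g')$ is jointly strongly epimorphic, so $\mus{f'}{g'}$ and hence $F\mus{f'}{g'}$ are strong epi; composing with the comparison morphism for $K$, $L$, the factorisation makes $\mus{F(f')}{F(g')}$ strong epi, so $(F(f'),F(g'))$ is jointly extremally epimorphic and Lemma~\ref{lemma: joins and extremal} returns $F(M)=F(K)\join F(L)$.

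The main obstacle is bookkeeping rather than depth: I must apply the factorisation in the correct coproduct (over $K+L$, not $X+Y$, in the last step) and translate ``preserves binary joins'' into the language of jointly extremally epimorphic pairs in both directions through Lemma~\ref{lemma: joins and extremal}, using left exactness so that $F(K)$, $F(L)$, $F(M)$ are genuine subobjects of $F(Z)$. The hypothesis that coproduct injections are monomorphisms enters only from (iii) onwards, precisely so that the cospans $X\to X+Y\leftarrow Y$ present bona fide subobjects whose join may be spoken of.
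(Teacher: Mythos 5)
Your proof is correct and takes essentially the same approach as the paper's: the key factorisation $\mus{F(f)}{F(g)}=F\mus{f}{g}\circ\mus{F(\iota_X)}{F(\iota_Y)}$ combined with Lemma~\ref{lemma: jointly becomes single} for (i)~$\Leftrightarrow$~(ii), and Lemma~\ref{lemma: joins and extremal} for the join conditions, is exactly the paper's argument. The only differences are cosmetic: you spell out the implications the paper dismisses as trivial (such as preservation of strong epimorphisms via the cospan $(e,e)$), and you close the cycle as (ii)~$\Rightarrow$~(iii)~$\Rightarrow$~(iv)~$\Rightarrow$~(ii) rather than the paper's (i)~$\Rightarrow$~(iii)~$\Rightarrow$~(iv)~$\Rightarrow$~(ii).
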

\begin{proof}
For any jointly strongly epimorphic cospan $(f,g)$ over an object $Z$ consider the diagram
\[
\xymatrix@R=7ex{
& F(X) + F(Y)\ar[d]|{\mus{F(\iota_X)}{F(\iota_Y)}} &\\
& F(X+Y)\ar[d]|{F\mus{f}{g}} &\\
F(X) \ar@/^3ex/@{{ >}->}[uur]^{\iota_{F(X)}} \ar@{{ >}->}[ur]^{F(\iota_X)} \ar@{{ >}->}[r]_{F(f)} & F(Z) & F(Y). \ar@{{ >}->}[ul]_{F(\iota_Y)} \ar@{{ >}->}[l]^{F(g)}\ar@/_3ex/@{{ >}->}[uul]_{\iota_{F(Y)}}
}
\]
Suppose that (ii) holds.
It follows from Lemma~\ref{lemma: jointly becomes single} and the fact that $F$ preserves strong epimorphisms that $F\mus{f}{g}$ is a strong epimorphism.
Therefore the vertical composite $F\mus{f}{g}\mus{F(\iota_X)}{F(\iota_Y)}=\mus{F(f)}{F(g)}$ is a strong epimorphism and so according to Lemma~\ref{lemma: jointly becomes single} the cospan $(F(f),F(g))$ is jointly strongly epimorphic.
This proves that (ii) implies~(i).
Since (ii) and (iii) follow trivially from (i), and (iv) follows from (iii), it remains only to show that (iv) implies (ii).
However this follows from Lemma
\ref{lemma: joins and extremal} and~\ref{lemma: jointly becomes single}. 
\end{proof}

\begin{definition}\label{Definition Coherent Category}
A regular category with finite coproducts $\C$ is \defn{coherent} in the sense of~\cite{Johnstone:Elephant} (and called a \defn{pre-logos} in~\cite{Alligators}) if and only if, for any morphism $f\colon{X\to Y}$ in $\C$, the change-of-base functor $f^{*}\colon (\C\downarrow Y)\to (\C\downarrow X)$ is coherent.
\end{definition}

The categories $\Gp$ and $\Ab$ (all groups, abelian groups) are well-known not to be coherent.
In fact, the only semi-abelian (or, more generally, unital) coherent category is the trivial one.
Recall from~\cite{B0,Borceux-Bourn} that a pointed finitely complete category is \defn{unital} when for any pair of objects $X$, $Y$ the cospan
\[
\xymatrix{X \ar[r]^-{\gauche 1_{X},0\droite} & X\times Y & Y \ar[l]_-{\gauche 0,1_{Y}\droite} }
\]
is jointly strongly epimorphic.

\begin{lemma}\label{lemma: unital diagonal is not coherent}
Let $\C$ be a unital category.
For each object $X$ in $\C$ the pullback functor $\gauche 1_{X},1_{X}\droite^{*} \colon (\C \downarrow (X\times X)) \to (\C \downarrow X)$ is coherent if and only if $X$ is a zero object.
\end{lemma}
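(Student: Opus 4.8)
The plan is to use that $\gauche 1_{X},1_{X}\droite^{*}$ is a pullback functor, hence automatically left exact, so that coherence amounts to preservation of jointly strongly epimorphic pairs — or, since $\C$ has pullbacks, of jointly extremally epimorphic ones (Lemma~\ref{lemma: strong implies extremal}). One implication is then immediate: if $X$ is a zero object then so is $X\times X$, the diagonal $\gauche 1_{X},1_{X}\droite\colon X\to X\times X$ is an isomorphism, and pullback along an isomorphism is an equivalence of categories, which is trivially coherent.

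For the converse I would feed a single well-chosen pair into $\gauche 1_{X},1_{X}\droite^{*}$. Unitality of $\C$ says precisely that the cospan $X\xrightarrow{\gauche 1_{X},0\droite}X\times X\xleftarrow{\gauche 0,1_{X}\droite}X$ is jointly strongly epimorphic. Regarding its three objects as objects of $(\C\downarrow(X\times X))$ lying over the terminal object $(X\times X,1_{X\times X})$, this becomes a jointly strongly epimorphic pair in the slice $(\C\downarrow(X\times X))$ itself, since the forgetful functor $(\C\downarrow(X\times X))\to\C$ preserves and reflects monomorphisms, and hence jointly strongly epimorphic pairs; so nothing is lost in passing to the slice.

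Next I would compute the image of this pair. The pullback of $\gauche 1_{X},0\droite$ along $\gauche 1_{X},1_{X}\droite$ represents the pairs of morphisms $(u,v)$ with $\gauche u,0\droite=\gauche v,v\droite$; comparing components forces $v=0$ and then $u=v=0$, so this pullback is the zero object $0$, its structure map to the base $X$ being the zero morphism. By symmetry the other leg pulls back to $0$ as well, while the terminal object is sent to the terminal object $(X,1_{X})$ of $(\C\downarrow X)$. Thus, were $\gauche 1_{X},1_{X}\droite^{*}$ coherent, the resulting pair of morphisms from the initial object of $(\C\downarrow X)$ — the zero object equipped with its zero map to $X$ — to $(X,1_{X})$ would have to be jointly extremally epimorphic in $(\C\downarrow X)$.

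Finally I would read off the conclusion. Since the common domain is initial, both legs factor through every monomorphism into $(X,1_{X})$, so joint extremal epimorphicity would force every such monomorphism to be an isomorphism. As the subobjects of the terminal object $(X,1_{X})$ are exactly the monomorphisms $M\to X$ in $\C$, and $0\to X$ is always one of them, this makes $0\to X$ an isomorphism, i.e.\ $X$ a zero object. The crux — and the step most likely to need care — is the middle one: the collapse of the pullback of a product injection along the diagonal to the zero object, together with the bookkeeping that translates joint extremal epimorphicity in the slice into the absence of proper subobjects of $X$.
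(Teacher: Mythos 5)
Your proof is correct and follows essentially the same route as the paper's: pull back the unital cospan $(\gauche 1_{X},0\droite,\gauche 0,1_{X}\droite)$ along the diagonal, observe that both pullbacks collapse to the zero object, and conclude that $0\to X$ must be an isomorphism. The only difference is cosmetic --- you spell out the trivial converse (pullback along an isomorphism is an equivalence) and phrase the final step via subobjects of $(X,1_{X})$ rather than via $0\to X$ being a strong epimorphism and a monomorphism, which amounts to the same thing.
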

\begin{proof}
In the diagram
\[
\xymatrix{
0 \ar[r] \ar[d]& X \ar[d]^-{\gauche 1_{X},1_{X}\droite} & 
0 \ar[d] \ar[l]\\
X \ar[r]_-{\gauche 1_{X},0\droite} & X \times X & X \ar[l]^-{\gauche 0,1_{X}\droite}
}
\]
the two squares are pullbacks and $(\gauche 1_{X},0\droite, \gauche 0,1_{X}\droite)$ is a jointly strongly epimorphic cospan in $\C$, and hence in $(\C\downarrow (X\times X))$.
It follows that $0\to X$ is a strong epimorphism, so that $X$ is isomorphic to $0$. 
\end{proof}

\begin{proposition}\label{proposition: unital coherent trivial}
If a unital category is coherent, then it is trivial.
\end{proposition}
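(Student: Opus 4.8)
The plan is to reduce the statement directly to the Lemma just established. By Definition~\ref{Definition Coherent Category}, saying that a (unital, hence in particular pointed and finitely complete) category $\C$ is coherent means precisely that \emph{every} change-of-base functor $f^{*}\colon(\C\downarrow Y)\to(\C\downarrow X)$ of the basic fibration is coherent. The natural move is to feed this hypothesis a carefully chosen morphism $f$: namely, for a fixed but arbitrary object $X$, I would take $f$ to be the diagonal $\gauche 1_{X},1_{X}\droite\colon X\to X\times X$, which exists because $\C$ has finite products.

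For this particular choice, coherence of $\C$ tells us that the pullback functor $\gauche 1_{X},1_{X}\droite^{*}\colon(\C\downarrow(X\times X))\to(\C\downarrow X)$ is coherent. But this is exactly the functor analysed in Lemma~\ref{lemma: unital diagonal is not coherent}, whose conclusion---using the unitality of $\C$---is that such coherence forces $X$ to be a zero object. Since $X$ was arbitrary, every object of $\C$ is a zero object.

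It then remains only to observe that a pointed category in which every object is a zero object is trivial: any two objects are both isomorphic to $0$, so up to the (unique) canonical isomorphisms there is exactly one object and one morphism, and $\C$ is equivalent to the terminal category.

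I do not expect any genuine obstacle here, since all the real content has been isolated in Lemma~\ref{lemma: unital diagonal is not coherent}; the present argument is merely the observation that coherence of $\C$ applies to \emph{all} morphisms, in particular to the diagonals. The only point requiring a moment's care is the choice of $f$: one must select a morphism whose associated pullback functor can be coherent only in a degenerate situation, and in the unital setting the diagonal $\gauche 1_{X},1_{X}\droite$ is precisely such a morphism.
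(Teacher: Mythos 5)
Your proof is correct and coincides with the paper's own argument: the paper proves this proposition by the single remark that it ``follows trivially from Lemma~\ref{lemma: unital diagonal is not coherent}'', and your write-up simply makes explicit the same deduction---apply coherence to the diagonal $\gauche 1_{X},1_{X}\droite\colon X\to X\times X$, invoke the Lemma to conclude each $X$ is a zero object, and hence the category is trivial.
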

\begin{proof}
The proof follows trivially from Lemma~\ref{lemma: unital diagonal is not coherent}.
\end{proof}

However, we will see that in a unital category certain change-of-base functors are always coherent.

\begin{lemma}
Let $\C$ be a unital category.
If $(f,g)$ and $(f',g')$ are jointly strongly epimorphic cospans over $Z$ and $Z'$ respectively, then $(f\times f',g\times g')$ is a jointly strongly epimorphic cospan over $Z\times Z'$.
\end{lemma}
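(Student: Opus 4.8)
The plan is to verify the jointly \emph{extremally} epimorphic version of the statement, which is equivalent to the jointly strongly epimorphic one because $\C$ has pullbacks (Lemma~\ref{lemma: strong implies extremal}). So I would start from a monomorphism $m\colon M\to Z\times Z'$ through which both $f\times f'$ and $g\times g'$ factor, say $f\times f' = mu$ and $g\times g' = mv$, and aim to prove that $m$ is an isomorphism.

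The key idea is to test $m$ against the unital structure on the pair $(Z,Z')$: by unitality the cospan $(\gauche 1_{Z},0\droite\colon Z\to Z\times Z',\ \gauche 0,1_{Z'}\droite\colon Z'\to Z\times Z')$ is jointly strongly, hence jointly extremally, epimorphic. It therefore suffices to show that both canonical morphisms $\gauche 1_{Z},0\droite$ and $\gauche 0,1_{Z'}\droite$ factor through $m$; the extremal property of the unital cospan then forces $m$ to be an isomorphism. This reduces the problem to two independent sub-problems, one for each factor, and so decouples the two given cospans.

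For the first sub-problem I would use the elementary identities $(f\times f')\gauche 1_{X},0\droite = \gauche 1_{Z},0\droite f$ and $(g\times g')\gauche 1_{Y},0\droite = \gauche 1_{Z},0\droite g$, which hold because $f'$ and $g'$ preserve the zero morphism. Composing with the factorisations $f\times f'=mu$ and $g\times g'=mv$ shows that both $\gauche 1_{Z},0\droite f$ and $\gauche 1_{Z},0\droite g$ factor through $m$, via $u\gauche 1_{X},0\droite$ and $v\gauche 1_{Y},0\droite$ respectively. Since $(f,g)$ is jointly strongly epimorphic over $Z$, applying its defining property with the comparison map taken to be $\phi = \gauche 1_{Z},0\droite$ yields the required factorisation of $\gauche 1_{Z},0\droite$ through $m$. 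The symmetric argument, now using $(f\times f')\gauche 0,1_{X'}\droite = \gauche 0,1_{Z'}\droite f'$ (and its analogue for $g$) together with the joint strong epimorphicity of $(f',g')$ over $Z'$, produces the factorisation of $\gauche 0,1_{Z'}\droite$ through $m$.

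I expect the only real subtlety to be bookkeeping: checking that the various composites land where intended and that the maps fed into the universal property of each jointly strongly epimorphic cospan are the correct ones. There is no genuine obstacle once one hits upon the idea of probing $m$ with the unital cospan rather than attempting to build a section of $m$ directly; the three applications of the jointly strong/extremal epimorphic property---one for each factor, plus the final invocation of unitality---then assemble immediately.
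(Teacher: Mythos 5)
Your proof is correct and takes essentially the same route as the paper's: both probe the monomorphism $m$ with the unital cospan $(\gauche 1_{Z},0\droite,\gauche 0,1_{Z'}\droite)$, reduce the problem factor by factor to the two given cospans, and conclude by unitality. The only cosmetic difference is that the paper obtains the factorisations of the product inclusions through $m$ by pulling $m$ back along $\gauche 1_{Z},0\droite$ and $\gauche 0,1_{Z'}\droite$ and invoking extremal epimorphicity of $(f,g)$ and $(f',g')$ to make the resulting monomorphisms isomorphisms, whereas you apply the strong-epimorphicity property directly with comparison map $\phi=\gauche 1_{Z},0\droite$ (respectively $\gauche 0,1_{Z'}\droite$)---the same argument in different packaging.
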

\begin{proof}
Consider the diagram
\[
\resizebox{.7\textwidth}{!}{\xymatrix@!0@C=3em@R=2em{
& & & & & & T'\ar[ddd]^{n'}|(.75){\hole} \ar[ddl]\\
& & & & & &\\
& & & & & S\ar[ddd]_{m}|(.75){\hole}\\
& & X' \ar[rrrr]|(.25){\hole}_(0.4){f'}|(.62){\hole}|(.75){\hole} \ar[ddl]_{\gauche 0,1_{X'}\droite}\ar@/^2ex/[rrrruuu] & & & & Z'\ar[ddl]^(.4){\gauche 0,1_{Z'}\droite}|(.75){\hole} & & & & Y' \ar[llll]_(.33){g'}|(.69){\hole} \ar[ddl]^{\gauche 0,1_{Y'}\droite}\ar@/_3ex/[lllluuu]\\
& & & & T\ar[ddd]_(.66){n} \ar[uur] & & & & & &\\ 
& X\times X' \ar[rrrr]|(.25){\hole}_{f\times f'}|(.75){\hole} \ar@/^2ex/[rrrruuu] & & & & Z\times Z' & & & & Y\times Y' \ar[llll]_(.33){g\times g'}|(.67){\hole} \ar@/_3ex/[lllluuu]\\
& & & & & & & & &\\
X \ar[rrrr]_{f}\ar[uur]^{\gauche 1_{X},0\droite}\ar@/^2ex/[rrrruuu] & & & & Z\ar[uur]_{\gauche 1_{Z},0\droite} & & & & Y \ar[llll]^{g}\ar[uur]_{\gauche 1_{Y},0\droite} \ar@/_3ex/[lllluuu]}}
\]
where $m$ monomorphism of cospans and the monomorphisms of cospans $n$ and $n'$ are obtained by pullback.
Since $(f,g)$ and $(f',g')$ are jointly strongly epimorphic cospans it follows that $n$ and $n'$ are isomorphisms, respectively.
Therefore since $\C$ is unital it follows that $m$ is an isomorphism as required. 
\end{proof}

As an immediate corollary we obtain:

\begin{lemma}
\label{lemma: unital product with functor coherent}
Let $\C$ be a unital category.
For each object $X$ in $\C$ the functor ${X\times (-)}\colon \C \to \C$ is coherent, and hence so are the change-of-base functors ${\C \to (\C \downarrow X)}$ and ${\C \to \Pt_{X}(\C)}$ along $X\to 0$.\noproof 
\end{lemma}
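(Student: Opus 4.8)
The plan is to verify the two defining properties of a coherent functor (Definition~\ref{Definition Coherent Functor}) for the change-of-base functors along the unique morphism $\tau\colon X \to 0$: preservation of jointly strongly epimorphic pairs is exactly the preceding lemma once it is fed the right input, while preservation of finite limits is the general behaviour of reindexing along $\tau$. On objects, writing $\tau^{*}$ for either change-of-base functor, one has $\tau^{*}(A) = (X \times A, \pi_{X})$ under the canonical equivalences $(\C\downarrow 0) \simeq \C \simeq \Pt_{0}(\C)$ coming from $0$ being a zero object, which is why $\tau^{*}$ deserves to be called ``$X \times (-)$''.

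For the jointly strongly epimorphic pairs I would argue as follows. Let $(f,g)$ be a jointly strongly epimorphic cospan over an object $Z$. Apply the preceding lemma to the cospans $(1_{X},1_{X})$ over $X$ and $(f,g)$ over $Z$: the first is jointly strongly epimorphic because $1_{X}$ is a strong epimorphism, so the hypotheses hold, and the conclusion is that $(1_{X}\times f, 1_{X}\times g) = (X \times f, X \times g)$ is jointly strongly epimorphic over $X \times Z$. This is precisely the statement that $\tau^{*}$ sends jointly strongly epimorphic pairs to jointly strongly epimorphic pairs in $\C$.

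For the finite limits I would use that $\tau^{*}$ is computed by pullback along $\tau$: since finite limits in $(\C\downarrow X)$ and in $\Pt_{X}(\C)$ are built from finite limits of $\C$, and pullback functors preserve finite limits, $\tau^{*}$ preserves finite limits (for the slice case one may alternatively invoke that $\tau^{*}$ is right adjoint to composition with $\tau$). Combined with the previous paragraph, this yields coherence of both change-of-base functors.

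The one step that is not purely formal --- and the place I expect the only friction --- is the interface between the three formulations. The naive endofunctor $X \times (-)\colon \C \to \C$ does not preserve the terminal object, since $X \times 0 \cong X$, so its coherence must be read through the codomain $(\C\downarrow X)$ or $\Pt_{X}(\C)$, carrying the fibrewise finite limits. Dually, the preceding lemma only delivers a pair that is jointly strongly epimorphic in $\C$, whereas coherence of $\tau^{*}$ asks for one that is jointly strongly epimorphic in the fibre. I would close this gap by the routine remarks that a morphism of the fibre is monic exactly when its underlying $\C$-morphism is, and that any filler produced in $\C$ automatically commutes with the structural morphism to $X$; the same two remarks, read off the forgetful functor $\Pt_{X}(\C) \to \C$, settle the fibration-of-points case.
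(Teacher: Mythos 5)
Your proof is correct and follows the paper's intended route exactly: the paper states this lemma as an immediate corollary of the preceding lemma on products of jointly strongly epimorphic cospans in a unital category, which is precisely your specialisation to the cospans $(1_{X},1_{X})$ over $X$ and $(f,g)$ over $Z$. The additional details you supply---preservation of finite limits by change-of-base, and the routine transfer of the jointly strongly epimorphic property between $\C$ and the fibres $(\C\downarrow X)$ and $\Pt_{X}(\C)$---are exactly the verifications the paper leaves implicit under its \emph{``immediate corollary''}.
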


\section{Algebraically coherent categories}\label{(ACC)}
Considering that even the most basic algebraic categories are never coherent, it is natural to try and find an algebraic variant of the concept.
The idea followed in this paper is to replace the \emph{basic fibration} by the \emph{fibration of points}:

\begin{definition}\label{Definition Algebraically Coherent Category}
A category with finite limits is called \defn{algebraically coherent} if and only if for each morphism $f\colon{X\to Y}$ in~$\C$, the change-of-base functor
\[
f^{*}\colon {\Pt_{Y}(\C)\to \Pt_{X}(\C)}
\]
is coherent.
\end{definition}

This definition means that for each diagram
\[
\vcenter{\xymatrix@!0@R=3em@C=4em{
& A'' \skewpullback \ar[rr]^-{\overline u} \ar[dl]_-{g''} \ar@<.5ex>[ddd]|(.33){\hole}^(.66){p''} & & A \skewpullback \ar[dl]_-{g} \ar@<.5ex>[ddd]|(.33){\hole}^(.66){p} & & A' \skewpullback \ar[dl]_-{g'} \ar[ll]_-{\overline v} \ar@<.5ex>[ddd]^(.66){p'} \\
B'' \ar[rr]^(.66){u} \ar@<.5ex>[ddd]^-{q''} & & B \ar@<.5ex>[ddd]^{q} & & B' \ar[ll]_(.66){v} \ar@<.5ex>[ddd]^{q'} &\\
& & & & &\\
& X \ar@{=}[rr]|(.47){\hole}|(.53){\hole} \ar@<.5ex>[uuu]^(.35){s''}|(.67){\hole} \ar[dl]^(.4){f} & & X \ar@<.5ex>[uuu]^(.35){s}|(.67){\hole} \ar@{=}[rr]|(.47){\hole}|(.53){\hole} \ar[dl]^(.4){f} & & X \ar@<.5ex>[uuu]^(.35){s'} \ar[dl]^(.4){f}\\
Y \ar@<.5ex>[uuu]^-{t''} \ar@{=}[rr] & & Y \ar@<.5ex>[uuu]^-t \ar@{=}[rr] & & Y \ar@<.5ex>[uuu]^-{t'}}}
\]
where $(u,v)$ is a cospan in $\Pt_Y(\C)$ and $(\overline u,\overline v)$ is the cospan in $\Pt_X(\C)$ obtained by change-of-base along $f$, if $(u,v)$ is a jointly strongly epimorphic pair, then also the pair $(\overline u,\overline v)$ is jointly strongly epimorphic.
Note that we can interpret those conditions in $\C$ itself:

\begin{lemma}\label{Lemma Subobject}
Each jointly strongly epimorphic pair in a category of points $\Pt_X(\C)$ is still jointly strongly epimorphic when considered in $(\C\downarrow X)$ or even $\C$.
\end{lemma}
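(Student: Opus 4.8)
The plan is to prove the formally strongest of the three assertions, namely that the pair is jointly strongly epimorphic in $\C$ itself; the statement for $(\C\downarrow X)$ will then follow for free. Indeed, any test diagram in $(\C\downarrow X)$ is in particular a test diagram in $\C$, and if $(f,g)$ is jointly strongly epimorphic in $\C$ the resulting factorisation $\varphi$ automatically lies over $X$ (compose the structural morphism $P\to X$ with $m\varphi=\phi$). Moreover, since the forgetful functors $\Pt_X(\C)\to(\C\downarrow X)\to\C$ reflect monomorphisms, and a factorisation through a monomorphism is unique as soon as it exists, the only real content is the \emph{existence} of a diagonal against an arbitrary monomorphism of $\C$. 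Write the given cospan as $(f\colon A\to C,\ g\colon B\to C)$ in $\Pt_X(\C)$, with structural morphisms $\alpha$, $\beta$, $\gamma$ to $X$ and sections $\sigma_A$, $\sigma_B$, $\sigma_C$; being morphisms of points, $f$ and $g$ satisfy $\gamma f=\alpha$, $\gamma g=\beta$, $f\sigma_A=\sigma_C$ and $g\sigma_B=\sigma_C$.

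So suppose given a monomorphism $m\colon M\to P$ in $\C$ together with $\phi\colon C\to P$, $f'\colon A\to M$ and $g'\colon B\to M$ satisfying $mf'=\phi f$ and $mg'=\phi g$. The idea is to promote this to a test diagram in $\Pt_X(\C)$ by taking products with $X$. First one notes, using that $f$ and $g$ are morphisms of points and that $m$ is monic, that $\phi\sigma_C=\phi f\sigma_A=mf'\sigma_A$ and likewise $\phi\sigma_C=mg'\sigma_B$, whence $f'\sigma_A=g'\sigma_B$ and $\phi\sigma_C$ factors through $m$. This is exactly what is needed to make $X\times M$ and $X\times P$ into points over $X$, with first projections as structural morphisms and $\gauche 1_X, f'\sigma_A\droite$, respectively $\gauche 1_X, \phi\sigma_C\droite$, as sections. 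Then $1_X\times m\colon X\times M\to X\times P$ is a monomorphism of points, and $\gauche\gamma,\phi\droite\colon C\to X\times P$, $\gauche\alpha,f'\droite\colon A\to X\times M$, $\gauche\beta,g'\droite\colon B\to X\times M$ are readily checked to be morphisms of points assembling into a test diagram for the cospan $(f,g)$ in $\Pt_X(\C)$.

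Applying the hypothesis that $(f,g)$ is jointly strongly epimorphic in $\Pt_X(\C)$ produces a factorisation $\hat\varphi\colon C\to X\times M$ in $\Pt_X(\C)$ with $(1_X\times m)\hat\varphi=\gauche\gamma,\phi\droite$. Composing $\hat\varphi$ with the second projection $X\times M\to M$ yields $\varphi\colon C\to M$ in $\C$, and comparison of the second components of $(1_X\times m)\hat\varphi=\gauche\gamma,\phi\droite$ gives $m\varphi=\phi$. Uniqueness of $\varphi$ is immediate since $m$ is a monomorphism. Thus $(f,g)$ is jointly strongly epimorphic in $\C$, and hence also in $(\C\downarrow X)$.

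I expect the main obstacle to lie entirely in the construction of the auxiliary points, and in particular in the observation that $\phi\sigma_C$ factors through $m$: this is what both forces the section on $X\times M$ to exist and makes $1_X\times m$ a monomorphism of points, so that the arbitrary monomorphism of $\C$ can be replaced by a monomorphism in $\Pt_X(\C)$. Once the lifted test diagram is in place, invoking the hypothesis and projecting back is routine, with uniqueness coming for free from $m$ being monic.
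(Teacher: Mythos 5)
Your proof is correct, but it takes a different route from the paper's. The paper exploits the standing finite-limit assumption to replace ``jointly strongly epimorphic'' by ``jointly extremally epimorphic'' (Lemma~\ref{lemma: strong implies extremal}), so it only has to consider a subobject $m\colon M\to A$ of the \emph{codomain} of the cospan through which both legs factor; since that codomain is already a point over $X$, the object $M$ inherits a point structure for free---the composite of $m$ with the retraction of the codomain is split by $\overline{u}s''=\overline{v}s'$, these two composites agreeing because $m$ is monic---so the factorisation lives in $\Pt_X(\C)$ and the extremal condition there forces $m$ to be an isomorphism, giving a three-line proof. You instead verify the strong formulation directly, against an arbitrary monomorphism $m\colon M\to P$ and comparison $\phi\colon C\to P$; since $M$ and $P$ carry no point structure, you manufacture one by forming $X\times M$ and $X\times P$ with sections built from the given data ($f'\sigma_A=g'\sigma_B$ and $\phi\sigma_C$, the first identification again coming from monicity of $m$). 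Both arguments thus hinge on the same observation, that monicity of $m$ makes the two candidate sections coincide, but they trade off differently: the paper's extremal formulation makes the lift to $\Pt_X(\C)$ automatic at the cost of invoking the extremal/strong interchange (which needs pullbacks to return from extremal to strong), whereas your argument is longer and needs binary products and several routine verifications, but it establishes the strong version directly, with no detour through extremal epimorphisms. Under the paper's blanket hypothesis of finite limits, the two routes are equally valid.
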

\begin{proof}
Consider a jointly strongly epimorphic pair $(u,v)$ in $\Pt_X(\C)$ which factors through a subobject $m$ in $\C$.
\[
\xymatrix{
	& M \ar@{{ >}->}[d]^-m \\
	A'' \ar[ru]^-{\overline u} \ar[r]^-{u} \ar@<.5ex>[d]^-{p''} & A \ar@<.5ex>[d]^{p} & A' \ar[l]_-{v} \ar@<.5ex>[d]^-{p'} \ar[lu]_-{\overline v} \\
	X \ar@{=}[r] \ar@<.5ex>[u]^-{s''} & X \ar@<.5ex>[u]^-{s} & X \ar@{=}[l] \ar@<.5ex>[u]^-{s'}
}
\]
Then, clearly, $pm$ is split by $\overline{u}s''=\overline{v}s'\colon {X\to M}$, thus $m$, $\overline{u}$ and $\overline{v}$ become morphisms of points.
\end{proof}

\subsection{Stability properties.}
Next we will show that if a category is algebraically coherent, then so are its slice and coslice categories and so is any
full subcategory which is closed under products and subobjects.

\begin{proposition}\label{Prop Slice-coslice}
If a category $\C$ is algebraically coherent, then, for each $X$ in~$\C$, the categories $(\C\downarrow X)$ and $(X\downarrow\C)$ are also algebraically coherent.
\end{proposition}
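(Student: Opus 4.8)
The plan is to reduce the algebraic coherence of $(\C\downarrow X)$ and $(X\downarrow\C)$ to that of $\C$ itself, by observing that their fibrations of points are, fibrewise, copies of the fibration of points of $\C$, compatibly with change of base.

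Consider first the slice $(\C\downarrow X)$ and an object $(b\colon B\to X)$. A point over $(b)$ is a split epimorphism $(p\colon A\to B,\ s\colon B\to A)$ in $(\C\downarrow X)$; writing $a\colon A\to X$ for the structural map of its domain, the requirement that $p$ be a morphism of the slice forces $a=bp$, after which the section condition $as=b$ holds automatically. Hence such a point carries exactly the data of a point $(p,s)$ over $B$ in $\C$, and a direct check shows that morphisms correspond as well, yielding an isomorphism of categories $\Pt_{(b)}((\C\downarrow X))\cong\Pt_{B}(\C)$. Since the forgetful functor $(\C\downarrow X)\to\C$ creates pullbacks, these isomorphisms commute with change of base: for a morphism $f\colon(A',a)\to(B,b)$ of the slice the functor $f^{*}$ computed in $(\C\downarrow X)$ agrees, under the identifications, with $f^{*}$ computed in $\C$. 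The latter is coherent because $\C$ is algebraically coherent, so the former is coherent too, and $(\C\downarrow X)$ is algebraically coherent.

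For the coslice $(X\downarrow\C)$ I would argue symmetrically. Over an object $(y\colon X\to B)$, a point is a split epimorphism in $(X\downarrow\C)$, that is, a split epimorphism $(p\colon A\to B,\ s\colon B\to A)$ in $\C$ whose domain is equipped with a structural map $x\colon X\to A$; here the section condition forces $x=sy$, so the coslice structure is redundant and again $\Pt_{(y)}((X\downarrow\C))\cong\Pt_{B}(\C)$. As the forgetful functor $(X\downarrow\C)\to\C$ creates connected limits, in particular pullbacks, the same compatibility with change of base holds, and coherence of $f^{*}$ in $\C$ transfers to $(X\downarrow\C)$.

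The verifications that these fibrewise correspondences are isomorphisms of categories are routine. The step requiring most care, and the main obstacle, is the compatibility with change of base in the coslice case: there the structural map $x=sy$ of the domain of a point is induced rather than given, so one must confirm that it coincides with the structural map produced by the pullback taken in $(X\downarrow\C)$. Once this is secured the conclusion is immediate, since coherence of a functor is invariant under pre- and post-composition with equivalences, and equivalences preserve and reflect both finite limits and jointly strongly epimorphic pairs.
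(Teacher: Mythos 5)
Your proof is correct and follows essentially the same route as the paper's: both establish isomorphisms of categories $\Pt_{(B,b)}(\C\downarrow X)\cong\Pt_{B}(\C)$ (and likewise for the coslice) commuting with the change-of-base functors, so that coherence transfers. You merely spell out the details---including the coslice case and the compatibility of the identifications with pullbacks---which the paper compresses into the assertion that such isomorphisms exist and that ``a similar argument holds'' for $(X\downarrow\C)$.
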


\begin{proof}
For each morphism in the slice category $(\C \downarrow X)$, i.e.~a commutative diagram
$$ \xymatrix{
	Y \ar[dr]_\alpha \ar[rr]^f & & Z \ar[dl]^\beta \\
	& X
} $$
in $\C$, there are isomorphisms of categories (the horizontal arrows below) which make the diagram
$$ \xymatrix{
	\Pt_{(Z,\beta)}(\C\downarrow X) \ar[r]^-\cong \ar[d]_{(f \downarrow X)^{*}} & \Pt_{Z}(\C) \ar[d]^{f^*} \\
	\Pt_{(Y,\alpha)}(\C\downarrow X) \ar[r]_-\cong & \Pt_{Y}(\C)
} $$
commute.
It follows that $(f\downarrow X)^{*}$ is coherent whenever $f^*$ is.
A similar argument holds for the coslice category $(X \downarrow \C)$.
\end{proof}

\begin{corollary}\label{Corollary Fibres}
If a category $\C$ is algebraically coherent, then any fibre $\Pt_{X}(\C)$ is also algebraically coherent.
\end{corollary}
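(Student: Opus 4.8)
The plan is to exhibit the fibre $\Pt_X(\C)$ as a coslice category of the slice category $(\C \downarrow X)$, and then to apply Proposition~\ref{Prop Slice-coslice} twice: once to pass from $\C$ to $(\C \downarrow X)$, and once more to pass from $(\C \downarrow X)$ to the coslice in question.

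First I would identify the relevant structure. Since $\C$ has finite limits, so does $(\C \downarrow X)$, and its terminal object is the point $(X, 1_X)$. I claim there is an isomorphism of categories
\[
\Pt_X(\C) \cong \bigl((X,1_X) \downarrow (\C \downarrow X)\bigr),
\]
the coslice of $(\C \downarrow X)$ under its terminal object. Indeed, an object of the right-hand side is a morphism $s \colon (X, 1_X) \to (A, p)$ in $(\C \downarrow X)$, which is precisely a morphism $s \colon X \to A$ in $\C$ satisfying $ps = 1_X$, that is, a point $(p \colon A \to X, s \colon X \to A)$ over $X$. A morphism in the coslice from $s$ to $s'$ is a morphism $h \colon (A, p) \to (A', p')$ of $(\C \downarrow X)$ with $hs = s'$; unwinding, this is exactly a map $h \colon A \to A'$ with $p'h = p$ and $hs = s'$, i.e.\ a morphism of points. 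This correspondence is functorial and bijective on both objects and morphisms, hence an isomorphism of categories.

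Granting this, the conclusion follows formally. Since $\C$ is algebraically coherent, the slice $(\C \downarrow X)$ is algebraically coherent by the slice part of Proposition~\ref{Prop Slice-coslice}. Applying the coslice part of the same proposition to the (finitely complete, algebraically coherent) category $(\C \downarrow X)$ and its object $(X, 1_X)$, the coslice $\bigl((X,1_X) \downarrow (\C \downarrow X)\bigr)$ is algebraically coherent. Through the isomorphism above, $\Pt_X(\C)$ is therefore algebraically coherent.

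I expect the only real content to be the identification of $\Pt_X(\C)$ with the coslice under the terminal object of $(\C \downarrow X)$; once this structural observation is in place, the proof is purely formal and reduces to a double application of the already-established stability under slices and coslices. The one subtlety to keep in mind is that Proposition~\ref{Prop Slice-coslice} is applied with $(\C \downarrow X)$ playing the role of the ambient algebraically coherent category, which is legitimate precisely because slices of finitely complete categories are again finitely complete.
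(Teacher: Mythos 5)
Your proof is correct and is essentially identical to the paper's own argument: the paper also observes that $\Pt_X(\C)=\bigl((X,1_X)\downarrow(\C\downarrow X)\bigr)$ and then invokes Proposition~\ref{Prop Slice-coslice}. You have simply spelled out the isomorphism of categories in more detail, which is a reasonable thing to make explicit.
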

\begin{proof}
Since $\Pt_X(\C)=((X,1_X)\downarrow(\C\downarrow X))$, this follows from Proposition~\ref{Prop Slice-coslice}.
\end{proof}

\begin{proposition}\label{Proposition Functor Categories}
If $\C$ is algebraically coherent, then so is any category of diagrams in $\C$. In particular, such is the category $\Pt(\C)$ of points in $\C$. 
\end{proposition}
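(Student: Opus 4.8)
The plan is to realise every category of diagrams in $\C$ as a functor category $[\mathcal{J},\C]$ for a small category $\mathcal{J}$, and to observe that the category of points is the special case $\Pt(\C)=[\mathcal{J},\C]$ where $\mathcal{J}$ is the small category with two objects $0$, $1$ and arrows $p\colon 0\to 1$, $s\colon 1\to 0$ subject to the single relation $ps=1_{1}$ (so $\mathcal{J}$ has finite hom-sets). Since finite limits in $[\mathcal{J},\C]$ are computed objectwise, $[\mathcal{J},\C]$ is finitely complete and it makes sense to ask for algebraic coherence. The guiding idea is that finite limits, monomorphisms, and change-of-base functors of the fibration of points in $[\mathcal{J},\C]$ are all computed objectwise, so that coherence of a change-of-base functor in $[\mathcal{J},\C]$ ought to follow from algebraic coherence of $\C$ applied at each object of $\mathcal{J}$.

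First I would make the objectwise description precise. A point over $Y\colon\mathcal{J}\to\C$ is the same as a functor $\mathcal{J}\to\Pt(\C)$ lying over $Y$, and for a natural transformation $f\colon X\to Y$ the change-of-base functor $f^{*}\colon\Pt_{Y}([\mathcal{J},\C])\to\Pt_{X}([\mathcal{J},\C])$ is given objectwise by the functors $(f_{j})^{*}\colon\Pt_{Y_{j}}(\C)\to\Pt_{X_{j}}(\C)$, because the defining pullbacks are computed at each object $j$ of $\mathcal{J}$. As the change-of-base functors of a fibration of points preserve finite limits (being computed by pullback, which commutes with finite limits), and finite limits in the fibres $\Pt_{\bullet}([\mathcal{J},\C])$ are objectwise, $f^{*}$ preserves finite limits. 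So the whole problem reduces to showing that $f^{*}$ preserves jointly strongly epimorphic pairs.

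The heart of the argument is a lemma stating that a pair $(u,v)$ in a fibre $\Pt_{Z}([\mathcal{J},\C])$ is jointly strongly epimorphic if and only if each component pair $(u_{j},v_{j})$ is jointly strongly epimorphic in $\Pt_{Z_{j}}(\C)$; here I use that monomorphisms in $\Pt_{Z}([\mathcal{J},\C])$ are detected objectwise, so that (having finite limits) I may work throughout with jointly extremally epimorphic pairs. The easy direction, objectwise implies global, I would prove as for strong epimorphisms: given a factorisation through a global monomorphism, the objectwise unique liftings are forced to be natural by the uniqueness clause, and they assemble into a morphism of points. The hard direction is the converse, and this is where I expect the main obstacle. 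Given a monomorphism $m_{0}\colon M_{0}\to C_{j_{0}}$ in $\Pt_{Z_{j_{0}}}(\C)$ through which $u_{j_{0}}$ and $v_{j_{0}}$ factor, I would build a subobject $\widetilde{M}\hookrightarrow C$ in $\Pt_{Z}([\mathcal{J},\C])$ by setting $\widetilde{M}_{j}\DefEq\meet_{\alpha\colon j\to j_{0}}C(\alpha)^{*}(M_{0})$, the intersection of the pullbacks of $M_{0}$ along all arrows $j\to j_{0}$. Using composition in $\mathcal{J}$ one checks this is a subfunctor of $C$, and using naturality of the chosen section of $C$ one checks each $\widetilde{M}_{j}$ is a sub-point over $Z_{j}$; moreover $\widetilde{M}_{j_{0}}\leq M_{0}$ (take $\alpha=1_{j_{0}}$), and—crucially by naturality of $u$ and $v$, which gives $C(\alpha)u_{j}=u_{j_{0}}A(\alpha)$—both $u$ and $v$ factor through $\widetilde{M}$. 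Global joint extremal epimorphy then forces $\widetilde{M}\to C$ to be an isomorphism, so $\widetilde{M}_{j_{0}}\to C_{j_{0}}$ is an isomorphism factoring through $m_{0}$; being a monomorphism as well, $m_{0}$ is an isomorphism, as required.

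Finally I would assemble the pieces: if $(u,v)$ is jointly strongly epimorphic in $\Pt_{Y}([\mathcal{J},\C])$, the lemma gives objectwise joint strong epimorphy in each $\Pt_{Y_{j}}(\C)$; algebraic coherence of $\C$ makes each $(f_{j})^{*}$ coherent, so $((f_{j})^{*}u_{j},(f_{j})^{*}v_{j})$ is jointly strongly epimorphic in $\Pt_{X_{j}}(\C)$; and since $f^{*}$ is objectwise $(f_{j})^{*}$, the easy direction of the lemma returns that $(f^{*}u,f^{*}v)$ is jointly strongly epimorphic in $\Pt_{X}([\mathcal{J},\C])$, whence $f^{*}$ is coherent. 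The main obstacle is the subobject construction in the converse direction of the lemma. I would also flag that the intersection defining $\widetilde{M}_{j}$ is finite precisely when the hom-sets $\mathcal{J}(j,j_{0})$ are finite—which holds for $\Pt(\C)$—so for a general small $\mathcal{J}$ one should either assume these wide intersections exist in $\C$ or restrict to diagram categories with finite hom-sets.
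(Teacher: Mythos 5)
Your proof is correct, and its overall strategy---reduce everything to objectwise data---is the same one the paper gestures at; but the paper's entire proof is the single sentence that limits and colimits in a functor category are pointwise, so that the claim is ``obvious''. Everything of substance in your argument is what that sentence leaves implicit. Specifically, the non-trivial point you isolate is exactly right: the change-of-base functors of $[\mathcal{J},\C]$ are computed objectwise and preserve finite limits for free, so the whole question is whether jointly strongly (equivalently, since finite limits exist, jointly extremally) epimorphic pairs in a fibre $\Pt_{Z}([\mathcal{J},\C])$ are \emph{detected} objectwise, not merely preserved. Your construction of the subfunctor $\widetilde{M}$ with $\widetilde{M}_{j}=\bigwedge_{\alpha\in\mathcal{J}(j,j_{0})}C(\alpha)^{*}(M_{0})$ does settle this: it is a subfunctor because preimage commutes with intersection and precomposition with $\beta\colon j\to j'$ carries $\mathcal{J}(j',j_{0})$ into $\mathcal{J}(j,j_{0})$; it is a sub-point because the section of $C$ is natural and factors through $M_{0}$ at $j_{0}$; both $u$ and $v$ land in it by naturality; and then joint extremal epimorphy forces $\widetilde{M}\cong C$, so that $m_{0}$ is a monomorphism with a right inverse, hence an isomorphism. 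Your closing caveat is also well taken, and is a point on which your treatment is more careful than the paper's: the intersection defining $\widetilde{M}_{j}$ is finite only when $\mathcal{J}(j,j_{0})$ is finite, so for an arbitrary small index category one must either assume this or assume that $\C$ admits wide intersections of subobjects (automatic when $\C$ is complete and well-powered); the blanket phrase ``any category of diagrams'' silently relies on this. Since the index categories actually used later in the paper---for $\Pt(\C)$, for reflexive graphs in Proposition~\ref{Proposition XMod}, and for arrows---all have finite hom-sets, none of the paper's applications is affected.
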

\begin{proof}
Since in a functor category, limits and colimits are pointwise, the passage to categories of diagrams in~$\C$ is obvious.
\end{proof}

\begin{proposition}\label{Proposition Reflective Subcat}
If $\B$ is a full subcategory of an algebraically coherent category~$\C$ closed under finite products and subobjects (and hence all finite limits), then $\B$ is algebraically coherent.
In particular, any (regular epi)-reflective subcategory of an algebraically coherent category is algebraically coherent.
\end{proposition}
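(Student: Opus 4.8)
The plan is to realise each fibre $\Pt_{Y}(\B)$, for $Y\in\B$, as a full subcategory of $\Pt_{Y}(\C)$ and to transport the coherence of $f^{*}$ from $\C$ to $\B$ along this inclusion. First I would observe that closure under finite products and equalisers—the latter being subobjects—makes $\B$ closed under all finite limits, so that the inclusion $\B\hookrightarrow\C$ preserves and reflects them. A point over $Y$ in $\B$ is precisely a point over $Y$ in $\C$ all of whose data lie in $\B$; hence $\Pt_{Y}(\B)$ is the full subcategory of $\Pt_{Y}(\C)$ on such points, and it is again closed under finite limits. Since the change-of-base functor is computed by pulling back, and pullbacks in $\B$ agree with those in $\C$, the functor $f^{*}\colon\Pt_{Y}(\B)\to\Pt_{X}(\B)$ is simply the restriction of $f^{*}\colon\Pt_{Y}(\C)\to\Pt_{X}(\C)$; preservation of finite limits is then immediate.

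It remains to show that this restriction preserves jointly strongly epimorphic pairs. Because both $\Pt_{Y}(\B)$ and $\Pt_{Y}(\C)$ have pullbacks, Lemma~\ref{lemma: strong implies extremal} lets me replace ``jointly strongly epimorphic'' by ``jointly extremally epimorphic'' throughout, which is advantageous: the latter is tested against subobjects $m\colon M\to Z$ of the codomain $Z$ itself. The key claim is then that a cospan over an object $Z$ of $\Pt_{Y}(\B)$ is jointly extremally epimorphic in $\Pt_{Y}(\B)$ if and only if it is so in $\Pt_{Y}(\C)$. Granting this, the result follows by the chain: a jointly strongly epimorphic cospan in $\Pt_{Y}(\B)$ is jointly extremally epimorphic there, hence also in $\Pt_{Y}(\C)$; algebraic coherence of $\C$, together with Lemma~\ref{lemma: strong implies extremal}, makes its image under $f^{*}$ jointly extremally epimorphic in $\Pt_{X}(\C)$; and the claim, applied over $X$ to the cospan whose codomain $f^{*}Z$ again lies in $\B$, returns it to $\Pt_{X}(\B)$, where Lemma~\ref{lemma: strong implies extremal} upgrades it back to jointly strongly epimorphic.

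To prove the claim I would use that $\Pt_{Y}(\B)$ is closed under subobjects inside $\Pt_{Y}(\C)$: a subobject of a point is, on underlying objects, a subobject in $\C$, which lies in $\B$ by hypothesis, and its projection and section are then automatically morphisms of $\B$. Moreover a morphism of points is monic precisely when its underlying morphism is, so monomorphisms—and hence isomorphisms—in $\Pt_{Y}(\B)$ and in $\Pt_{Y}(\C)$ coincide on the common objects. Thus, given $Z\in\Pt_{Y}(\B)$, every subobject $m\colon M\to Z$ in $\Pt_{Y}(\C)$ already lives in $\Pt_{Y}(\B)$, and ``$m$ is an isomorphism'' means the same in both categories, so the two extremal-epimorphicity tests have identical inputs and outputs. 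I expect this bookkeeping—matching monomorphisms, subobjects, and the factorisations witnessing extremal epimorphicity across the full inclusion—to be the only delicate point, everything else being formal. Finally, the ``in particular'' clause follows because a (regular epi)-reflective full subcategory is automatically closed under subobjects and products, so that the first part of the statement applies.
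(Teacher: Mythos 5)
Your proposal is correct and follows essentially the same route as the paper: the change-of-base functor for $\B$ is the restriction of the one for $\C$, and joint strong epimorphicity of cospans agrees in $\Pt_{Y}(\B)$ and $\Pt_{Y}(\C)$ because $\B$ (hence $\Pt_{Y}(\B)$) is closed under subobjects. Your detour through jointly \emph{extremally} epimorphic pairs and the explicit matching of monomorphisms across the full inclusion merely spells out the details that the paper's terse proof leaves implicit.
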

\begin{proof}
We have to show that, for each morphism $f \colon X \to Y$ in \B, the change-of-base functor $f^*\colon{\Pt_{Y}(\B)\to \Pt_{X}(\B)}$ is coherent.
Since the category $\B$ is closed under finite limits in~$\C$ this functor is a restriction of the change-of-base functor $f^*\colon{\Pt_{Y}(\C)\to \Pt_{X}(\C)}$. It therefore suffices to note that cospans in~$\B$ are jointly strongly epimorphic in~$\B$ if and only if they are in~$\C$.
However since $\B$ is closed under subobjects in $\C$, this is indeed the case. 
\end{proof}

\subsection{The protomodular case.}
We recall that a category $\C$ is called \defn{protomodular} in the sense of Bourn~\cite{Bourn1991} if it has pullbacks of split epimorphisms along any map and all the change-of-base functors of the fibration of points $\cod\colon \Pt(\C)\to \C$ are \defn{conservative}, which means that they reflect isomorphisms. See also~\cite{Borceux-Bourn} for a detailed account of this notion.

It is an obvious consequence of Lemma~\ref{lemma: composite of strong/extremal with jointly strong/extremal} that any change-of-base functor along a pullback-stable strong epimorphism (and in particular along regular epimorphisms in a regular category) \emph{reflects} jointly strongly epimorphic pairs (see also Lemma~\ref{lemma: pulling back along pullback-stable extremal}~(c) below). We now explore the protomodular case, where \emph{all} change-of-base functors reflect jointly strongly epimorphic pairs.
Using this result we will prove that when $\C$ is a pointed protomodular category, algebraic coherence can be expressed in terms of kernel functors $\Ker\colon \Pt_{X}(\C)\to \C$ (which are precisely the change-of-base functors along initial maps $!_{X}\colon{0\to X}$) alone. 

\begin{lemma}\label{Lemma Reg Protomodular}
Let $F\colon \C\to \D$ be a functor. If $F$ is conservative and preserves monomorphisms then it reflects jointly strongly epimorphic pairs.
\end{lemma}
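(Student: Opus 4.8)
The plan is to verify the defining property of a jointly strongly epimorphic pair for $(f,g)$ directly, working inside $\C$ and applying $F$ only at the very end. So suppose we are given a commutative diagram as on the right of Figure~\ref{jeep}, with $m\colon M\to P$ a monomorphism, $\phi\colon Z\to P$, and $f'\colon X\to M$, $g'\colon Y\to M$ satisfying $mf'=\phi f$ and $mg'=\phi g$; we must produce a (necessarily unique, since $m$ is mono) morphism $\varphi\colon Z\to M$ with $m\varphi=\phi$. The key idea is that such a $\varphi$ exists precisely when $\phi$ factors through $m$, and the obstruction to this can be measured by a single subobject of $Z$.

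To build that subobject, I would form in $\C$ the pullback
\[
\xymatrix@R=3ex{
Q \ar[r]^-{\pi_M} \ar[d]_-{\pi_Z} & M \ar[d]^-{m}\\
Z \ar[r]_-{\phi} & P,
}
\]
available since $\C$ has finite limits. As a pullback of the monomorphism $m$, the projection $\pi_Z\colon Q\to Z$ is again a monomorphism. The pairs $(f,f')$ and $(g,g')$ satisfy the compatibility required by the universal property of $Q$, so they induce $\overline{f}\colon X\to Q$ and $\overline{g}\colon Y\to Q$ with $\pi_Z\overline{f}=f$ and $\pi_Z\overline{g}=g$; thus the cospan $(f,g)$ factors through the subobject $\pi_Z$. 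It then suffices to prove that $\pi_Z$ is an isomorphism, for then $\varphi=\pi_M\pi_Z^{-1}$ satisfies $m\varphi=m\pi_M\pi_Z^{-1}=\phi\pi_Z\pi_Z^{-1}=\phi$, as wanted.

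To see that $\pi_Z$ is an isomorphism I would apply $F$ and invoke conservativity. Since $F$ preserves monomorphisms, $F(\pi_Z)$ is a monomorphism, and applying $F$ to the two factorisations gives $F(f)=F(\pi_Z)F(\overline{f})$ and $F(g)=F(\pi_Z)F(\overline{g})$, so the cospan $(F(f),F(g))$ factors through the monomorphism $F(\pi_Z)$ over $F(Z)$. By hypothesis $(F(f),F(g))$ is jointly strongly epimorphic, hence jointly extremally epimorphic by Lemma~\ref{lemma: strong implies extremal}; applying that property to the factorisation just obtained forces $F(\pi_Z)$ to be an isomorphism. As $F$ is conservative, $\pi_Z$ is an isomorphism in $\C$, which completes the argument.

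The only real obstacle I expect is the temptation to argue directly in $\D$: joint strong epimorphy of $(F(f),F(g))$ does yield a factorisation $\psi\colon F(Z)\to F(M)$ of $F(\phi)$ through $F(m)$, but there is no reason for $\psi$ to lie in the image of $F$, since $F$ is assumed neither to preserve limits nor to be full---only to preserve monomorphisms and reflect isomorphisms. Forming the pullback $Q$ \emph{in $\C$} first, and transporting it to $\D$ merely to detect that a monomorphism has become invertible, is exactly what sidesteps this difficulty; it is also the point at which each of the two hypotheses is used precisely once.
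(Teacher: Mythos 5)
Your proof is correct, and it runs on the same engine as the paper's: a monomorphism through which the cospan factors is sent by $F$ to a monomorphism through which the image cospan factors, joint strong (hence extremal) epimorphy in $\D$ forces that monomorphism to be invertible, and conservativity transports the invertibility back to $\C$. The difference is which formulation gets tested. The paper checks the \emph{extremal} formulation: any monomorphism of cospans with codomain $(u,v)$ becomes, under $F$, a monomorphism of cospans over $(F(u),F(v))$, hence an isomorphism, hence already an isomorphism in $\C$; this argument needs no limits at all, and the upgrade from ``extremally'' to ``strongly'' is absorbed into the paper's standing convention (Lemma~\ref{lemma: strong implies extremal}) that the two notions coincide in categories with pullbacks. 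You instead check the \emph{strong} formulation head-on, pulling the test monomorphism $m$ back along $\phi$ to obtain the monomorphism $\pi_Z$ over $Z$, applying the same $F$-argument to $\pi_Z$, and reading off the factorisation $\varphi=\pi_M\pi_Z^{-1}$; in effect you have inlined the proof of the second assertion of Lemma~\ref{lemma: strong implies extremal} into the lemma itself. What your version buys is an explicit, self-contained construction of the required factorisation and a precise accounting of where pullbacks in $\C$ enter; what it costs is that it genuinely needs those pullbacks, whereas the paper's core argument reflects joint extremal epimorphy for an arbitrary functor between arbitrary categories. Under the paper's blanket finite-limit assumption the two proofs are interchangeable, and your closing observation---that one cannot simply lift a factorisation $\psi\colon F(Z)\to F(M)$ from $\D$ because $F$ is neither full nor limit-preserving---correctly identifies the trap that both proofs are designed to avoid.
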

\begin{proof}
Suppose that $(u,v)$ is a cospan in $\C$ such that $(F(u),F(v))$ is a jointly strongly epimorphic pair in $\D$.
This means that the image through $F$ of any monomorphism of cospans with codomain $(u,v)$ in $\C$ is an isomorphism.
The proof now follows from the fact that $F$ reflects isomorphisms.
\end{proof}

\begin{proposition}\label{Proposition Regular Protomodular}
If $\C$ is a protomodular category, then the change-of-base functors reflect jointly strongly epimorphic pairs.\noproof
\end{proposition}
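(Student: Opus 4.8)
The plan is to deduce the statement directly from Lemma~\ref{Lemma Reg Protomodular}. That lemma tells us that a functor which is conservative and preserves monomorphisms automatically reflects jointly strongly epimorphic pairs, so it suffices to verify that each change-of-base functor $f^{*}\colon \Pt_{Y}(\C)\to\Pt_{X}(\C)$ of the fibration of points enjoys both of these properties.

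Conservativity comes for free: the functors $f^{*}$ are exactly the change-of-base functors of $\cod\colon\Pt(\C)\to\C$, and by definition a protomodular category is precisely one in which all of these reflect isomorphisms. So the first hypothesis of Lemma~\ref{Lemma Reg Protomodular} is nothing but protomodularity.

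It remains to see that every such $f^{*}$ preserves monomorphisms. Here I would use that reindexing in the fibration of points is computed by pulling back along $f$, and that such pullback functors preserve all finite limits present in the fibres; this is exactly why, in Definition~\ref{Definition Coherent Functor}, the finite-limit clause is automatic for the functors $f^{*}$ and only the preservation of jointly strongly epimorphic pairs carries genuine content. Since all categories under consideration are finitely complete, each fibre $\Pt_{Y}(\C)$ has finite limits, and a morphism $m$ is a monomorphism precisely when the square on $m$ with two identity edges is a pullback (equivalently, when its kernel pair is discrete). A finite-limit-preserving functor therefore sends monomorphisms to monomorphisms, so $f^{*}$ preserves monos.

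With both hypotheses of Lemma~\ref{Lemma Reg Protomodular} in place, every $f^{*}$ reflects jointly strongly epimorphic pairs, which is the assertion. I do not anticipate a real obstacle: the only step demanding any attention is the preservation of monomorphisms, and this is wholly subsumed by the standard fact that reindexing along $f$ in the fibration of points preserves finite limits, so that the substantive work has already been discharged in Lemma~\ref{Lemma Reg Protomodular}.
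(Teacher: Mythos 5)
Your proof is correct and is exactly the argument the paper intends: the proposition is stated with no proof precisely because it follows immediately from Lemma~\ref{Lemma Reg Protomodular}, since protomodularity is by definition conservativity of the change-of-base functors, and these functors preserve monomorphisms because, being given by pullback, they preserve finite limits (and hence kernel pairs, which detect monos). Nothing to add.
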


\begin{lemma}\label{lemma: reflect and preserve}
Let $F\colon \C\to \D$ and $G\colon \D\to \E$ be functors.
If $GF$ and $G$ preserve and reflect, respectively, jointly strongly epimorphic pairs, then $F$ preserves jointly strongly epimorphic pairs. 
\end{lemma}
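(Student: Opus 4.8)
The plan is simply to unwind the definitions of ``preserve'' and ``reflect'' and then chain the two hypotheses; no diagram chasing is needed. I would fix an arbitrary jointly strongly epimorphic pair $(u,v)$ in $\C$ and trace its images, first under $F$ and then under $G$.

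First I would invoke the hypothesis on the composite. Since $GF$ preserves jointly strongly epimorphic pairs, applying it to $(u,v)$ shows that the pair $(GF(u),GF(v))$ is jointly strongly epimorphic in $\E$.

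Next, writing $GF(u)=G(F(u))$ and $GF(v)=G(F(v))$, this conclusion reads precisely as: the $G$-image of the pair $(F(u),F(v))$, which lives in $\D$, is jointly strongly epimorphic in $\E$. The reflection hypothesis on $G$ now applies verbatim to $(F(u),F(v))$, and yields that $(F(u),F(v))$ is itself jointly strongly epimorphic in $\D$. As $(u,v)$ was an arbitrary jointly strongly epimorphic pair in $\C$, this establishes that $F$ preserves jointly strongly epimorphic pairs.

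I do not expect any genuine obstacle here: the statement is a purely formal cancellation-type consequence of the definitions of ``preserve'' and ``reflect'' together with the associativity of functor composition, entirely analogous to the elementary fact that if $GF$ preserves epimorphisms and $G$ reflects them, then $F$ preserves them. The only point requiring a little care is the direction of the two implications: one must apply the reflection clause to the pair $(F(u),F(v))$ in $\D$, not to $(u,v)$ in $\C$, and one must be sure to read $GF$ as the honest composite $G\circ F$ so that the intermediate pair $(F(u),F(v))$ is available as the object to which reflection is applied.
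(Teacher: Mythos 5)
Your argument is correct and is exactly the paper's own proof: apply the preservation hypothesis for $GF$ to get that $(GF(u),GF(v))$ is jointly strongly epimorphic, then apply the reflection hypothesis for $G$ to the pair $(F(u),F(v))$. No gaps; the care you note about applying reflection to $(F(u),F(v))$ in $\D$ rather than to $(u,v)$ is precisely the only content of the lemma.
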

\begin{proof}
Let $(u,v)$ be a jointly strongly epimorphic cospan.
By assumption, it follows that $(GF(u),GF(v))$ and hence $(F(u), F(v))$ is a jointly strongly epimorphic cospan.
\end{proof}

\begin{proposition}\label{Proposition Characterisation kernel functor}
A protomodular category $\C$ with an initial object is algebraically coherent if and only if
the change-of-base functors along each morphism from the initial object are coherent.
In particular a pointed protomodular category is algebraically coherent if and only if the kernel functors $\Ker\colon \Pt_{X}(\C)\to \C$ are coherent.
\end{proposition}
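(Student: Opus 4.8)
The plan is to reduce coherence of an arbitrary change-of-base functor to coherence of the functors along maps out of the initial object, the bridge between the two being protomodularity as exploited through Lemma~\ref{lemma: reflect and preserve}.

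First I would record that coherence of a change-of-base functor $f^{*}\colon \Pt_{Y}(\C)\to\Pt_{X}(\C)$ amounts precisely to preservation of jointly strongly epimorphic pairs. Indeed, being a pullback functor, $f^{*}$ automatically preserves finite limits: finite limits in each fibre $\Pt_{Y}(\C)$ are computed as in the slice $(\C\downarrow Y)$, and the pullback functor preserves them. With this observation the forward implication is immediate, since if every $f^{*}$ is coherent then so are, in particular, those along the maps $!_{X}\colon 0\to X$ from the initial object.

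For the converse I would fix a morphism $f\colon X\to Y$. Since $0$ is initial, the triangle $!_{Y}=f\circ !_{X}$ commutes, so pseudo-functoriality of change-of-base yields a canonical natural isomorphism $(!_{Y})^{*}\cong (!_{X})^{*}\circ f^{*}$ between functors $\Pt_{Y}(\C)\to\Pt_{0}(\C)$. I would then apply Lemma~\ref{lemma: reflect and preserve} with $F=f^{*}$ and $G=(!_{X})^{*}$, so that $GF\cong (!_{Y})^{*}$. By hypothesis $(!_{Y})^{*}$ is coherent, hence $GF$ preserves jointly strongly epimorphic pairs; and by Proposition~\ref{Proposition Regular Protomodular} every change-of-base functor of a protomodular category reflects jointly strongly epimorphic pairs, so $G=(!_{X})^{*}$ does. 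The lemma then gives that $F=f^{*}$ preserves jointly strongly epimorphic pairs, and combined with the automatic preservation of finite limits this shows $f^{*}$ is coherent; as $f$ was arbitrary, $\C$ is algebraically coherent. The pointed case is then just a matter of identifying the change-of-base functor along $0\to X$ with the kernel functor $\Ker\colon\Pt_{X}(\C)\to\C$, since pulling a point back along the zero map computes its kernel.

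The main obstacle is the converse implication, and its crux is the appearance of protomodularity exactly where it is needed: it is what guarantees that $(!_{X})^{*}$ reflects jointly strongly epimorphic pairs, which is precisely the hypothesis of Lemma~\ref{lemma: reflect and preserve} that transfers the good behaviour of the ``initial'' functors $(!_{Y})^{*}$ down to an arbitrary $f^{*}$. I would take some care to note that $(!_{Y})^{*}\cong (!_{X})^{*}\circ f^{*}$ holds only up to canonical natural isomorphism, but this is harmless since preservation and reflection of jointly strongly epimorphic pairs are invariant under natural isomorphism.
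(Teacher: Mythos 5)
Your proof is correct and follows essentially the same route as the paper: the authors likewise apply Lemma~\ref{lemma: reflect and preserve} to the triangle ${!_Y^*} \cong {!_X^*}\circ f^*$, using Proposition~\ref{Proposition Regular Protomodular} to get that $!_X^*$ reflects jointly strongly epimorphic pairs. Your added remarks---that preservation of finite limits is automatic for change-of-base functors, and that the triangle commutes only up to canonical isomorphism---are points the paper leaves implicit, handled correctly here.
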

\begin{proof}
Since by Proposition~\ref{Proposition Regular Protomodular} every change-of-base functor reflects jointly strong\-ly epimorphic pairs, the non-trivial implication follows from Lemma~\ref{lemma: reflect and preserve} applied to the commutative triangle
\[
\xymatrix@!0@R=3em@C=4em{\Pt_{Y}(\C) \ar[rr]^-{f^*} \ar[rd]_-{!_Y^*} && \Pt_{X}(\C) \ar[ld]^-{!_X^*}\\
& \Pt_{0}(\C)}
\] 
where $f\colon {X\to Y}$ is an arbitrary morphism in $\C$ and $0$ is the initial object in $\C$.
\end{proof}

It is worth spelling out what Proposition~\ref{Proposition binary sums} means in a pointed protomodular category with pushouts of split monomorphisms.

\begin{proposition}\label{AC through sum}
A pointed protomodular category with pushouts of split monomorphisms is algebraically coherent if and only if for every diagram of split extensions of the form
\begin{equation}\label{Sum in points}
\vcenter{\xymatrix{
	H \ar@{{ |>}->}[d]_-{h} \ar@{{ >}->}[r] & K \ar@{{ |>}->}[d] & L \ar@{{ |>}->}[d]^-{l} \ar@{{ >}->}[l] \\
	A \ar@{{ >}->}[r]^-{\iota_{A}} \ar@<.5ex>[d]^{p'} & A+_{X}C \ar@<.5ex>[d]^{p} & C \ar@{{ >}->}[l]_-{\iota_{C}} \ar@<.5ex>[d]^{p''} \\
	X \ar@{=}[r] \ar@<.5ex>[u]^-{s'} & X \ar@<.5ex>[u]^-{s} & X \ar@{=}[l] \ar@<.5ex>[u]^-{s''}
}}
\end{equation}
the induced arrow $H+L \to K$ is a strong epimorphism.
\end{proposition}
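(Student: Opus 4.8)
\emph{The plan} is to read the statement off from the characterisation of algebraic coherence via kernel functors together with the analysis of coherent functors in terms of binary coproducts. By Proposition~\ref{Proposition Characterisation kernel functor}, the category $\C$ is algebraically coherent if and only if each kernel functor $\Ker\colon \Pt_X(\C)\to\C$ is coherent, so it suffices to apply Proposition~\ref{Proposition binary sums} to $F=\Ker$, for every $X$. First I would check its hypotheses. The fibre $\Pt_X(\C)$ has binary coproducts, computed as the pushout of the two sections $s'\colon X\to A$ and $s''\colon X\to C$ (which are split monomorphisms); this is exactly the middle row $A\to A+_X C\leftarrow C$ of~\eqref{Sum in points}, and it exists by hypothesis. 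The base category $\C$ also has binary coproducts: since $\C$ is pointed, $0\to H$ and $0\to L$ are split monomorphisms, and their pushout is the coproduct $H+L$. Finally $\Ker$ preserves finite limits, being a change-of-base functor of the fibration of points, and the coproduct injections are split monomorphisms, hence monomorphisms, so Proposition~\ref{Proposition binary sums} applies.

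Next I would identify the comparison morphism. Applying $\Ker$ to the coproduct injections $\iota_A$ and $\iota_C$ of~\eqref{Sum in points} yields precisely the two maps $H\to K$ and $L\to K$, so the comparison morphism $\mus{\Ker(\iota_A)}{\Ker(\iota_C)}\colon \Ker(A)+\Ker(C)\to \Ker(A+_X C)$ is exactly the induced arrow $H+L\to K$. Consequently, condition~(ii) of Proposition~\ref{Proposition binary sums} for $F=\Ker$ reads: \emph{$\Ker$ preserves strong epimorphisms, and $H+L\to K$ is a strong epimorphism for every diagram of the form~\eqref{Sum in points}}. The forward implication is then immediate: if $\C$ is algebraically coherent, each $\Ker$ is coherent, and coherence supplies the second clause for free.

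It remains---and this is where the real work lies---to verify that in a pointed protomodular category the kernel functor \emph{automatically} preserves strong epimorphisms, so that the displayed condition collapses to the single requirement on $H+L\to K$. I would argue as follows. A strong epimorphism of points is also a strong epimorphism in $\C$, since any factorisation through a monomorphism of $\C$ lifts to $\Pt_X(\C)$ (the section factors through it). Now let $f$ be such a strong epimorphism, with kernel map $\bar f$, and write $k'$, $s'$ and $k$, $s$ for the kernel inclusions and sections of its domain and codomain. By protomodularity the pair $(k',s')$ is jointly strongly epimorphic, so by Lemma~\ref{lemma: jointly becomes single} the canonical map $\mus{s'}{k'}$ is a strong epimorphism; composing with $f$ and invoking Lemma~\ref{lemma: composite of strong/extremal with jointly strong/extremal} shows that $(s,k\bar f)$ is jointly strongly epimorphic. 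The main obstacle is the concluding step: to pass from this, together with the fact that $(s,k)$ is jointly strongly epimorphic, to the assertion that $\bar f$ is itself a strong epimorphism. A mere join computation does not suffice, as the subobject through which $\bar f$ factors need not be stable; this is exactly the point at which the semidirect-product structure furnished by the pushouts of split monomorphisms must be brought in, in order to strip off the common section $s(X)$. Once preservation of strong epimorphisms is secured, Proposition~\ref{Proposition binary sums} delivers both directions of the equivalence.
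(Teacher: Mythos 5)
Your first two paragraphs are exactly the paper's proof, which consists of the single sentence that the statement is a combination of Proposition~\ref{Proposition Characterisation kernel functor} with the equivalence (i)~$\Leftrightarrow$~(ii) of Proposition~\ref{Proposition binary sums}: your identification of the fibrewise coproduct with $A+_{X}C$, of the comparison morphism with the induced arrow $H+L\to K$, and your forward implication are all correct. The divergence is in your last paragraph, where you rightly point out that condition~(ii) of Proposition~\ref{Proposition binary sums} has a second clause --- preservation of strong epimorphisms by $\Ker$ --- which the statement of Proposition~\ref{AC through sum} does not mention, so the backward implication is complete only once this clause is discharged; and your attempt to discharge it ends in an admitted impasse. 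As written, then, your proposal is not a complete proof. Moreover the particular route you sketch cannot be repaired: knowing that $(s,k\bar f)$ is jointly strongly epimorphic is strictly weaker than knowing that $\bar f$ is a strong epimorphism. For instance in $\Gp$, with $B=\ZZ^{2}\rtimes(\ZZ/2)$ (swap action), $K=\ZZ^{2}$ and $M=\ZZ\times 0$, the subobject $M$ together with the section generates $B$, yet $M\neq K$.

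The missing idea is much simpler. For a morphism $f\colon (A,p,s)\to (B,q,t)$ in $\Pt_{X}(\C)$ one has $p=qf$, hence $\Ker(p)=f^{-1}(\Ker(q))$: the square formed by the two kernel inclusions, $f$ and $\bar f=\Ker(f)$ is a pullback, so $\Ker(f)$ is a pullback of $f$. Since a strong epimorphism in $\Pt_{X}(\C)$ is also a strong epimorphism in $\C$ (your lifting argument), the kernel functor preserves strong epimorphisms whenever strong epimorphisms in $\C$ are pullback-stable --- in particular whenever $\C$ is regular, which covers every homological or semi-abelian category and hence every place where the paper actually invokes Proposition~\ref{AC through sum}. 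In the bare setting of the statement (pointed protomodular with pushouts of split monomorphisms) this clause is genuinely delicate: the paper's one-line proof is silent about it, exactly as you suspected, and indeed the authors add pullback-stability of strong epimorphisms as an explicit hypothesis in the remark following Theorem~\ref{Theorem Bemol}, where the same preservation property is needed. So your instinct located a real subtlety that the paper glosses over; but to turn your proposal into a proof you should replace the inconclusive join argument by the pullback observation above (stating the result under a regularity or pullback-stability assumption), or else supply the argument for the general protomodular case, which neither you nor the paper provides.
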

\begin{proof}
This is a combination of Proposition~\ref{Proposition binary sums} (i) $\Leftrightarrow$ (ii) and Proposition~\ref{Proposition Characterisation kernel functor}.
\end{proof}

Using Proposition~\ref{Proposition binary sums}, this result may be rephrased as follows. Note the resemblance with the strong protomodularity condition (cf.\ Subsection~\ref{SP}).

\begin{corollary}\label{for join normal}
A pointed protomodular category with pushouts of split monomorphisms is algebraically coherent if and only if for each diagram such as~\eqref{Sum in points}, $K$~is the join of $H$ and $L$ in~${A+_XC}$.\noproof
\end{corollary}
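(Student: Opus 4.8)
The plan is to treat this corollary as a direct reformulation of Proposition~\ref{AC through sum}: it suffices to show that, for a single diagram of the form~\eqref{Sum in points}, the induced arrow $H+L\to K$ is a strong epimorphism if and only if $K$ is the join of $H$ and $L$ in $A+_{X}C$. The bridge between the two formulations will be Lemma~\ref{lemma: jointly becomes single}, which relates a single strong epimorphism to a jointly strongly epimorphic cospan, together with Lemma~\ref{lemma: joins and extremal}, which relates jointly extremally epimorphic cospans to joins; the passage between the strong and the extremal versions is harmless because the ambient category has finite limits (Lemma~\ref{lemma: strong implies extremal}).

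First I would exhibit $H$, $L$ and $K$ as subobjects of $A+_{X}C$ arranged into a diagram of the exact shape required by Lemma~\ref{lemma: joins and extremal}, with $Z=A+_{X}C$ and with $K$ playing the role of the candidate join~$M$. Since the coproduct injections $\iota_{A}$ and $\iota_{C}$ are monomorphisms, the composites $\iota_{A}h$ and $\iota_{C}l$ present $H$ and $L$ as subobjects of $A+_{X}C$. Because $\iota_{A}$ and $\iota_{C}$ are morphisms of points, we have $p\iota_{A}h=p'h=0$ and $p\iota_{C}l=p''l=0$, so each of these composites factors uniquely through $K=\Ker(p)$. A short diagram chase identifies the two resulting factorisations $H\to K$ and $L\to K$ with the upper horizontal monomorphisms of~\eqref{Sum in points}, and shows that all the triangles involving $H\hookrightarrow K\hookrightarrow A+_{X}C$ and $L\hookrightarrow K\hookrightarrow A+_{X}C$ commute.

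With this diagram in place I would run the chain of equivalences. By Lemma~\ref{lemma: jointly becomes single} the arrow $H+L\to K$ is a strong epimorphism precisely when the cospan $(H\to K,\,L\to K)$ is jointly strongly epimorphic, equivalently (Lemma~\ref{lemma: strong implies extremal}) jointly extremally epimorphic; and by Lemma~\ref{lemma: joins and extremal} the latter holds precisely when $K$ is the join of $H$ and $L$ in $A+_{X}C$. Feeding this equivalence into Proposition~\ref{AC through sum} gives the stated characterisation of algebraic coherence.

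The argument is essentially formal, so I do not expect a serious obstacle; protomodularity plays no role in the rephrasing itself, having already been used in Proposition~\ref{AC through sum}. The only point demanding care is the bookkeeping of the first step---verifying that the maps $H\to K$ and $L\to K$ induced by the kernel $K=\Ker(p)$ really are the top row of~\eqref{Sum in points}, and that the whole configuration matches the hypotheses of Lemma~\ref{lemma: joins and extremal} with $M=K$---since the entire equivalence rests on being allowed to apply that lemma to this particular diagram.
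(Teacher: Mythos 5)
Your proof is correct and takes essentially the same route as the paper: the paper obtains this corollary by rephrasing Proposition~\ref{AC through sum} via Proposition~\ref{Proposition binary sums}, whose relevant equivalence is itself established by Lemmas~\ref{lemma: joins and extremal} and~\ref{lemma: jointly becomes single}---precisely the lemmas you apply directly to the diagram~\eqref{Sum in points}. Your unfolding of this into the pointwise equivalence ``$H+L\to K$ is a strong epimorphism if and only if $K=H\join L$ in $A+_{X}C$'' (with Lemma~\ref{lemma: strong implies extremal} mediating between strong and extremal) is exactly the content of the paper's one-line argument, so the difference is presentational rather than mathematical.
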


\subsection{Algebraic coherence in terms of the functors $X\flat (-)$.}\label{flat}
We end this section with a characterisation of algebraic coherence in terms of the action monad $X\flat (-)$.
Recall from~\cite{Bourn-Janelidze:Semidirect,BJK} that $X\flat (-)\colon{\C\to \C}$ takes an object $Y$ and sends it to the kernel in the short exact sequence
\[
\xymatrix{
0 \ar[r] & X\flat Y \ar@{{ |>}->}[r]^-{\kappa_{X,Y}} & X + Y \ar@{-{ >>}}@<0.5ex>[r]^-{\mus{1_{X}}{0}} & X \ar@<0.5ex>[l]^-{\iota_X} \ar[r] & 0.}
\]
This functor is part of a monad on $\C$, induced by the adjunction $(X+(-))\dashv \Ker $, for which the algebras are called \defn{internal $X$-actions} and which gives rise to a comparison $\Pt_{X}(\C)\to \C^{X\flat(-)}$.
For instance, any internal action $\xi\colon{X\flat Y\to Y}$ of a group $X$ on a group $Y$ corresponds to a homomorphism ${X\to \Aut(Y)}$.
In other categories, however, interpretations of the concept of internal object action may look very different~\cite{BJK,BJK2}.

\begin{lemma}\label{Lemma Bemol}
If $\C$ is a pointed algebraically coherent category with binary coproducts, then for any object $X$, the functor $X\flat(-)\colon {\C\to \C}$ preserves jointly strongly epimorphic pairs.
\end{lemma}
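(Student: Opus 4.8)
The plan is to factor $X\flat(-)$ as a composite of two functors, each of which preserves jointly strongly epimorphic pairs for a different reason, and then to use that such preservation is closed under composition. By the very definition of $X\flat(-)$ through the displayed short exact sequence, we have $X\flat(-) = \Ker\circ(X+(-))$, where $X+(-)\colon\C\to\Pt_X(\C)$ sends an object $Y$ to the point $(X+Y,\mus{1_X}{0},\iota_X)$ and $\Ker\colon\Pt_X(\C)\to\C$ takes a point to the kernel of its structural split epimorphism. The adjunction $(X+(-))\dashv\Ker$ recalled just before the statement is exactly the assertion that $X+(-)$ is a left adjoint with right adjoint $\Ker$.

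First I would deal with $\Ker$. It is precisely the change-of-base functor $(!_X)^{*}\colon\Pt_X(\C)\to\Pt_0(\C)$ along the unique morphism $!_X\colon 0\to X$, under the identification $\Pt_0(\C)\cong\C$ available because $\C$ is pointed. Hence, directly from the definition of algebraic coherence (Definition~\ref{Definition Algebraically Coherent Category}), $\Ker$ is coherent, and in particular it preserves jointly strongly epimorphic pairs; note that protomodularity is not needed for this half.

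Next I would show that the left adjoint $X+(-)$ preserves jointly strongly epimorphic pairs. Let $(f\colon A\to C,g\colon B\to C)$ be such a pair in $\C$. By Lemma~\ref{lemma: jointly becomes single} this amounts to $\mus{f}{g}\colon A+B\to C$ being a strong epimorphism. Being a left adjoint, $X+(-)$ preserves binary coproducts, so the coproduct of the points $X+A$ and $X+B$ in $\Pt_X(\C)$ exists and equals $X+(A+B)$, and the induced comparison morphism into $X+C$ is $X+\mus{f}{g}$. The key point is that every left adjoint preserves strong epimorphisms: transposing a square against a monomorphism in the codomain, and using that the right adjoint $\Ker$ preserves monomorphisms, one obtains from the strong-epi lifting property in $\C$ and the naturality of the adjunction the required diagonal. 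Consequently $X+\mus{f}{g}$ is a strong epimorphism in $\Pt_X(\C)$, and a second application of Lemma~\ref{lemma: jointly becomes single}, now in $\Pt_X(\C)$ where the relevant coproduct exists, shows that $(X+f,X+g)$ is jointly strongly epimorphic.

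Finally, applying the coherent functor $\Ker$ to the jointly strongly epimorphic pair $(X+f,X+g)$ yields the pair $(X\flat f,X\flat g)$, which is therefore jointly strongly epimorphic, as desired. The only genuinely non-formal step is the preservation of strong epimorphisms by $X+(-)$; everything else is bookkeeping with the adjunction, the identification of $\Ker$ with a change-of-base functor, and two uses of Lemma~\ref{lemma: jointly becomes single}. I expect the main obstacle to be merely making the ``left adjoints preserve strong epimorphisms'' argument precise (or citing it as standard), together with the small care needed to check that the coproduct $X+A\sqcup X+B$ entering Lemma~\ref{lemma: jointly becomes single} really does exist in $\Pt_X(\C)$, which it does as the image $X+(A+B)$ of a coproduct in $\C$.
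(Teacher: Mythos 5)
Your proposal is correct and takes essentially the same route as the paper, whose entire proof is the observation that $X\flat(-)=\Ker\circ(X+(-))$, that kernel functors are coherent (by algebraic coherence, as you say), and that left adjoints preserve jointly strongly epimorphic pairs. Your only deviation is at the level of detail: you verify this last fact via Lemma~\ref{lemma: jointly becomes single} plus preservation of strong epimorphisms, whereas the same transposition argument (using that $\Ker$, like any right adjoint, preserves monomorphisms) applies directly to jointly strongly epimorphic pairs, making the coproduct bookkeeping in $\Pt_X(\C)$ unnecessary.
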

\begin{proof}
This follows from the fact that kernel functors are coherent while left adjoints preserve jointly strongly epimorphic pairs.
\end{proof}

\begin{lemma}\label{lemma: covering}
Let $F\colon \C\to \D$ and $G\colon \D\to \E$ be functors such that $\C$ has binary coproducts and $F$ preserves them, $F$ preserves jointly strongly epimorphic pairs, $G$ preserves strong epimorphisms, and for every $D$ in $\D$ there exists a strong epimorphism ${F(C) \to D}$.
$GF$ preserves jointly strongly epimorphic pairs if and only if $G$ does.
\end{lemma}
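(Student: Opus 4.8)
The plan is to prove the two implications separately; the direction ``if $G$ preserves jointly strongly epimorphic pairs then so does $GF$'' is immediate, while the converse carries all the content. For the easy direction, if $(u,v)$ is a jointly strongly epimorphic cospan in $\C$, then $(F(u),F(v))$ is jointly strongly epimorphic because $F$ preserves such pairs, whence $(GF(u),GF(v))$ is jointly strongly epimorphic because $G$ does. So $GF$ preserves jointly strongly epimorphic pairs as soon as $G$ does.

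For the converse, assume $GF$ preserves jointly strongly epimorphic pairs and let $(f\colon D_1\to D, g\colon D_2\to D)$ be a jointly strongly epimorphic cospan in $\D$. First I would invoke the covering hypothesis to pick strong epimorphisms $e_1\colon F(C_1)\to D_1$ and $e_2\colon F(C_2)\to D_2$. By Lemma~\ref{lemma: composite of strong/extremal with jointly strong/extremal}~(b), applied once to each leg, the cospan $(fe_1,ge_2)$ is again jointly strongly epimorphic in $\D$.

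The key step is to route this cospan through a coproduct lying in the image of $F$. Since $F$ preserves binary coproducts, $F(C_1+C_2)$ is a coproduct of $F(C_1)$ and $F(C_2)$ in $\D$, with injections $F(\iota_{C_1})$ and $F(\iota_{C_2})$; so Lemma~\ref{lemma: jointly becomes single} yields a strong epimorphism $h=\mus{fe_1}{ge_2}\colon F(C_1+C_2)\to D$ with $hF(\iota_{C_1})=fe_1$ and $hF(\iota_{C_2})=ge_2$. The coproduct injections $(\iota_{C_1},\iota_{C_2})$ form a jointly strongly epimorphic cospan in $\C$ (again by Lemma~\ref{lemma: jointly becomes single}, as $\mus{\iota_{C_1}}{\iota_{C_2}}=1_{C_1+C_2}$), so by hypothesis $(GF(\iota_{C_1}),GF(\iota_{C_2}))$ is jointly strongly epimorphic in $\E$. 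Because $G$ preserves strong epimorphisms, $G(h)$ is a strong epimorphism, and Lemma~\ref{lemma: composite of strong/extremal with jointly strong/extremal}~(d) then shows that $(G(h)GF(\iota_{C_1}),G(h)GF(\iota_{C_2}))=(G(fe_1),G(ge_2))$ is jointly strongly epimorphic in $\E$.

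Finally I would strip off the coverings. As $e_1$ and $e_2$ are strong epimorphisms preserved by $G$, the maps $G(e_1)$ and $G(e_2)$ are strong epimorphisms; writing $G(fe_1)=G(f)G(e_1)$ and $G(ge_2)=G(g)G(e_2)$ by functoriality, two further applications of Lemma~\ref{lemma: composite of strong/extremal with jointly strong/extremal}~(b) (one per leg) turn $(G(f)G(e_1),G(g)G(e_2))$ into $(G(f),G(g))$, which is therefore jointly strongly epimorphic, as required. The main obstacle is precisely that the given cospan sits over an arbitrary object $D$ that need not lie in the image of $F$; the device overcoming this is to cover only the two domains $D_1$ and $D_2$ by objects of the form $F(C_i)$ and to compress the resulting pair into the single comparison map $\mus{fe_1}{ge_2}$ out of $F(C_1+C_2)$, so that the hypothesis on $GF$ is applied to (the image of) the coproduct injections rather than to the original cospan.
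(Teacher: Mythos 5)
Your proof is correct and follows essentially the same route as the paper's: cover $D_1$ and $D_2$ by strong epimorphisms $e_i\colon F(C_i)\to D_i$, use preservation of coproducts to get the comparison map $h=\mus{fe_1}{ge_2}\colon F(C_1+C_2)\to D$ (the paper's $e$), apply the hypothesis on $GF$ to the coproduct injections, compose with the strong epimorphism $G(h)$ via Lemma~\ref{lemma: composite of strong/extremal with jointly strong/extremal}, and strip off $G(e_1)$, $G(e_2)$ at the end. Your write-up is simply a more detailed rendering of the paper's argument, with the same lemmas invoked at the same steps.
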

\begin{proof}
The ``if'' part follows from the fact that the composite of functors which preserve jointly strongly epimorphic pairs, preserves jointly strongly epimorphic pairs.
For the ``only if'' part
let $(g_1,g_2)$ be a jointly strongly epimorphic cospan and construct the diagram
\[
\xymatrix@!0@C=4em@R=2em{
F(C_1) \ar[rr]^-{F(\iota_{C_{1}})} \ar[dd]_{e_1} && F(C_1 + C_2) \ar[dd]^{e} && F(C_2) \ar[ll]_-{F(\iota_{C_{2}})}\ar[dd]^{e_2}\\
&& &&\\
D_1\ar[rr]_{g_1} && D && D_2 \ar[ll]^{g_2}
}
\]
where $e_1$ and $e_2$ are arbitrary strong epimorphism existing by assumption, and $e$ is induced by the coproduct.
Since $(g_1,g_2)$ is jointly strongly epimorphic and $e_1$ and~$e_2$ are strong, $e$ is necessarily strong by Lemmas~\ref{lemma: composite of strong/extremal with jointly strong/extremal} and~\ref{lemma: jointly becomes single}.
Therefore, since $G$ preserves strong epimorphisms and $GF$ preserves jointly strongly epimorphic pairs it follows that
\[
(G(e) GF(\iota_{C_{1}}), G(e) GF(\iota_{C_{2}})) = (G(g_1)G(e_1),G(g_2)G(e_2))
\]
is a jointly strongly epimorphic cospan, and so $(G(g_1),G(g_2))$ is jointly strongly epimorphic by Lemma~\ref{lemma: composite of strong/extremal with jointly strong/extremal}.
\end{proof}

One situation where this lemma applies is when $F\dashv G$ is an adjunction where the functor $G$ preserves strong epimorphisms with a strongly epimorphic counit.
Taking ${(X+(-))\dashv \Ker}$ we find, in particular, Theorem~\ref{Theorem Bemol}. Recall from~\cite{Borceux-Bourn} that a pointed, regular and protomodular category is called \defn{homological}.

\begin{theorem}\label{Theorem Bemol}
Let $\C$ be a homological category with binary coproducts.
$\C$ is algebraically coherent if and only if for every $X$, the functor $X\flat(-)\colon {\C\to \C}$ preserves jointly strongly epimorphic pairs.\noproof
\end{theorem}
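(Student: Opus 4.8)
The plan is to derive Theorem~\ref{Theorem Bemol} as a direct application of Lemma~\ref{lemma: covering} to the adjunction $(X+(-))\dashv \Ker$, using Proposition~\ref{Proposition Characterisation kernel functor} to reduce algebraic coherence to coherence of the kernel functors. The forward implication is already supplied by Lemma~\ref{Lemma Bemol}, so the real content is the converse: assuming $X\flat(-)$ preserves jointly strongly epimorphic pairs for every $X$, I want to conclude that $\C$ is algebraically coherent. Since a homological category is pointed, regular and protomodular, Proposition~\ref{Proposition Characterisation kernel functor} tells me it suffices to show that each kernel functor $\Ker\colon \Pt_X(\C)\to \C$ is coherent. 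Being a change-of-base functor, $\Ker$ automatically preserves finite limits, so the only thing to check is that it preserves jointly strongly epimorphic pairs.

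First I would set up the data for Lemma~\ref{lemma: covering} by taking $F \DefEq (X+(-))\colon \C\to \Pt_X(\C)$ and $G \DefEq \Ker\colon \Pt_X(\C)\to \C$, so that $GF = X\flat(-)$ by definition of the action monad. I then need to verify the four hypotheses of the lemma. The functor $F = X+(-)$ is a left adjoint, hence preserves coproducts and preserves jointly strongly epimorphic pairs; this gives the first two conditions. For $G = \Ker$ I must check that it preserves strong epimorphisms: this holds because $\Ker$ is a change-of-base functor along $!_X\colon 0\to X$ in a regular protomodular (i.e.\ homological) category, where such functors are regular. Finally, the counit of the adjunction $(X+(-))\dashv \Ker$ must be a strong epimorphism at every object of $\Pt_X(\C)$, which supplies the required covering $F(C)\to D$; this is the standard fact that in a homological category the canonical comparison exhibiting a point as a quotient of a free action is a regular epimorphism.

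With all hypotheses of Lemma~\ref{lemma: covering} in place, the lemma yields the equivalence: $GF = X\flat(-)$ preserves jointly strongly epimorphic pairs if and only if $G = \Ker$ does. Thus the hypothesis on $X\flat(-)$ transfers exactly to coherence of the kernel functors, and Proposition~\ref{Proposition Characterisation kernel functor} then delivers algebraic coherence of $\C$. The forward direction is immediate from Lemma~\ref{Lemma Bemol} (or equally from the same equivalence read in the other direction).

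\emph{The hard part} is not the logical skeleton, which is essentially formal once Lemma~\ref{lemma: covering} is granted, but rather the careful verification that $\Ker$ preserves strong epimorphisms and that the counit of $(X+(-))\dashv\Ker$ is strongly epic. These two facts are exactly what make the homological hypothesis (regularity together with protomodularity) indispensable: regularity is what guarantees a sensible notion of strong/regular epimorphism stable under the relevant pullbacks, while protomodularity (via Proposition~\ref{Proposition Regular Protomodular} and Proposition~\ref{Proposition Characterisation kernel functor}) is what lets coherence of the single family of kernel functors propagate to all change-of-base functors. I would therefore treat the bulk of the argument as an invocation of these already-established structural results, spelling out only the identification $GF = X\flat(-)$ and the checking of the lemma's hypotheses, since everything else then follows for purely formal reasons.
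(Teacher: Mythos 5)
Your proposal is correct and follows essentially the same route as the paper: the paper likewise obtains Theorem~\ref{Theorem Bemol} by applying Lemma~\ref{lemma: covering} to the adjunction $(X+(-))\dashv\Ker$ (noting that $\Ker$ preserves strong epimorphisms and has strongly epimorphic counit), identifying $\Ker(X+(-))=X\flat(-)$, and invoking Proposition~\ref{Proposition Characterisation kernel functor}. Your verification of the lemma's hypotheses, including the role of regularity for $\Ker$ preserving strong epimorphisms and of protomodularity for the strongly epimorphic counit, matches the paper's (largely implicit) reasoning.
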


In the article~\cite{MM}, the authors consider a variation on this condition, asking that the functors $X\flat(-)$ preserve jointly epimorphic pairs formed by semidirect product injections.

Note that in the proof of Theorem~\ref{Theorem Bemol} we did not use the existence of coequalisers in $\C$, so it is actually valid in any  pointed protomodular category with binary coproducts in which strong epimorphisms are pullback-stable.

Lemma~\ref{Lemma Bemol} can also be used as follows. Recall from~\cite{JMU, Janelidze-Marki-Ursini} that in a pointed and regular category, a \defn{clot} is a subobject $K\leq Y$ such that the conjugation action on $Y$ restricts to it.

\begin{proposition}\label{Prop join actions}
Let $\C$ be a pointed algebraically coherent category with binary coproducts and binary joins of subobjects. Given $K$, $L\leq Y$ in $\C$, if $\xi\colon{X\flat Y\to Y}$ is an action which restricts to $K$ and $L$, then $\xi$ restricts to~${K\join L}$. In particular, if $K$ and $L$ are clots in $Y$, then so is $K \join L$.
\end{proposition}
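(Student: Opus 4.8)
The plan is to combine the description of the join through jointly strongly epimorphic pairs (Lemma~\ref{lemma: joins and extremal}) with the fact that $X\flat(-)$ transports such pairs (Lemma~\ref{Lemma Bemol}), and then to read off the desired factorisation from the universal property of jointly strongly epimorphic pairs. Write $k\colon K\to Y$, $l\colon L\to Y$ and $j\colon K\join L\to Y$ for the inclusions, and let $k'\colon K\to K\join L$, $l'\colon L\to K\join L$ be the comparison monomorphisms, so that $jk'=k$ and $jl'=l$. By Lemma~\ref{lemma: joins and extremal}, the pair $(k',l')$ is jointly extremally, hence jointly strongly, epimorphic. The hypothesis that $\xi$ \emph{restricts} to $K$ means precisely that $\xi\circ(X\flat k)$ factors through the monomorphism $k$, say as $k\xi_{K}=\xi\circ(X\flat k)$ for a (unique) $\xi_{K}\colon X\flat K\to K$; similarly one obtains $\xi_{L}\colon X\flat L\to L$.

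First I would apply Lemma~\ref{Lemma Bemol} to the pair $(k',l')$: since $\C$ is pointed, algebraically coherent and has binary coproducts, the functor $X\flat(-)$ preserves jointly strongly epimorphic pairs, so
\[
\bigl(X\flat k'\colon X\flat K\to X\flat(K\join L),\ X\flat l'\colon X\flat L\to X\flat(K\join L)\bigr)
\]
is again jointly strongly epimorphic. Next I would test the composite $\phi=\xi\circ(X\flat j)\colon X\flat(K\join L)\to Y$ against the two legs of this pair, against the monomorphism $j$. Using functoriality of $X\flat(-)$ together with $jk'=k$ gives $\phi\circ(X\flat k')=\xi\circ(X\flat k)=k\xi_{K}=j\,(k'\xi_{K})$, so the first leg composed with $\phi$ factors through $j$ via $k'\xi_{K}$; symmetrically the second factors through $j$ via $l'\xi_{L}$. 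As $(X\flat k',X\flat l')$ is jointly strongly epimorphic and $j$ is a monomorphism, the defining property yields a unique $\varphi\colon X\flat(K\join L)\to K\join L$ with $j\varphi=\phi=\xi\circ(X\flat j)$. This $\varphi$ is exactly the restriction of $\xi$ to $K\join L$ (its compatibility with the action structure follows by cancelling the monomorphism $j$ in the relevant equalities), which proves the first assertion. The ``in particular'' is then immediate on taking $X=Y$ and $\xi$ the conjugation action of $Y$ on itself: if $K$ and $L$ are clots then conjugation restricts to each, hence to $K\join L$.

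I expect the computations to be routine once the diagram is in place; the only point needing care is checking that each leg of the pair $(X\flat k',X\flat l')$ composes with $\phi$ into a map factoring through $j$, which is precisely where the restriction hypotheses on $K$ and $L$ are used. The substantive input — and the reason algebraic coherence is required — is Lemma~\ref{Lemma Bemol}: it guarantees that $X\flat(-)$ sends the jointly strongly epimorphic pair witnessing the join to a jointly strongly epimorphic pair, so that a universal property is available to produce $\varphi$. Without this preservation there would be no mechanism forcing the globally defined action map to factor through the join.
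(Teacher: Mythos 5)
Your proof is correct and takes essentially the same route as the paper's: identify the comparison cospan into $K\join L$ as jointly strongly epimorphic (Lemma~\ref{lemma: joins and extremal}), push it forward along $X\flat(-)$ via Lemma~\ref{Lemma Bemol}, and use the universal property of jointly strongly epimorphic pairs against the monomorphism $K\join L\to Y$ to obtain the restricted structure map $\varphi$. The only divergence is in verifying that $\varphi$ is an algebra for the monad $X\flat(-)$: the paper checks the multiplication axiom by precomposing with a jointly strongly epimorphic pair obtained from a \emph{second} application of Lemma~\ref{Lemma Bemol} (to the images under $X\flat(X\flat(-))$ of the comparison maps) and using that the restrictions $\xi_K$, $\xi_L$ are themselves algebras, whereas you cancel the monomorphism $j\colon K\join L\to Y$ and use only naturality of $\eta$ and $\mu$ together with the algebra axioms for $\xi$. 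Your variant is sound: for instance
\[
j\varphi\mu_{K\join L}=\xi(X\flat j)\mu_{K\join L}=\xi\mu_Y(X\flat(X\flat j))=\xi(X\flat\xi)(X\flat(X\flat j))=\xi(X\flat j)(X\flat\varphi)=j\varphi(X\flat\varphi),
\]
and cancelling $j$ yields the multiplication square; the unit axiom is analogous. This makes your argument marginally more economical than the paper's, since it invokes Lemma~\ref{Lemma Bemol} only once and never needs the algebra structure of $\xi_K$ and $\xi_L$ beyond their existence as restrictions.
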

\begin{proof}
Let us consider the diagram
\[
\xymatrix{
	& & X\flat L \ar[dl]_{X\flat j} \ar[dr]^{X\flat l} \ar@<.5ex>[dd]^(.35){\xi_L}|(.5){\hole}|(.64){\hole} \\
	& X\flat(K\join L) \ar@{-->}@<.5ex>[dd]^(.55){\xi_{K \join L}} \ar[rr]_(.35){X\flat m}
		& & X\flat Y \ar@<.5ex>[dd]^\xi \\
	X \flat K \ar[ur]^{X \flat i} \ar@<.5ex>[dd]^{\xi_K} \ar[urrr]_{X\flat k}
		& & L \ar[dl]_j \ar[dr]^l \ar@<.5ex>[uu]|(.35){\hole}|(.5){\hole}^(.65){\eta_L} \\
	& K\join L \ar@<.5ex>[uu]^(.45){\eta_{K\join L}}|(.67){\hole} \ar@{ >->}[rr]^m & & Y \ar@<.5ex>[uu]^{\eta_Y} \\
	K \ar[ur]^i \ar[urrr]_k \ar@<.5ex>[uu]^{\eta_K}}
\]
where the arrows at the bottom floor are all inclusions of subobjects of $Y$, $\eta$ is the unit of the monad $X\flat(-)$, and $\xi$ is an action of $X$ on $Y$, with restrictions $\xi_K$ and~$\xi_L$ to $K$ and $L$ respectively.

Since, by Lemma~\ref{Lemma Bemol}, the pair $(X\flat i,X\flat j)$ is jointly strongly epimorphic, and $m$ is a monomorphism, there exists a unique $\xi_{K \join L}$ (the dashed arrow in the diagram) such that $\xi_{K \join L}(X \flat i)=i\xi_K$, $\xi_{K \join L}(X \flat j)=j\xi_L$ and $m\xi_{K \join L}=\xi(X \flat m)$.

It is not difficult to see that $\xi_{K \join L}$ is indeed a retraction of $\eta_{K \join L}$. In order to prove that it is an action, and hence an algebra for the monad $X\flat(-)$, we still have to show that the diagram
$$ \xymatrix@C=4em{
	X\flat(X\flat(K\join L)) \ar[d]_{\mu_{K \join L}} \ar[r]^-{X\flat \xi_{K \join L}}
		& X\flat(K\join L) \ar[d]^{\xi_{K \join L}} \\
	X\flat(K\join L) \ar[r]^-{\xi_{K \join L}} & K \join L,
} $$
where $\mu$ is the multiplication of the monad $X\flat(-)$, commutes.
To prove this we use that the analogous property holds for both $\xi_K$ and $\xi_L$ so that, again by Lemma~\ref{Lemma Bemol}, the pair $(X \flat (X\flat i),X\flat(X \flat j))$ is jointly strongly epimorphic.
\end{proof}

Recall that a subobject in a pointed category is called \defn{Bourn-normal} when it is the normalisation of an equivalence relation~\cite[Section 3.2]{Borceux-Bourn}. In an exact homological category, Bourn-normal subobjects and kernels (= normal subobjects) coincide.

\begin{corollary}
In an algebraically coherent homological category with binary coproducts, the join of two Bourn-normal subobjects is Bourn-normal.
\end{corollary}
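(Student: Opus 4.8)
The plan is to exhibit an equivalence relation whose normalisation is exactly $K\join L$. Write the two Bourn-normal subobjects as $k\colon K\to Y$ and $l\colon L\to Y$, and fix equivalence relations $(R,r_0,r_1)$ and $(S,s_0,s_1)$ on $Y$ with $K=\nu(R)$ and $L=\nu(S)$. A homological category is regular and protomodular, hence a regular Mal'tsev category; there the relational composite $T\DefEq R\circ S$ is again an equivalence relation and is the join $R\join S$ in the lattice of equivalence relations on $Y$. Thus $\nu(T)$ is automatically Bourn-normal, and the task reduces to proving $\nu(T)=K\join L$.

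One inclusion is immediate: from $R\leq T$ and $S\leq T$ and monotonicity of normalisation we get $K,L\leq\nu(T)$, so $K\join L\leq\nu(T)$. For the reverse inclusion I would compute $\nu(T)$ explicitly. Since images are stable under pullback in a regular category, pulling the (regular epi, mono)-factorisation of the defining morphism $R\times_Y S\to Y\times Y$ back along $\gauche 1_Y,0\droite\colon Y\to Y\times Y$ identifies $\nu(T)$ with the image of the composite $q\colon Q\to R\xrightarrow{r_0}Y$, where $Q\DefEq R\times_{r_1,Y}L$ is the pullback of $r_1$ along $l$. As $r_1$ is split by the diagonal and has kernel $K$ (through $r_0$), the projection $Q\to L$ is a split epimorphism with kernel $K$; so $Q$ sits in a split extension $0\to K\to Q\to L\to 0$ in which the kernel followed by $q$ is $k$ and the section followed by $q$ is $l$.

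It then remains to collapse $Q$ onto $K\join L$. Recall that in a pointed protomodular category the kernel and a section of a split epimorphism form a jointly strongly epimorphic pair; by Lemma~\ref{lemma: jointly becomes single} the comparison $K+L\to Q$ is therefore a strong epimorphism. Post-composing it with $q$ returns $\mus{k}{l}$, and precomposing a morphism with a strong epimorphism leaves its image unchanged, so $\nu(T)=\Im q=\Im\mus{k}{l}=K\join L$. Hence $K\join L$ is the normalisation of the equivalence relation $T$, and is Bourn-normal.

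I expect the reverse inclusion $\nu(T)\leq K\join L$ to be the main obstacle: the whole argument turns on recognising the normalisation of the composite relation as the image of a single split extension of $L$ by $K$, and then using protomodularity to see that this image is already generated by $K$ and $L$. Conceptually this is the normal-subobject shadow of Proposition~\ref{Prop join actions}: the conjugation action of $Y$ restricts to $K$ and to $L$, hence by algebraic coherence to $K\join L$, and $T$ is precisely the semidirect-product relation of that restricted action—so one could alternatively read $\nu(T)=K\join L$ off the join of actions. The points needing care are the compatibility of relational composition and of the pullback $Q$ with the (regular epi, mono)-factorisations involved, which is where regularity of the ambient homological category is used.
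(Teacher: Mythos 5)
Your proof is correct, and it takes a genuinely different route from the paper's. The paper's own argument is a two-line specialisation of Proposition~\ref{Prop join actions}: in a homological category Bourn-normal subobjects coincide with clots \cite{MM-NC}, i.e.\ with subobjects to which the conjugation action restricts, and Proposition~\ref{Prop join actions} (whose proof is exactly where algebraic coherence enters, via Lemma~\ref{Lemma Bemol}) shows that an action restricting to $K$ and $L$ restricts to $K\join L$. You instead exhibit a witnessing equivalence relation directly: $T=R\circ S$ is an equivalence relation because a homological category is a regular Mal'tsev category; pullback-stability of images identifies $\nu(T)$ with the image of $q\colon Q= R\times_{Y}L\to Y$; and pointed protomodularity (the kernel and the section of a split epimorphism are jointly strongly epimorphic, by the characterisation in \cite{Bourn1991}) together with Lemma~\ref{lemma: jointly becomes single} makes $K+L\to Q$ a strong epimorphism, whence $\nu(T)=\Im\mus{k}{l}=K\join L$. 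I checked the steps (the identification of the fibre of $R\times_{Y}S\to Y\times Y$ over $\gauche 1_{Y},0\droite$ with $R\times_{Y}L$, the split extension structure on $Q$, and the invariance of images under precomposition with strong epimorphisms) and they are sound; the left/right conventions all work out by symmetry of the relations. The most striking difference is that your argument never invokes algebraic coherence, so it in fact proves the stronger statement that the conclusion holds in \emph{every} homological category with binary coproducts. This does not conflict with the paper: exactness is what is needed to make the join of two \emph{kernels} a kernel (\cite[Corollary 4.3.15]{Borceux-Bourn}), since there the witnessing equivalence relation must be effective, whereas Bourn-normality only asks for an equivalence relation, which the composite $R\circ S$ supplies. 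What the paper's route buys is the much stronger Proposition~\ref{Prop join actions}---restriction of \emph{arbitrary} internal actions to joins, a condition which in the semi-abelian setting is equivalent to algebraic coherence---of which the present corollary is the conjugation-action shadow, as you note; what your route buys is elementarity and strictly greater generality for this particular statement.
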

\begin{proof}
The result follows from the fact that in this context Bourn-normal subobjects coincide with clots~\cite{MM-NC}.
\end{proof}

Notice that, in an exact homological category, the join of two normal subobjects is always normal~\cite[Corollary 4.3.15]{Borceux-Bourn}. 

In fact, in a semi-abelian context, the property in Proposition~\ref{Prop join actions} turns out to be equivalent to algebraic coherence.

\begin{theorem}
Suppose $\C$ is a semi-abelian category. The following are equivalent:
\begin{tfae}
\item $\C$ is algebraically coherent;
\item given $K$, $L\leq Y$ in $\C$, any action $\xi\colon{X\flat Y\to Y}$ which restricts to $K$ and $L$ also restricts to~${K\join L}$.
\end{tfae}
\end{theorem}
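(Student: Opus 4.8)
The forward implication, that (i) implies (ii), is precisely Proposition~\ref{Prop join actions}, so the plan is to prove the converse: assuming the join-of-subalgebras property (ii), I will derive algebraic coherence. The route is through the functors $X\flat(-)$. Since a semi-abelian category is homological with binary coproducts, Theorem~\ref{Theorem Bemol} reduces the problem to showing that, for every object $X$, the functor $X\flat(-)\colon \C\to \C$ preserves jointly strongly epimorphic pairs. So I fix $X$ together with a jointly strongly epimorphic cospan $(f\colon A\to Y,g\colon B\to Y)$, and the goal becomes that $(X\flat f,X\flat g)$ is jointly strongly epimorphic.

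First I would pass to images. By Lemmas~\ref{lemma: composite of strong/extremal with jointly strong/extremal} and~\ref{lemma: joins and extremal}, this amounts to proving that $\Im(X\flat f)\join\Im(X\flat g)=X\flat Y$ in the subobject lattice of $X\flat Y$, while from $(f,g)$ being jointly strongly epimorphic we already know $\Im(f)\join\Im(g)=Y$. The crucial structural input is that $X\flat Y$ is the free internal $X$-action on $Y$, carrying the monad multiplication $\mu_Y\colon X\flat(X\flat Y)\to X\flat Y$ as its action; and that $X\flat f$ and $X\flat g$, by naturality of $\mu$, are morphisms of algebras. Consequently the images $\overline A := \Im(X\flat f)$ and $\overline B := \Im(X\flat g)$ are \emph{subalgebras} of $(X\flat Y,\mu_Y)$. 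Feeding $\overline A$, $\overline B\leq X\flat Y$ into hypothesis (ii) then tells us that the join $\overline A\join\overline B$, formed in $\C$, is itself a subalgebra.

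The next step transports this across the equivalence $\Pt_X(\C)\simeq\C^{X\flat(-)}$ to the free point $q := \mus{1_X}{0}\colon X+Y\to X$ (split by $\iota_X$), whose kernel is $X\flat Y$. Under this equivalence the subalgebras of $(X\flat Y,\mu_Y)$ correspond order-isomorphically to the subpoints of $(X+Y,q,\iota_X)$, namely the subobjects $B\leq X+Y$ containing $\iota_X(X)$, the correspondence being $B\mapsto\Ker(q|_B)=B\meet(X\flat Y)$. Taking the regular-epimorphism--monomorphism factorisations of the morphisms of points $\mus{\iota_X}{\iota_Y f}\colon X+A\to X+Y$ and $\mus{\iota_X}{\iota_Y g}\colon X+B\to X+Y$---whose images are $B_A := \iota_X(X)\join\iota_Y(\Im f)$ and $B_B := \iota_X(X)\join\iota_Y(\Im g)$---and using that the kernel functor is exact in a homological category, one checks that $B_A$ and $B_B$ are exactly the subpoints corresponding to $\overline A$ and $\overline B$.

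Finally I would read off the conclusion from the order isomorphism. It sends the join of subpoints $B_A\join B_B$ to the smallest subalgebra containing $\overline A$ and $\overline B$; but by the second step that smallest subalgebra is just $\overline A\join\overline B$ computed in $\C$. On the point side, $B_A\join B_B=\iota_X(X)\join\iota_Y(\Im f\join\Im g)=\iota_X(X)\join\iota_Y(Y)=X+Y$, because $\Im f\join\Im g=Y$ and the coproduct injections are jointly strongly epimorphic (Lemma~\ref{lemma: jointly becomes single}). Therefore $\overline A\join\overline B=\Ker(q|_{X+Y})=X\flat Y$, as required. The step I expect to be delicate is precisely the bookkeeping of the two joins: the join $\overline A\join\overline B$ taken in $\C$ is a priori only a subobject, and the naive attempt to compute $(B_A\join B_B)\meet(X\flat Y)$ directly fails for want of distributivity. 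It is exactly hypothesis (ii) that upgrades $\overline A\join\overline B$ to a subalgebra, which is what allows the point--algebra equivalence to replace this intractable intersection by the trivial join $B_A\join B_B=X+Y$.
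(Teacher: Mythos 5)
Your proof is correct, but it is organised genuinely differently from the paper's. The forward implication is handled identically (Proposition~\ref{Prop join actions}); for the converse, you reduce via Theorem~\ref{Theorem Bemol} to showing that each $X\flat(-)$ preserves jointly strongly epimorphic pairs, and verify this concretely at the free algebra $(X\flat Y,\mu_Y)$: the images $\overline A$ and $\overline B$ of $X\flat f$ and $X\flat g$ are subalgebras (this silently uses that $X\flat(-)$ preserves regular epimorphisms, so that the diagonal fill-in producing the restricted action exists---true in any semi-abelian category, since $X\flat e$ is a pullback of the regular epimorphism $1_X+e$), hypothesis (ii) upgrades their subobject join to a subalgebra, and the subpoint--subalgebra order isomorphism converts $B_A\join B_B=X+Y$ into $\overline A\join\overline B=X\flat Y$. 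The paper argues instead at the level of the kernel functor $\Ker\colon{\Pt_X(\C)\to\C}$: through the same equivalence $\Pt_X(\C)\simeq\C^{X\flat(-)}$, condition (ii) becomes a factorisation statement for cospans of points; since $\Ker$ reflects jointly strongly epimorphic pairs and monomorphisms (Lemma~\ref{Lemma Reg Protomodular}), it reflects joins of subobjects, hence by (ii) it also preserves them, so it is coherent by Proposition~\ref{Proposition binary sums}, and Proposition~\ref{Proposition Characterisation kernel functor} then yields algebraic coherence. The two arguments share their engine---the points--algebras equivalence together with the reflection properties of $\Ker$, which your order isomorphism repackages---but the paper's version is shorter and needs no computation at free objects, while yours makes the join bookkeeping explicit and shows, as a by-product, that it suffices to assume (ii) only for the free actions $\mu_Y$ on objects of the form $X\flat Y$. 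The cost is the pair of verifications your sketch defers as routine: that images of algebra morphisms are subalgebras, and that $\Im(1_X+f)$ is precisely the subpoint corresponding to $\Im(X\flat f)$; both indeed hold in the semi-abelian setting.
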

\begin{proof}
One implication is Proposition~\ref{Prop join actions}. Conversely, since here the comparison $\Pt_{X}(\C)\to \C^{X\flat(-)}$ is an equivalence, condition (ii) says that any cospan in $\Pt_{X}(\C)$ whose restriction to kernels is $K$, $L\leq Y$ factors through a morphism in $\Pt_{X}(\C)$ whose restriction to kernels is $K\join L\leq Y$. But since the kernel functor $\Ker\colon {\Pt_X(\C) \to \C}$ reflects jointly strongly epimorphic pairs (Lemma~\ref{Lemma Reg Protomodular}) and monomorphisms, it also reflects joins of subobjects. Then it preserves them, hence it is coherent by Proposition~\ref{Proposition binary sums}. The result now follows from Proposition~\ref{Proposition Characterisation kernel functor}.
\end{proof}



\section{Examples, non-examples and counterexamples}\label{Examples}

Before treating algebraic examples, let us first consider those given by topos theory.

\begin{proposition}
Any coherent category is algebraically coherent.
\end{proposition}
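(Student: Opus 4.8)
The plan is to compare the change-of-base functors of the two fibrations by means of the forgetful functors $U_{X}\colon \Pt_{X}(\C)\to (\C\downarrow X)$ that discard the chosen splittings. A coherent category is in particular finitely complete, so the notion of algebraic coherence applies, and by Definition~\ref{Definition Algebraically Coherent Category} I must show that for each $f\colon X\to Y$ the functor $f^{*}\colon \Pt_{Y}(\C)\to \Pt_{X}(\C)$ is coherent. Writing $f^{*}$ also for the slice change-of-base functor, these fit into the square
\[
\xymatrix{
\Pt_{Y}(\C) \ar[r]^-{U_{Y}} \ar[d]_-{f^{*}} & (\C\downarrow Y) \ar[d]^-{f^{*}}\\
\Pt_{X}(\C) \ar[r]_-{U_{X}} & (\C\downarrow X),
}
\]
which commutes up to isomorphism: pulling a point back along $f$ and then forgetting its splitting produces the same object $X\times_{Y}(-)$ of $(\C\downarrow X)$ as first forgetting the splitting and then pulling the underlying map back along $f$. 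As a change-of-base functor of the fibration of points, $f^{*}\colon \Pt_{Y}(\C)\to \Pt_{X}(\C)$ is a pullback functor and hence automatically preserves finite limits, so by Definition~\ref{Definition Coherent Functor} it only remains to verify that it preserves jointly strongly epimorphic pairs.

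For this I would invoke Lemma~\ref{lemma: reflect and preserve} with $F=f^{*}\colon \Pt_{Y}(\C)\to \Pt_{X}(\C)$ and $G=U_{X}$. The composite $GF=U_{X}f^{*}\cong f^{*}U_{Y}$ preserves jointly strongly epimorphic pairs: indeed $U_{Y}$ does so by Lemma~\ref{Lemma Subobject}, while $f^{*}\colon (\C\downarrow Y)\to (\C\downarrow X)$ does so because $\C$ is coherent (Definition~\ref{Definition Coherent Category}), and composites of such functors again preserve jointly strongly epimorphic pairs. Thus, by Lemma~\ref{lemma: reflect and preserve}, it suffices to prove that $U_{X}$ \emph{reflects} jointly strongly epimorphic pairs. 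This is the crux of the argument.

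To establish this, I first note that $U_{X}$ creates pullbacks: the underlying pullback in $(\C\downarrow X)$ of a cospan of points carries a canonical splitting induced by the three given ones, so that $U_{X}$ preserves and reflects monomorphisms. Now let $(u,v)$ be a cospan in $\Pt_{X}(\C)$ with apex $A$ whose image $(U_{X}u, U_{X}v)$ is jointly strongly epimorphic in $(\C\downarrow X)$, and consider any monomorphism of points $m\colon M\to P$ together with a morphism $\phi\colon A\to P$ and liftings of $\phi u$, $\phi v$ through $m$. Applying $U_{X}$ and using that $(U_{X}u, U_{X}v)$ is jointly strongly epimorphic against the monomorphism $U_{X}m$, we obtain a unique $\varphi\colon A\to M$ in $(\C\downarrow X)$ with $(U_{X}m)\varphi=U_{X}\phi$. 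It remains to check that $\varphi$ is a morphism of points, that is, that it commutes with the splittings $s_{A}$ of $A$ and $s_{M}$ of $M$; but since $m$ and $\phi$ are morphisms of points and $U_{X}m$ is a monomorphism,
\[
(U_{X}m)\,\varphi\, s_{A}=(U_{X}\phi)\,s_{A}=s_{P}=(U_{X}m)\,s_{M}
\]
forces $\varphi s_{A}=s_{M}$. Hence $\varphi$ lifts to a (necessarily unique) factorisation in $\Pt_{X}(\C)$, so $(u,v)$ is jointly strongly epimorphic there.

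The main obstacle is precisely this last lifting step: guaranteeing that the factorisation obtained in the slice respects the chosen splittings. It is resolved by the displayed cancellation, which relies on $U_{X}m$ being a monomorphism and on the compatibility of $m$ and $\phi$ with splittings. Everything else is formal: finite-limit preservation is automatic for change-of-base functors, the reduction to $U_{X}$ reflecting jointly strongly epimorphic pairs is handled by Lemma~\ref{lemma: reflect and preserve}, and the preservation statement for $GF$ is assembled from Lemma~\ref{Lemma Subobject} and the hypothesis of coherence of $\C$.
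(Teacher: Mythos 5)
Your proof is correct and follows essentially the same route as the paper, whose proof is the one-line remark that the proposition is an immediate consequence of Lemma~\ref{Lemma Subobject}: transport a jointly strongly epimorphic pair of points to the slices via the forgetful functors, apply coherence of $\C$ there, and lift back along $U_{X}$, which reflects such pairs since it is conservative and preserves monomorphisms. You have simply made explicit the details (the commuting square $U_{X}f^{*}\cong f^{*}U_{Y}$, Lemma~\ref{lemma: reflect and preserve}, and the splitting-compatibility of the induced factorisation) that the paper leaves implicit.
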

\begin{proof}
This is an immediate consequence of Lemma~\ref{Lemma Subobject}.
\end{proof}

\begin{examples}\label{Toposes}
This provides us with all elementary toposes as examples (sets, finite sets, sheaves, etc.). \end{examples}

\begin{example}
The dual of the category of pointed sets is semi-abelian~\cite{Bourn:Dual-topos} algebraically coherent.
One way to verify this is by the dual of the condition of Proposition~\ref{AC through sum} in the category $\Set_{*}$.
Given two elements of $A\times_{X}C$, it suffices to check all relevant cases to see that it is still possible to separate them after~$X$ has been collapsed.
The same argument is valid to prove that $\E_{*}^{\op}$ is algebraically coherent when $\E$ is any boolean topos: the existence of complements allows us to express the cokernel of a monomorphism $m\colon {M\to X}$ in $\Pt_{1}(\E)$ as a disjoint union $(X\setminus M)\sqcup 1$.
Indeed, in the diagram in $\E$
\[
\xymatrix{0 \pullback \ar[r] \ar[d] & M \pullback \ar[r] \ar[d] & 1 \ar[d]\\
X\setminus M \ar[r] & X \ar[r] \pushout & X/M \pushout}
\]
each square is simultaneously a pullback and a pushout---see~\cite{Johnstone:Elephant}.
Being given the opposite
\[
\vcenter{\xymatrix{
	(A\setminus X)\sqcup 1 & (B\setminus X)\sqcup 1 \ar[l]_-{\overline{f}} \ar[r]^-{\overline{g}} & (C\setminus X)\sqcup 1 \\
	A \ar@{-{ >>}}[u] \ar@<.5ex>[d]^{s'} & B \ar@{-{ >>}}[u] \ar[l]_-{f} \ar[r]^-{g} \ar@<.5ex>[d]^{s} & C \ar@{-{ >>}}[u] \ar@<.5ex>[d]^{s''} \\
	X \ar@{=}[r] \ar@<.5ex>[u]^-{p'} & X \ar@<.5ex>[u]^-{p} & X \ar@{=}[l] \ar@<.5ex>[u]^-{p''}
}}
\]
of diagram~\eqref{Sum in points}, page~\pageref{Sum in points}, in~$\Pt_{1}(\E)$, we now have to prove that $\overline{f}$ and $\overline{g}$ are jointly (strongly) monomorphic when so are $f$ and $g$.
Being given $b$, $b'\in (B\setminus X)\sqcup 1$ such that $\overline{f}(b)=\overline{f}(b')$ and $\overline{g}(b)=\overline{g}(b')$, we shall see that $b=b'$.
Without loss of generality, as follows from $\E$ being lextensive, we may assume that one of the three cases
\begin{enumerate}
\item $b$, $b'\in B\setminus X$;
\item $b\in B\setminus X$ and $b'\in 1$;
\item $b\in 1$ and $b'\in B\setminus X$
\end{enumerate}
is satisfied, of which only the first leads to further work.
Things are fine if either $\overline{f}(b)=\overline{f}(b')$ or $\overline{g}(b)=\overline{g}(b')$ is outside $1$.
When, however, both $\overline{f}(b)=1$ and $\overline{g}(b)=1$, then $f(b)=p's'f(b)=p's(b)=fps(b)$ (because $\overline{f}(b)=1$ means that $f(b)$ is in $X$) and $g(b)=gps(b)$ (for similar reasons), which proves that $b=ps(b)\in X$.
\end{example}

\begin{example}
The category $\Top$ of topological spaces and continuous maps is not coherent, because it is not even regular.

In fact, $\Top$ is not \emph{algebraically} coherent either, since the change-of-base functors of the fibration of points need not preserve regular epimorphisms (which coincide here with strong epimorphisms = quotient maps).
To see this, let us consider the following variation on Counterexample~2.4.5 in~\cite{Borceux:Cats2}. Let $A$, $B$, $C$ and $D$ be the topological spaces defined as follows.
Their underlying sets are $\{a,b,c,d\}$, $\{l,m,n\}$, $\{x,y,z\}$ and $\{i,j\}$ respectively.
The topologies on $A$ and $B$ are generated by $\{\{a,b\}\}$ and $\{\{l,m\}\}$ while the topologies of $C$ and $D$ are indiscrete.
Let $f\colon {A\to C}$, $s\colon{D\to A}$, $p\colon{C\to D}$ and $g\colon{B\to D}$ be the continuous maps defined by:
\begin{center}
\begin{tabular}{c|ccccc}
 & $a$ & $b$ & $c$ & $d$ \\[.5ex]
\hline
 $f$ & $x$ & $y$ & $y$ & $z$
\end{tabular}
\qquad
\begin{tabular}{c|cc}
 & $i$ & $j$\\
\hline
 $s$ & $d$ & $c$ 
\end{tabular}
\qquad
\begin{tabular}{c|ccc}
 & $x$ & $y$ & $z$ \\
\hline
 $p$ & $i$ & $j$ & $i$ 
\end{tabular}
\qquad
\begin{tabular}{c|ccc}
 & $l$ & $m$ & $n$ \\
\hline
 $g$ & $i$ & $i$ & $i$ 
\end{tabular}
\end{center}
Then $f$ is actually a regular epimorphism ${(pf,s)\to (p,fs)}$ in $\Pt_{D}(\Top)$.
However, its image $g^{*}(f)$ by the change-of-base functor $g^{*}\colon {\Pt_{D}(\Top)\to \Pt_{B}(\Top)}$ is a surjection, but not a regular epimorphism.
Indeed, since $A\times_D B$ and $C\times_D B$ have underlying sets $\{a,d\}\times \{l,m,n\}$ and $\{x,z\}\times \{l,m,n\}$, and topologies generated by $\{\{a\}\times\{l,m,n\},\{a,d\}\times \{l,m\}\}$ and $\{\{x,z\}\times \{l,m\}\}$ respectively, it follows that the set
\[
\{a\}\times \{l,m\}=(g^{*}(f))^{-1}(\{x\}\times\{l,m\})
\]
is an open subset of $A\times_D B$ coming from a non-open subset of $C \times_D B$.
This means that $C\times_{D}B$ does not carry the quotient topology induced by $g^{*}(f)$ and so $g^{*}(f)$ is not a regular epimorphism.
\end{example}

It is well known~\cite[Lemma 1.5.13]{Johnstone:Elephant} that \emph{any finitely cocomplete locally cartesian closed category is coherent}.
We find the following algebraic version of this classical result.
We recall from~\cite{Gray2012, Bourn-Gray} that a finitely complete category $\C$ is said to be \defn{locally algebraically cartesian closed} (satisfies condition \defn{(LACC)}) when, for every $f\colon X \to Y$ in \C, the change-of-base functor $f^*\colon \Pt_{Y}(\C)\to \Pt_{X}(\C)$ is a left adjoint. 

\begin{theorem}\label{(LACC) implies (AC)}
Any locally algebraically cartesian closed category is algebraically coherent.
\end{theorem}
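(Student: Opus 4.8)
The plan is to verify, for an arbitrary morphism $f\colon X\to Y$ in $\C$, that the change-of-base functor $f^{*}\colon \Pt_{Y}(\C)\to\Pt_{X}(\C)$ is coherent in the sense of Definition~\ref{Definition Coherent Functor}; by Definition~\ref{Definition Algebraically Coherent Category} this is precisely what algebraic coherence demands. I would split coherence into its two requirements and treat them independently, since only the second one uses the \defn{(LACC)} hypothesis. First, the left-exactness of $f^{*}$ is automatic and needs nothing beyond finite completeness of~$\C$: the fibres $\Pt_{Y}(\C)$ and $\Pt_{X}(\C)$ inherit finite limits from $\C$, the functor $f^{*}$ is computed by pulling back along $f$ inside $\C$, and pullback functors commute with finite limits. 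Hence $f^{*}$ preserves finite limits regardless of any extra hypothesis.

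The substantive part is to show that $f^{*}$ preserves jointly strongly epimorphic pairs, and this is exactly where \defn{(LACC)} enters: by hypothesis $f^{*}$ is a left adjoint. I would then invoke the principle---already relied upon in the proof of Lemma~\ref{Lemma Bemol}---that \emph{a left adjoint preserves jointly strongly epimorphic pairs}. Granting this, $f^{*}$ preserves finite limits and jointly strongly epimorphic pairs, so it is coherent, and as $f$ was arbitrary the category $\C$ is algebraically coherent.

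To justify the principle itself, I would unwind the definition: being a jointly strongly epimorphic pair is the unique lifting property against monomorphisms depicted on the right of Figure~\ref{jeep}. Write $f^{*}\dashv G$ for the adjunction supplied by \defn{(LACC)}. Given a jointly strongly epimorphic pair $(u,v)$ in $\Pt_{Y}(\C)$ together with a lifting problem for $(f^{*}u,f^{*}v)$ against a monomorphism $m$ in $\Pt_{X}(\C)$, I would transpose the whole configuration across the adjunction. Since $G$ is a right adjoint it preserves finite limits, hence monomorphisms, so $G(m)$ is again a monomorphism; the transposed data thus form a lifting problem for $(u,v)$ against $G(m)$, which has a unique solution because $(u,v)$ is jointly strongly epimorphic. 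Transposing that solution back, and using naturality together with the bijectivity of the adjunction correspondence, produces a unique solution to the original problem. This shows $(f^{*}u,f^{*}v)$ is jointly strongly epimorphic.

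I expect no deep obstacle here: in keeping with the formal parallel with topos theory, the statement is essentially a transcription of the classical fact that a finitely cocomplete locally cartesian closed category is coherent. The one point that genuinely requires attention is that I would argue directly from the lifting-property definition of a coherent functor rather than through the coproduct-comparison criterion of Proposition~\ref{Proposition binary sums}: the fibres $\Pt_{X}(\C)$ of a merely finitely complete category need not admit binary coproducts, so that criterion is simply not available, whereas the notion of jointly strongly epimorphic pair---and the transposition argument above---never mentions coproducts. The only remaining work is the bookkeeping of checking that commutativity, existence, and uniqueness all transport faithfully across the adjunction, and each of these follows formally from naturality of the transposition.
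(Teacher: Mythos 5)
Your proposal is correct and follows essentially the same route as the paper: the paper's proof likewise observes that change-of-base functors always preserve finite limits and that under \LACC\ they are left adjoints, hence preserve jointly strongly epimorphic pairs. The only difference is that you spell out, via transposition across the adjunction, the folklore fact that left adjoints preserve jointly strongly epimorphic pairs, which the paper (also in Lemma~\ref{Lemma Bemol}) takes for granted; your transposition argument is sound.
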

\begin{proof}
This is a consequence of the fact that change-of-base functors always preserve limits and under \LACC, since they are left adjoints, they preserve jointly strongly epimorphic pairs.
\end{proof}

\begin{example}
The category of cocommutative Hopf algebras over a field $K$ of characteristic zero is semi-abelian as explained in~\cite{Kadjo, GKV}.
It is also locally algebraically cartesian closed by Proposition~5.3 in~\cite{Gray2012}, being the category of internal groups in the category of cocommutative coalgebras, which is cartesian closed as shown in~\cite[Theorem~5.3]{Barr-Coalgebras}.
Incidentally, via 4.4 in~\cite{BJK}, the same argument suffices to show that the category $\Hopf$ has representable object actions.
\end{example}

\begin{proposition} \label{Prop NatMal}
Any finitely complete naturally Mal'tsev category~\cite{Johnstone:Maltsev} is algebraically coherent.
\end{proposition}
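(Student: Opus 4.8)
The plan is to use the fibrewise description of naturally Mal'tsev categories: a finitely complete category $\C$ is naturally Mal'tsev precisely when every fibre $\Pt_{X}(\C)$ of the fibration of points is an additive category (see~\cite{Johnstone:Maltsev, Borceux-Bourn}). Since every change-of-base functor $f^{*}\colon \Pt_{Y}(\C)\to \Pt_{X}(\C)$ automatically preserves finite limits, proving algebraic coherence amounts to showing that each such $f^{*}$ also preserves jointly strongly epimorphic pairs, that is, that it is coherent in the sense of Definition~\ref{Definition Coherent Functor}.

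First I would reduce this to a single, more manageable condition by means of Proposition~\ref{Proposition binary sums}. Both fibres are additive, so they possess finite biproducts which serve as binary coproducts, and $f^{*}$, preserving finite products between additive categories, is additive and preserves biproducts. Hence the comparison morphism $f^{*}(U)+f^{*}(V)\to f^{*}(U+V)$ is an isomorphism, both sides computing the biproduct $f^{*}(U)\oplus f^{*}(V)$; so the second clause of condition~(ii) of Proposition~\ref{Proposition binary sums} is automatic. It follows that $f^{*}$ is coherent if and only if it preserves strong epimorphisms, and this is the only place where the naturally Mal'tsev hypothesis is needed in an essential way.

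It is convenient to reformulate the surviving condition inside $\C$. By the lifting argument of Lemma~\ref{Lemma Subobject}---a factorisation of a morphism of points through a subobject of $\C$ carries the splittings along with it---a morphism of points is a strong epimorphism in a fibre if and only if it is a strong epimorphism in $\C$. Under this identification $f^{*}(\alpha)$ is nothing but the pullback of $\alpha$ along the projection $X\times_{Y}B\to B$, so the task becomes the stability of strong epimorphisms under these particular pullbacks.

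This stability is the main obstacle, and it is where the full force of the Mal'tsev structure enters. In the pointed case there is nothing to do: a pointed naturally Mal'tsev category is additive, every point over $Y$ splits as the biproduct of its base and its kernel, so $\Pt_{Y}(\C)\simeq \C\simeq \Pt_{X}(\C)$ via the kernel functors and $f^{*}$ is naturally isomorphic to the identity functor. The real content lies in the non-pointed case, where I would establish the required exactness of reindexing by using the natural Mal'tsev operation to present the given strong epimorphism through split epimorphisms and kernels---both of which $f^{*}$ preserves, being additive and left exact---or, equivalently, by appealing to the exactness of the reindexing functors of the additive fibration of points attached to a naturally Mal'tsev category.
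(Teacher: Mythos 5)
Your first half coincides with the paper's proof: by Johnstone's result each fibre $\Pt_X(\C)$ is additive, so its binary coproducts are biproducts, so the left exact functor $f^*$ preserves them and the comparison morphism of Proposition~\ref{Proposition binary sums} is invertible. The paper concludes coherence at this point; you observe---correctly, on a literal reading of condition (ii) of Proposition~\ref{Proposition binary sums}---that preservation of strong epimorphisms still has to be checked. Your reformulation of that condition via Lemma~\ref{Lemma Subobject} (a morphism of points is strongly epimorphic in the fibre if and only if it is so in $\C$) and your complete treatment of the pointed case (where $\Ker$ identifies each fibre with $\C$ and $f^*$ with the identity) are both sound.

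The genuine gap is the step you yourself call ``the real content'', the non-pointed case: you offer two strategies and carry out neither, and in fact neither can work as described. The first---presenting the strong epimorphism ``through split epimorphisms and kernels, both of which $f^*$ preserves, being additive and left exact''---proves too much: any argument invoking only additivity and left exactness of $f^*$ would show that \emph{every} such functor between finitely complete additive categories preserves strong epimorphisms, which is false, since $\mathrm{Hom}(\ZZ/2,-)\colon \Ab\to\Ab$ is additive and left exact yet sends the strong epimorphism $\ZZ\to\ZZ/2$ to $0\to\ZZ/2$. Nor do strong epimorphisms in a finitely complete additive category decompose into split epimorphisms and kernels; they need not even be split, as the cokernel $\prod_{n}\ZZ\to(\prod_{n}\ZZ)/(\bigoplus_{n}\ZZ)$ in the additive category of torsion-free abelian groups shows. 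The second strategy---``appealing to the exactness of the reindexing functors of the additive fibration of points''---is circular: that exactness is precisely what is to be proved. Whatever closes the argument must exploit that $f^*$ is a change-of-base functor of the fibration of points rather than an abstract additive left exact functor. For instance, when $f$ is a split epimorphism with section $s$, coherence of $f^*$ does follow from Lemma~\ref{lemma: unital product with functor coherent} applied to the unital category $\Pt_Y(\C)$, since $f^*$ is then the functor $(X,f,s)\times(-)$ composed with the isomorphism $\Pt_{(X,f,s)}(\Pt_Y(\C))\cong\Pt_X(\C)$; but for an arbitrary morphism $f$ the required stability of strong epimorphisms of points under pullback is exactly what your text leaves unproven.
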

\begin{proof}
If $\C$ is naturally Mal'tsev, then for each object $X$ of $\C$, the category $\Pt_{X}(\C)$ of points over $X$ is naturally Mal'tsev, pointed and finitely complete, hence it is additive by a proposition in~\cite{Johnstone:Maltsev}.
As a consequence, the change-of-base functors $
f^{*}\colon {\Pt_{Y}(\C)\to \Pt_{X}(\C)}$ all preserve binary coproducts and hence are coherent by Proposition~\ref{Proposition binary sums}.
\end{proof}

\begin{examples}\label{linear examples}
The following are algebraically coherent: all abelian categories, all additive categories, all affine categories in the sense of~\cite{Carboni-Affine-Spaces}.
\end{examples}

\begin{proposition}
Let $\C$ be an algebraically coherent Mal'tsev category~\cite{CLP,CPP}. Then, for any $X$ in \C, the category $\Gpd_X(\C)$ of internal groupoids in $\C$ with object of objects~$X$ is algebraically coherent. In particular, the category $\Gp(\C)$ of internal groups in $\C$ is algebraically coherent.
\end{proposition}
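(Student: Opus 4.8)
The plan is to exhibit $\Gpd_X(\C)$ as a full subcategory of the category $\RG_X(\C)$ of internal reflexive graphs with object of objects $X$, closed under finite products and subobjects, and then to invoke Proposition~\ref{Proposition Reflective Subcat}. First I would check that $\RG_X(\C)$ is itself algebraically coherent, without using the Mal'tsev hypothesis. A reflexive graph with object of objects $X$ is an object $(G_1,(d_0,d_1)\colon G_1\to X\times X)$ of the slice $(\C\downarrow(X\times X))$ together with a morphism $e$ from the diagonal object $(X,\Delta_X)$ into it, subject to $(d_0,d_1)e=\Delta_X$; a morphism of reflexive graphs fixing $X$ is exactly a morphism in that slice commuting with the two copies of $e$. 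This identifies $\RG_X(\C)$ with the coslice $((X,\Delta_X)\downarrow(\C\downarrow(X\times X)))$, so two successive applications of Proposition~\ref{Prop Slice-coslice} (first to the slice, then to the coslice) show that $\RG_X(\C)$ is algebraically coherent.

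Next I would use the Mal'tsev hypothesis to control the inclusion $\Gpd_X(\C)\hookrightarrow\RG_X(\C)$. In a Mal'tsev category every internal category is an internal groupoid, the groupoid structure on a reflexive graph is unique when it exists, and every morphism of reflexive graphs between internal groupoids is automatically an internal functor; hence $\Gpd_X(\C)$ is a \emph{full} subcategory of $\RG_X(\C)$. Closure under finite products is straightforward: the terminal object of $\RG_X(\C)$ is the indiscrete groupoid $(X\times X,\mathrm{pr}_0,\mathrm{pr}_1,\Delta_X)$, and the product in $\RG_X(\C)$ of two groupoids is their fibre product over $X\times X$, which again carries the component-wise composition and is therefore a groupoid.

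The main obstacle is closure under subobjects: given a monomorphism $m\colon H\rightarrowtail G$ in $\RG_X(\C)$ with $G$ a groupoid, one must see that $H$ is again a groupoid. Here I would appeal to the structural theory of internal groupoids in Mal'tsev categories: a reflexive graph is a groupoid precisely when the kernel pairs $\mathrm{Eq}(d_0)$ and $\mathrm{Eq}(d_1)$ of its source and target centralise one another, equivalently when their Smith commutator $[\mathrm{Eq}(d_0),\mathrm{Eq}(d_1)]$ is trivial. Since $H$ and $G$ share the object of objects $X$, the relations $\mathrm{Eq}(d_0^H)$ and $\mathrm{Eq}(d_1^H)$ are the restrictions along $m$ of $\mathrm{Eq}(d_0^G)$ and $\mathrm{Eq}(d_1^G)$; by monotonicity of the Smith commutator under inverse image along a monomorphism, $[\mathrm{Eq}(d_0^H),\mathrm{Eq}(d_1^H)]$ is dominated by the restriction of $[\mathrm{Eq}(d_0^G),\mathrm{Eq}(d_1^G)]$, which is the discrete relation, and is therefore itself discrete. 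Thus $H$ is a groupoid, and this is exactly the step where the Mal'tsev condition is indispensable.

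Having verified these three properties, Proposition~\ref{Proposition Reflective Subcat} yields that $\Gpd_X(\C)$ is algebraically coherent. For the final clause I would simply take $X$ to be the terminal object $1$: an internal groupoid with a single object is an internal group, so $\Gp(\C)\simeq\Gpd_1(\C)$ is algebraically coherent as well.
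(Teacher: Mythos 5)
Your route is genuinely different from the paper's: the paper simply invokes Bourn's theorem (Theorem~2.11.6 in~\cite{Borceux-Bourn}) that $\Gpd_X(\C)$ is a naturally Mal'tsev category and concludes by Proposition~\ref{Prop NatMal}---an argument which, remarkably, never uses the algebraic coherence of $\C$ itself. Your plan instead realises $\RG_X(\C)$ as $((X,\Delta_X)\downarrow(\C\downarrow(X\times X)))$, deduces its algebraic coherence from Proposition~\ref{Prop Slice-coslice}, and then tries to descend to $\Gpd_X(\C)$ via Proposition~\ref{Proposition Reflective Subcat}. This is viable in outline: the coslice-of-a-slice identification is correct, fullness of $\Gpd_X(\C)$ in $\RG_X(\C)$ is indeed standard for Mal'tsev categories, and closure under finite products is verified correctly. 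The genuine gap is in the closure-under-subobjects step.

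The proposition assumes only a finitely complete (algebraically coherent) Mal'tsev category, and at that level of generality the Smith commutator you appeal to is not even defined: its construction requires at least regularity and suitable coequalisers, and the equivalence ``a reflexive graph carries a groupoid structure iff $[\mathrm{Eq}(d_0),\mathrm{Eq}(d_1)]$ is discrete'' is a theorem about exact Mal'tsev categories. In the finitely complete setting, ``centralise'' must mean ``a connector exists'', and then your key assertion---monotonicity of the commutator under inverse image along a monomorphism---is not a lemma you can cite; it is precisely the claim at issue, in disguise. Concretely, the connector of the groupoid $G$ is $p(f,g,h)=f\circ g^{-1}\circ h$, and nothing formal guarantees that its restriction to triples of arrows of the reflexive subgraph $H$ takes values in $H_1$: in a Mal'tsev \emph{variety} this holds because a subalgebra is closed under all term operations, but no such argument is available in an abstract Mal'tsev category.

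The step can, however, be repaired by a direct use of the Mal'tsev axiom, after which your proof is complete. Given a monomorphism $m\colon H_1\rightarrowtail G_1$ of reflexive graphs over $X$ with $G$ a groupoid, let $R\leq G_1\times G_1$ be the inverse image of $H_1\leq G_1$ along the morphism $\mathrm{Eq}(d_0)\to G_1$ sending $(u,v)$ to $v\circ u^{-1}$. Since $H$ contains all identities, $R$ is a reflexive relation, hence an equivalence relation because $\C$ is Mal'tsev. Reading everything in terms of generalised elements: for composable arrows $g$, $h$ of $H$ with source $x$, the pairs $(e(x),g)$ and $(g,h\circ g)$ lie in $R$, so transitivity gives $(e(x),h\circ g)\in R$, that is, $h\circ g$ factors through $H_1$; symmetry applied to $(e(x),g)$ gives $g^{-1}\in H_1$. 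The resulting factorisations through the monomorphism $m$ are unique, and the groupoid axioms for $H$ follow from those of $G$. With this inserted, your argument even yields slightly more than the paper's proof does, namely that $\Gpd_X(\C)$ sits inside the algebraically coherent category $\RG_X(\C)$ as a full subcategory closed under finite products and subobjects.
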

\begin{proof}
For any $X$ in \C, $\Gpd_X(\C)$ is a naturally Mal'tsev category by Theorem 2.11.6 in~\cite{Borceux-Bourn}. The result follows from Proposition~\ref{Prop NatMal}.
\end{proof}

Note that some of the results we shall prove in Section~\ref{Section Consequences} apply only to semi-abelian categories, so need not apply to all the examples above. On the other hand, being semi-abelian is not enough for algebraic coherence.

\begin{examples}
Not all semi-abelian (or even strongly semi-abelian) varieties are algebraically coherent.
We list some, together with the consequence of algebraic coherence which they lack: (commutative) loops and digroups (since by the results in~\cite{Borceux-Bourn,Bourn2004,HVdL} they do not satisfy \SH, see Theorem~\ref{(SH) + (NH)} below), non-associative rings (or algebras in general), Jordan algebras (since as explained in~\cite{AlanThesis,CGrayVdL1} they do not satisfy \NH, see Theorem~\ref{(SH) + (NH)}), and Heyting semilattices (which, as explained in~\cite{MFVdL3}, form an arithmetical~\cite{Borceux-Bourn,Pedicchio2} Moore category~\cite{Rodelo:Moore} that does not satisfy \SSH, see Theorem~\ref{(SSH)}).
\end{examples}

In general, (compact) Hausdorff algebras over an algebraically coherent semi-abelian theory are still algebraically coherent.

\begin{theorem}\label{Topological algebras}
Let $\TT$ be a theory such that $\Set^{\TT}$ is an algebraically coherent semi-abelian variety.
Then the homological category $\Haus^\TT$ and the semi-abelian category $\HComp^{\TT}$ are algebraically coherent.
\end{theorem}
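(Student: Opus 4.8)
The plan is to reduce everything to Theorem~\ref{Theorem Bemol}. First I would recall (as is known for topological semi-abelian algebras) that when $\Set^{\TT}$ is semi-abelian, the category $\Haus^{\TT}$ is homological and $\HComp^{\TT}$ is semi-abelian, and that both have binary coproducts; this places us exactly in the setting of Theorem~\ref{Theorem Bemol}, so that algebraic coherence of each is equivalent to the functor $X\flat(-)$ preserving jointly strongly epimorphic pairs, for every object $X$. Throughout I would exploit the forgetful functor $V$ to $\Set^{\TT}$ (forgetting the topology): it is faithful, conservative and preserves finite limits, hence preserves kernels, and in particular sends the defining sequence of $X\flat Y$ to a canonical comparison $VX\flat VY\to V(X\flat Y)$ with the $\flat$ computed in $\Set^{\TT}$. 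The key input is that $\Set^{\TT}$ is \emph{already} algebraically coherent, so by Lemma~\ref{Lemma Bemol} its functors $VX\flat(-)$ preserve jointly strongly epimorphic (equivalently, here, jointly surjective) pairs; the whole difficulty is to transport this across~$V$.

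For $\HComp^{\TT}$ I would first identify the strong epimorphisms: a continuous surjection from a compact space onto a Hausdorff space is closed, hence a quotient map, so the strong ($=$ regular) epimorphisms are exactly the surjections. Consequently a cospan is jointly strongly epimorphic precisely when it is jointly surjective, a condition detected by $V$. Given a jointly surjective cospan $(f,g)$ over $C$, algebraic coherence of $\Set^{\TT}$ makes $(VX\flat Vf, VX\flat Vg)$ jointly surjective onto $VX\flat VC$; pushing this along the comparison $VX\flat VC\to V(X\flat C)$, whose image is dense (the subalgebra generated by the images of $X$ and $Y$ is dense in the compact coproduct), shows that the union of the images of $X\flat f$ and $X\flat g$ is dense in $X\flat C$. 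Since these images are continuous images of compact spaces they are closed, so their union is both closed and dense, hence all of $X\flat C$; thus $(X\flat f, X\flat g)$ is jointly surjective, and Theorem~\ref{Theorem Bemol} applies. (Alternatively, since the counit $LV\To\mathrm{id}$ of the free--forgetful adjunction $L\dashv V$ over $\Set$ consists of surjections, one may instead invoke Lemma~\ref{lemma: covering} to reduce preservation by $X\flat(-)$ to a computation over free objects.)

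The case of $\Haus^{\TT}$ is where the main obstacle lies, and it must be handled differently. Without compactness a continuous bijection need not be an isomorphism, so a surjection need not be a strong epimorphism and ``jointly strongly epimorphic'' is strictly stronger than ``jointly surjective''; moreover continuous images of closed subalgebras need not be closed, so the ``dense $+$ closed $=$ everything'' step above is unavailable. Here I would instead argue through the join characterisation of Corollary~\ref{for join normal}: algebraic coherence amounts to showing that, in every diagram of split extensions of the shape~\eqref{Sum in points}, the kernel $K$ is the join $H\join L$ inside $A+_{X}C$. I would compute this join as the closure of the subalgebra generated by $H\cup L$, compare it with the corresponding join in $\Set^{\TT}$ (which holds because $\Set^{\TT}$ is algebraically coherent), and use that $V$ preserves finite limits and reflects monomorphisms, together with the density of the algebraically generated part, to force the required equality after taking closures.

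The principal obstacle is therefore the comparison between the functor $X\flat(-)$ (equivalently, the coproduct) computed topologically and the one computed in $\Set^{\TT}$: these agree only up to a dense comparison map, never on the nose, as Bohr-type compactifications already show for compact groups. In the compact case this gap closes cleanly, because compactness turns dense images into surjections; in the Hausdorff case, reconciling the topological join with its algebraic counterpart across this dense comparison, in the absence of compactness, is the delicate point on which the proof really turns.
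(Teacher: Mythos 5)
Your argument for $\HComp^{\TT}$ rests on the claim that, since strong epimorphisms there are the surjections, a cospan is jointly strongly epimorphic precisely when it is jointly surjective. That inference is false, and the proof does not survive its failure. By Lemma~\ref{lemma: jointly becomes single}, $(f,g)$ over $C$ is jointly strongly epimorphic if and only if the induced morphism from the coproduct of the domains to $C$ is surjective, and the image of that morphism is not $\Im(f)\cup\Im(g)$ but the closure of the subalgebra this union generates (in $\Set^{\TT}$: the generated subalgebra itself). In compact groups, the inclusions of $S^{1}\times\{1\}$ and of the diagonal into $S^{1}\times S^{1}$ are jointly strongly epimorphic but nowhere near jointly surjective; indeed the coproduct injections $X\to X+Y\leftarrow Y$ are always jointly strongly epimorphic even though their images generate only a dense, generally proper, subalgebra---your own Bohr-type remark. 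This conflation breaks your argument at both ends: Lemma~\ref{Lemma Bemol} applied to $\Set^{\TT}$ yields that the images of $VX\flat Vf$ and $VX\flat Vg$ jointly \emph{generate} $VX\flat VC$, not that they cover it, so after pushing along the comparison what becomes dense in $X\flat C$ is only the subalgebra \emph{generated} by $\Im(X\flat f)\cup\Im(X\flat g)$, which---unlike the two images themselves---has no reason to be closed, so ``closed $+$ dense $=$ everything'' does not apply. Conversely, for a genuinely jointly strongly epimorphic pair in $\HComp^{\TT}$ the pair $(Vf,Vg)$ need not be jointly strongly epimorphic in $\Set^{\TT}$ at all (the images may generate only a dense subalgebra of $VC$), so Set-level coherence cannot be invoked in the form you use; and Theorem~\ref{Theorem Bemol} demands preservation of \emph{all} jointly strongly epimorphic pairs, not just the jointly surjective ones you treat. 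The route can be repaired---restrict to the dense subalgebra generated by the images, apply coherence of $\Set^{\TT}$ there, and prove as a separate lemma that a dense subalgebra of $X+C$ containing $\iota_{X}(X)$ meets the kernel $X\flat C$ in a dense subalgebra of it (this uses the protomodularity operations of the theory)---but none of this is in your text, and you also assert rather than prove the density of the kernel-level comparison $VX\flat VC\to V(X\flat C)$.

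For $\Haus^{\TT}$ your outline misses precisely the point on which the proof turns. As you note yourself, a continuous bijection of Hausdorff algebras need not be an isomorphism, so $V\colon\Haus^{\TT}\to\Set^{\TT}$ is not conservative; hence knowing that the \emph{underlying algebra} of the join of $H$ and $L$ is that of $K$ cannot establish the condition of Corollary~\ref{for join normal}: one must exclude monomorphisms $M\to K$ that are continuous bijections from strictly finer Hausdorff topologies through which $H$ and $L$ both lift, and ``taking closures'' cannot see these---closures are in fact beside the point, since $K$ is a kernel and kernels of Hausdorff algebras are closed. The missing step is purely topological and is the entire content of the paper's proof: for $S\subset K$ open in the final topology determined by $H,L\subset K$, closedness of $H$ in $A$ and of $L$ in $C$ makes $(H\cap S)\cup(A\setminus H)$ and $(L\cap S)\cup(C\setminus L)$ open, whence $S\cup((A+_{X}C)\setminus K)$ is open in $A+_{X}C$ (which carries the final topology); thus the subspace and final topologies on $K$ coincide, any monomorphism as above is invertible, and $K=H\join L$, so that Proposition~\ref{AC through sum} applies. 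Note that this single argument uses no compactness and settles $\Haus^{\TT}$ and $\HComp^{\TT}$ simultaneously, whereas your two-track proposal leaves the compact track broken and the Hausdorff track without its key step.
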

\begin{proof}
According to~\cite{Borceux-Clementino} the category $\Haus^\TT$ is homological and $\HComp^\TT$ is semi-abelian. This means that we may use Proposition~\ref{AC through sum} to show their algebraic coherence.
Let us consider a diagram like~\eqref{Sum in points} in $\Haus^\TT$.
Since $\Set^\TT$ is algebraically coherent, we know that the underlying algebras are such that $H\join L=K$. 
Given a subset $S\subset K$ which is open in the final topology on~$K$ induced by $H$, $L\subset K$, we have to prove that $S$ is open in the subspace topology induced by $K\subset A+_{X}C$.
By definition, $H\cap S$ and $L\cap S$ are open in $H$ and in~$L$, respectively.
Since for Hausdorff algebras kernels are closed~\cite[Proposition~26]{Borceux-Clementino}, $(H\cap S)\cup (A\setminus H)$ and $(L\cap S)\cup (C\setminus L)$ are open in $A$ and in $C$, respectively.
Hence $S\cup ((A+_{X}C) \setminus K)$ is open in $A+_{X}C$, which carries the final topology.

Since limits in $\HComp$ are computed again as in $\Top$, this proof also works for compact Hausdorff algebras. 
\end{proof}
It would be interesting to know whether or not the same result holds for topological spaces.

To make full use of this result, we need further examples of algebraically coherent semi-abelian varieties of algebras.
One class of such are the \emph{categories of interest} in the sense of~\cite{Orzech}.


\begin{definition}\label{cat.int}
A \defn{category of interest} is a variety of universal algebras whose theory contains a unique constant $0$, a set $\Omega$ of finitary operations and a set of identities $\EE$ such that:
\begin{enumerate}
	\item[\CI1] $\Omega=\Omega_0\cup\Omega_1\cup\Omega_2$, where $\Omega_i$ is the set of $i$-ary operations;
	\item[\CI2] $\Omega_0=\{0\}$, $-\in\Omega_1$ and $+\in\Omega_2$, where $\Omega_i$ is the set of $i$-ary operations, and $\EE$ includes the group laws for $0$, $-$, $+$; define $\Omega_1'=\Omega_1\setminus\{-\}$, $\Omega_2'=\Omega_2\setminus\{+\}$;
	\item[\CI3] for any $*\in\Omega_2'$, the set $\Omega_2'$ contains $*^{\op}$ defined by $x*^{\op} y=y*x$;
	\item[\CI4] for any $\omega\in\Omega_1'$, $\EE$ includes the identity $\omega(x+y)=\omega(x)+\omega(y)$;
	\item[\CI5] for any $*\in\Omega_2'$, $\EE$ includes the identity $x*(y+z)=x*y+x*z$;
	\item[\CI6] for any $\omega\in\Omega_1'$ and $*\in\Omega_2'$, $\EE$ includes the identity $\omega(x)*y=\omega(x*y)$;
	\item[\CI7] for any $*\in\Omega_2'$, $\EE$ includes the identity $x+(y*z)=(y*z)+x$;
	\item[\CI8] for any $*$, $\divideontimes\in\Omega_2'$, there exists a word $w$ such that $\EE$ includes the identity
		$$ (x*y)\divideontimes z=w(x*_1(y\divideontimes_1 z),\ldots, x*_m(y\divideontimes_m z),y*_{m+1}(x\divideontimes_{m+1} z),\ldots,y*_n(x\divideontimes_n z)) $$
		where $*_1,\ldots,*_n$ and $\divideontimes_1,\ldots,\divideontimes_n$ are operations in $\Omega_2'$.
\end{enumerate}
\end{definition}

The following lemma expresses the well-known equivalence between split epimorphisms and actions~\cite{Bourn-Janelidze:Semidirect,BJK} in the special case of a category of interest: here an internal $B$-action on an object is determined by a set of additional operations~$u_{b,*}$, one for each element $b$ of $B$ and each binary operation $*$. 

\begin{lemma}
\label{lemma:cat.int.decomp}
Let $\C$ be a variety of universal algebras whose theory contains a unique constant $0$, a set of finitary operations $\Omega$, and a set of identities $\EE$ such that \CI1--\CI5 of Definition~\ref{cat.int} hold.
For every $B$ in $\C$ define $\C_B$ to be a new variety whose theory contains a unique constant $0$, a set of finitary operations $\Omega_B$, and a set of identities $\EE_B$ such that:
\begin{enumerate}
\item $\Omega_B = \Omega_{B_0}\cup \Omega_{B_1} \cup \Omega_{B_2}$, where $\Omega_{B_i}$ is the set of $i$-ary operations;
\item $\Omega_{B_0} = \Omega_0$, $\Omega_{B_2}=\Omega_2$ and $\Omega_{B_1}= \Omega_1 \sqcup \Theta_1$ where $\Theta_1=\{u_{b,*}\mid b\in B, * \in \Omega_2\}$;
\item $\EE_B$ has the same identities as in $\EE$ but in addition for each $u_{b,*}$ in $\Theta_1$ the identity $u_{b,*}(x+y)=u_{b,*}(x) + u_{b,*}(y)$. 
\end{enumerate}
The functor $I_B\colon \Pt_B(\C)\to\C_B$ sending a split epimorphism
\[
\xymatrix{
A \ar@<0.5ex>@{-{ >>}}[r]^{\alpha} & B \ar@<0.5ex>[l]^{\beta}
}
\]
to the kernel of $\alpha$ with all operations induced by those on $A$ except for the unary operations $u_{b,*}$ which are defined by
\[
u_{b,*}(x) = \begin{cases}
\beta(b) + x - \beta(b) & \qquad \text{if $*=+$}\\
\beta(b)*x & \qquad \text{otherwise} 
\end{cases}
\]
is such that $\overline{\C_B} = I_B(\Pt_B(\C))$ is a subvariety of $\C_B$ and $I_{B}\colon {\Pt_{B}(\C)\to \overline{\C_B}}$ is an equivalence of categories. 

Moreover if conditions \CI6--\CI8 of Definition~\ref{cat.int} also hold, then for every $n$-ary word $w$ of $\overline{\C_B}$ there exists an $m$-ary word $w'$ of $\C$ and unary words $v_{i,1}$, $v_{i,2}$, \dots, $v_{i,{m_i}}$ of $\overline{\C_B}$ for each $i$ in $\{1,\dots,n\}$ such that
\begin{align*}
w(x_1,\dots,x_n) &= w'(v_{1,1}(x_1),\dots,v_{1,m_1}(x_1),\\
 &\phantom{= w'(}v_{2,1}(x_2),\dots,v_{2,m_2}(x_2),\dots,v_{n,1}(x_n),\dots,v_{n,m_n}(x_n)).
\end{align*} 
\end{lemma}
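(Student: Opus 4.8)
The statement bundles two claims, and I would prove them with different tools. The equivalence $I_B\colon\Pt_B(\C)\to\overline{\C_B}$ relies only on \CI1--\CI5 and is an instance of the semidirect product/action correspondence; the ``Moreover'' decomposition additionally uses \CI6--\CI8 and is a linearisation of words carried out by a well-founded induction.

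For the equivalence, I would first verify that $I_B$ is well defined. Since $\alpha$ is a homomorphism, $\Ker(\alpha)$ is closed under every operation of $\Omega$: for $\omega\in\Omega_1'$ one has $\omega(0)=0$ and for $*\in\Omega_2'$ one has $0*0=0$, both forced by \CI4 and \CI5, so the induced $\Omega$-structure on $K$ lands in $\C$, while the $u_{b,*}$ are additive (conjugation because $+$ is a group law, and $u_{b,*}$ for $*\in\Omega_2'$ precisely by \CI5). To produce a pseudo-inverse I would build $J_B\colon\overline{\C_B}\to\Pt_B(\C)$ by semidirect product, setting $J_B(K)=B\ltimes K$: on the underlying set $B\times K$ the group operation is twisted by the conjugation operations $u_{b,+}$, and each $*\in\Omega_2'$ and $\omega\in\Omega_1$ is defined by expanding along the distributive laws (\CI5, and \CI3 for the opposite side) and routing the mixed terms through the appropriate $u_{b,*}$. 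Demanding that $B\ltimes K$ satisfy the identities $\EE$ of $\C$ translates into a finite list of identities between the $u_{b,*}$ and the operations of $\C$; adjoining these to $\EE_B$ cuts out precisely the subvariety $\overline{\C_B}$, the image $\overline{\C_B}=I_B(\Pt_B(\C))$ is exactly this subvariety, and by construction $I_B J_B\cong\mathrm{id}$ and $J_B I_B\cong\mathrm{id}$, so $I_B$ is an equivalence onto it.

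For the ``Moreover'' part, note first that the only operations which must be eliminated from the outer word $w'$ are those of $\Theta_1$, since $\Omega_1\cup\Omega_2$ may appear freely in a $\C$-word; the $u_{b,*}$ must instead be driven into the per-variable unary words $v_{i,j}$. I would proceed by structural induction on $w$ using three families of identities valid in $\overline{\C_B}$. First, the group laws, the additivity of $\omega$ (\CI4), the bi-additivity of the binary operations (\CI5 with \CI3) and the additivity of each $u_{b,*}$ reduce $w$ to a finite $\pm$-sum of \emph{monomials} --- terms with no internal $+$, $-$ or $0$ --- while centrality of products (\CI7) allows these summands to be reordered; since the target form is stable under sums, it suffices to treat a single monomial. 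Second, \CI6 lets me slide the operations $\omega\in\Omega_1'$ across binary operations and, read in a semidirect product, yields identities such as $\omega(u_{b,*}(x))=u_{\omega(b),*}(x)$, collapsing nested compositions of $\omega$'s and $u$'s into single unary words. Third, and decisively, \CI8 rewrites a product $(x*y)\divideontimes z$ as an $\Omega$-word in terms $x*_i(y\divideontimes_i z)$ and $y*_j(x\divideontimes_j z)$; specialising the outer factor to a ``constant'' $\beta(b)$ converts this, inside $\overline{\C_B}$, into an identity expressing $u_{b,\divideontimes}(x*y)$ as an $\Omega$-word in terms of the shape $x*u_{b,\star}(y)$ and $y*u_{b,\star}(x)$ for various $*,\star\in\Omega_2'$. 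This last identity is the engine that pushes each $u_{b,*}$ off a compound argument and onto a strictly smaller one, eventually onto a single variable, absorbing the constants into the $v_{i,j}$ and leaving an $\Omega$-word $w'$ as skeleton.

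The main difficulty is to guarantee termination, since each use of the \CI8 identity reintroduces sums and thus forces a return to the monomial expansion: the three rule families must be interleaved. I would control the process by a Dershowitz--Manna multiset measure recording, for each occurrence of a $\Theta_1$-operation, the size of the subterm to which it is applied; the \CI8 step replaces a $u$ on a compound subterm by finitely many $u$'s on strictly smaller subterms, so this multiset strictly decreases, while the distributive and \CI6 reductions do not increase it. The delicate bookkeeping --- checking that the $\Omega$-word produced by \CI8 never places a $\Theta_1$-operation back in the skeleton, and that the measure genuinely decreases under the combined system --- is where the real work lies; once it is in place, each fully reduced monomial has the required shape $w'(\ldots v_{i,j}(x_i)\ldots)$ with $w'$ an $\Omega$-word and each $v_{i,j}$ a unary word in a single variable, and recombining the monomials gives the stated decomposition.
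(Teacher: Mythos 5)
Your first part is correct and is, in substance, the paper's own construction: the same semidirect-product formulas on $B\times K$ underlie both arguments. You package it differently---you characterise the image $\overline{\C_B}$ as the class of those $K$ for which $B\ltimes K$ satisfies $\EE$ (which unfolds to a set of $\C_B$-identities, one for each identity in $\EE$ and each tuple of elements of $B$; in general this is not a \emph{finite} list, but that is harmless) and you exhibit $B\ltimes(-)$ as a quasi-inverse, whereas the paper proves $I_B$ full and faithful and its image closed under limits, subalgebras and quotients. Both routes rest on the same verifications, namely that $(x,b)\mapsto x+\beta(b)$ is an isomorphism and that $f\times 1_B$ is a homomorphism.

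The ``Moreover'' part has a genuine gap, exactly at the point you flag. The claim that ``the distributive and \CI6 reductions do not increase'' your multiset measure is false: distributing $u_{c,\star}(x)\divideontimes(y+z)$ into $(u_{c,\star}(x)\divideontimes y)+(u_{c,\star}(x)\divideontimes z)$ duplicates the occurrence of $u_{c,\star}$, turning the multiset $\{|x|\}$ into $\{|x|,|x|\}$, a strict increase in the Dershowitz--Manna order. Such steps are forced in your scheme: each application of your \CI8 engine re-creates sums deep inside a monomial (the word $w$ of \CI8 may contain $+$), and re-expanding them duplicates sibling factors which may contain $\Theta_1$-operations with arbitrarily large arguments---possibly larger than the argument of the $u$ you just eliminated---so the combined step is not a decrease for this measure, and the interleaved rewriting system has no termination proof as proposed. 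The paper's proof is organised precisely so that this cannot happen: it first shows (using \CI2, \CI4, \CI5, \CI6) that every $\Omega$-word is an $\Omega_2$-word applied to $\Omega_1$-unary words of single variables, and only then pushes one $u_{b,*}$ at a time through a word of that shape, by structural induction on its $\Omega_2$-skeleton, with the unary words at the leaves treated as atoms. Since the outer induction on the given $\C_B$-word normalises the argument of each $\Theta_1$-operation \emph{before} that operation is pushed down (innermost first), every duplication created by \CI8 or by distributivity copies only $u$'s that already sit on single variables, and termination is automatic. That reorganisation, not mere bookkeeping, is the missing idea.

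A second, smaller omission: your \CI8 engine applies only to $u_{b,\divideontimes}$ with $\divideontimes\neq+$, since only then is $u_{b,\divideontimes}(t)=\beta(b)\divideontimes t$. For the conjugations $u_{b,+}$ applied to a product monomial one needs the identity $u_{b,+}(x\divideontimes y)=x\divideontimes y$, which follows from \CI7 (centrality of products); you invoke \CI7 only to reorder summands, so this case is not covered by any of your three rule families.
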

\begin{proof}
For a semi-abelian category, kernel functors are always faithful, since they preserve equalisers and reflect isomorphisms.
Hence the functor $I_B$ is faithful too, because the kernel functor factors through it.
Since the kernel functor reflects limits, being conservative by protomodularity, it follows that $I_B$ does too.
This proves that $\Pt_B(\C)$ is closed under limits in $\C_B$.

For each $X$ in $\C_B$ we can define all operations in $\Omega$ on the set $X\times B$ as follows:
\begin{align*}
0 &= (0,0) &&\text{is the unique constant}\\
u(x,b) &= (u(x),u(b)) && \text{for each $u$ in $\Omega'_1$}\\
-(x,b) &= (u_{-b,+}(-x),-b)\\
(x,b) + (y,c) &= (x+u_{b,+}(y),b+c) &&\\
(x,b)*(y,c) &= (x*y + u_{b,*}(y) + u_{c,*^{\op}}(x), b*c) && \text{for each $*$ in $\Omega'_2$.} 
\end{align*}
The set $X\times B$ equipped with these operations becomes an object of \C, and the maps $\pi_2 \colon X\times B \to B$ and $\gauche 0,1_{B}\droite \colon {B\to X\times B}$ are morphisms in \C.
If 
\[
\xymatrix{
X=I_B(A \ar@{-{ >>}}@<0.5ex>[r]^-{\alpha} & B \ar@<0.5ex>[l]^-{\beta})
}
\]
then the map $\varphi \colon X\times B \to A$ defined by $\varphi(x,b) = x + \beta(b)$ is a bijection which preserves all operations.
Indeed
\begin{align*}
\varphi(u(x,b)) &= \varphi(u(x),u(b))
= u(x) + \beta(u(b))
= u(\varphi(x,b))\\
&\\
\varphi((x,b)+(y,c)) &= x+u_{b,+}(y) + \beta(b+c)\\
&= x + \beta(b) + y- \beta(b) + \beta(b) + \beta(c)\\
&=\varphi(x,b)+ \varphi(y,c)
\end{align*}
\begin{align*}
\varphi((x,b)*(y,c))&= x*y + u_{b,*}(y) + u_{c,*^{\op}}(x) + \beta(b*c)\\
&=x*y + \beta(b)*y + x*\beta(c) + \beta(b)*\beta(c)\\
&=(x+\beta(b))*(y+\beta(c))\\
&=\varphi(x,b)*\varphi(y,c).
\end{align*}
Next we will show that for each $f\colon X\to X'$ in $\C_B$ the map $f\times 1_{B } \colon X\times B \to X'\times B$ which trivially makes the diagram
\[
\xymatrix{
X \ar[r]^-{\gauche 1_{X},0\droite} \ar[d]_-{f} & X\times B \ar@<0.5ex>[r]^-{\pi_2}\ar[d]^-{f\times 1_{B}} & B\ar@{=}[d] \ar@<0.5ex>[l]^-{\gauche 0,1_{B}\droite}\\
X' \ar[r]_-{\gauche 1_{X'},0\droite} & X'\times B \ar@<0.5ex>[r]^-{\pi_2} & B \ar@<0.5ex>[l]^-{\gauche 0,1_{B}\droite}
}
\]
commute also preserves the operations defined above.
We have
\begin{align*}
(f\times 1_{B})(u(x,b)) &= (f\times 1_{B})(u(x),u(b))
=(f(u(x)),u(b))\\
&=u((f\times 1_{B})(x,b))
\end{align*}
\begin{align*}
(f\times 1_{B})((x,b)+(y,c))&= (f\times 1_{B})(x + u_{b,+}(y),b+c)\\
&=(f(x+u_{b,+}(y)),b+c)\\
&=(f(x),b)+(f(y),c)\\
&=(f\times 1_{B})(x,b)+(f\times 1_{B})(y,c)
\end{align*}
\begin{align*}
(f\times 1_{B})((x,b)*(y,c)) &= (f\times 1_{B})(x*y+u_{b,*}(y) + u_{c,*^{\op}}(x),b*c)\\
&=(f(x*y + u_{b,*}(y) + u_{c,*^{\op}}(x)),b*c)\\
&=(f(x),b)*(f(y),c)\\
&=(f\times 1_{B})(x,b)*(f\times 1_{B})(y,c).
\end{align*}

This means that $I_B$ is full, and also that $\Pt_B(\C)$ is closed under monomorphisms and quotients in $\C_B$.
Indeed, $f\times 1_{B}$ is a monomorphism or a regular epimorphism as soon as $f$ is. 

It is easy to check that 
\begin{align*}
0+x&=x && \text{using \CI2} \\
0*x&=0 && \text{when $*\neq +$ using \CI5}\\
-(x+y) &= (-y)+ (-x) && \text{using \CI2}\\
-(x*y) &= (-x)*y && \text{when $*\neq+$ using \CI2, \CI2 and \CI5}
\end{align*}
and for each $u$ in $\Omega'_1$
\begin{align*}
u(x+y) &= u(x) + u(y) && \text{using \CI4}\\
u(x*y) &= u(x)*y && \text{using \CI6}
\end{align*}
which means that for each $n$-ary word $w$ from $\C$ there exists an $n$-ary word $w'$ built using only operations from $\Omega_2$, and unary words $v_1$, \dots, $v_n$ which are composites of operations from $\Omega_1$ such that $w(x_1,\dots,x_n) = w'(v_1(x_1),\dots,v_n(x_n))$. 
It is also easy to check that for each $u_{b,*}$ in $\Theta_1$
\begin{align*}
u_{b,*}(x+y) &= u_{b,*}(x) + u_{b,*}(y) && \text{using \CI2 for $*=+$ and \CI5 otherwise}\\
u_{b,+}(x\divideontimes y) &= x\divideontimes y && \text{when $\divideontimes\neq +$ using $\CI2$, $\CI7$.}
\end{align*}
When $*\neq +$ and $\divideontimes \neq +$, according to \CI3 and \CI8 and what was proved above, there exists a word $w$ built using only operations from $\Omega_2$ and unary words $v_1$, \dots, $v_n$ which are composites of operations from $\Omega_1$ such that
\begin{align*}
u_{b,*}(x\divideontimes y) = w\bigl(&v_1(x\divideontimes_1(u_{b,*_1}(y))),\dots,v_{m}(x\divideontimes_m(u_{b,*_m}(y))),\\
&v_{m+1}(y \divideontimes_{m+1} (u_{b,*_{m+1}}(x))),\dots,v_n(y_n\divideontimes_n(u_{b,*_n}(x)))\bigr)\\
= w\bigl(&x\divideontimes_1(u_{b,*_1}(v_1(y))),\dots,x\divideontimes_m(u_{b,*_m}(v_{m}(y))),\\
&y \divideontimes_{m+1} (u_{b,*_{m+1}}(v_{m+1}(x))),\dots,y_n\divideontimes_n(u_{b,*_n}(v_n(x)))\bigr).
\end{align*}
The final claim follows by induction.
\end{proof}

\begin{lemma}\label{lemma:nice forgetful functors}
Let $U \colon \B \to \C$ be a forgetful functor between varieties (meaning that the operations and identities of $\C$ are amongst those of $\B$) such that for each $n$-ary word $w$ in $\B$ there exists an $m$-ary word $w'$ in $\C$ and unary words $v_{i,1}, v_{i,2}, \dots,v_{i,{m_i}}$ in $\B$ for each $i\in \{1,\dots,n\}$ satisfying
\begin{align*}
w(x_1,\dots,x_n) = w'(&v_{1,1}(x_1),\dots,v_{1,m_1}(x_1),\\
 &v_{2,1}(x_2),\dots,v_{2,m_2}(x_2),\dots,v_{n,1}(x_n),\dots,v_{n,m_n}(x_n)).
\end{align*} 
The functor $U$ is coherent.
\end{lemma}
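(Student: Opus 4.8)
The plan is to apply Proposition~\ref{Proposition binary sums}. Since $\B$ and $\C$ are varieties, both have finite limits and binary coproducts, so the proposition is available. First I would observe that $U$ is left exact: a forgetful functor between varieties has a left adjoint (the free-algebra functor) and, in any case, limits in a variety are computed on underlying sets, so $U$ preserves them. It therefore remains to check condition (ii) of Proposition~\ref{Proposition binary sums}, namely that $U$ preserves strong epimorphisms and that the comparison morphism
$\mus{U(\iota_X)}{U(\iota_Y)}\colon U(X)+U(Y)\to U(X+Y)$
is a strong epimorphism for all $X$, $Y\in\B$.

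The preservation of strong epimorphisms is immediate: in a variety the strong epimorphisms are exactly the surjective homomorphisms, and $U$ leaves the underlying map of a homomorphism unchanged, so it preserves surjectivity. The real content is the comparison morphism, whose image is precisely the $\C$-subalgebra of $U(X+Y)$ generated by $\Im(U(\iota_X))\cup\Im(U(\iota_Y))$; thus it suffices to prove that this subalgebra is all of $U(X+Y)$. Here I would use that the coproduct $X+Y$ in the variety $\B$ is generated, as a $\B$-algebra, by $\iota_X(X)\cup\iota_Y(Y)$, so any element $z\in X+Y$ can be written as $z=w(a_1,\dots,a_n)$ for some $n$-ary $\B$-word $w$ and elements $a_i\in\iota_X(X)\cup\iota_Y(Y)$. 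Applying the hypothesised decomposition to $w$ rewrites this as $z=w'(\dots,v_{i,j}(a_i),\dots)$, with $w'$ a $\C$-word and each $v_{i,j}$ a \emph{unary} $\B$-word. The crucial point is that each argument $v_{i,j}(a_i)$ is a unary operation applied to a single generator: if $a_i=\iota_X(x_i)$ then, since $\iota_X$ is a $\B$-homomorphism, $v_{i,j}(a_i)=\iota_X(v_{i,j}(x_i))\in\iota_X(X)$, and symmetrically on the $Y$-side. Hence $z$ is expressed as a $\C$-word in elements of $\iota_X(X)\cup\iota_Y(Y)$, so it lies in the generated $\C$-subalgebra. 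This shows the comparison morphism is surjective, i.e.\ a strong epimorphism, and Proposition~\ref{Proposition binary sums} then yields that $U$ is coherent.

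I expect the main obstacle to be purely the bookkeeping around the comparison morphism rather than anything conceptual: one must know that the coproduct in $\B$ is generated by the two injection images and that its elements are represented by $\B$-terms, and then exploit the fact that in the hypothesised decomposition every variable is acted on only by unary words. This is exactly what makes the decomposition compatible with the partition of the generators into the $X$-part and the $Y$-part, so that a $\B$-term in mixed generators becomes a $\C$-term in generators each of which still belongs to $\iota_X(X)$ or to $\iota_Y(Y)$. Everything else—left exactness and preservation of surjections—is routine. (Equivalently, one could argue directly without invoking coproducts, by noting that in a variety a cospan is jointly strongly epimorphic precisely when its codomain is the subalgebra generated by the union of the two images, and then running the same term-decomposition argument on an arbitrary jointly strongly epimorphic cospan; this avoids Proposition~\ref{Proposition binary sums} but rests on the identical computation.)
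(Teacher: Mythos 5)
Your proposal is correct and follows essentially the same route as the paper's own proof: both express an arbitrary element of $U(X+Y)$ as a $\B$-word in generators coming from $X$ and $Y$, apply the hypothesised decomposition to rewrite it as a $\C$-word whose arguments are unary $\B$-words of single generators (and hence still lie in $\iota_X(X)$ or $\iota_Y(Y)$), and conclude via Proposition~\ref{Proposition binary sums} that the comparison morphism $U(X)+U(Y)\to U(X+Y)$ is surjective, hence a strong epimorphism. The only difference is that you spell out the routine verifications (left exactness and preservation of surjections) that the paper leaves implicit.
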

\begin{proof}
Since every element of $U(X+Y)$ is of the form $a=w(x_1,\dots,x_k,y_{k+1}\dots,y_n)$ for some $n$-ary word $w$ from $\B$, where $x_1$, \dots, $x_k$ are in $X$ and $y_{k+1}$, \dots, $y_n$ are in~$Y$, it follows by assumption that there exist a word $w'$ from $\C$ and $v_{i,1}$, $v_{i,2}$, \dots, $v_{i,{m_i}}$ for each $i$ in $\{1,\dots,n\}$ in $\B$ such that $w(x_1,\dots,x_k,y_{k+1},\dots,y_n)$ equals
\begin{align*}
w'(&v_{1,1}(x_1),\dots,v_{1,m_1}(x_1), v_{2,1}(x_2),\dots,v_{2,m_2}(x_2),\dots,v_{k,1}(x_k),\dots,v_{k,m_k}(x_k),\\
	&v_{k+1,1}(y_{k+1}),\dots,v_{k+1,m_{k+1}}(y_{k+1}),\dots,v_{n,1}(y_n),\dots,v_{n,m_n}(y_n)).
\end{align*} 
Therefore, since each $v_{i,m_i}(x_i)$ is in $X$ and each $v_{i,m_i}(y_i)$ is in $Y$ it follows that $a$ is in the image of
\[
\mus{U(\iota_X)}{U(\iota_Y)}\colon U(X)+U(Y) \to U(X+Y)
\]
and so $U$ is coherent by Proposition~\ref{Proposition binary sums}. 
\end{proof}

\begin{theorem}\label{theorem:ci.jse}
Every \emph{category of interest} in the sense of Orzech is algebraically coherent.
\end{theorem}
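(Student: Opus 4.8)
The plan is to verify algebraic coherence of a category of interest $\C = \Set^{\TT}$ by checking condition (ii) of Proposition~\ref{Proposition binary sums} applied to the kernel functors $\Ker \colon \Pt_{B}(\C) \to \C$, invoking Proposition~\ref{Proposition Characterisation kernel functor} to reduce from arbitrary change-of-base functors to these (since a category of interest is semi-abelian, hence pointed protomodular with coproducts). The key observation is that Lemma~\ref{lemma:cat.int.decomp} has already done the heavy lifting: it establishes an equivalence $I_B \colon \Pt_B(\C) \to \overline{\C_B}$, where $\overline{\C_B}$ is a subvariety of the variety $\C_B$, and moreover that every word of $\overline{\C_B}$ decomposes in the ``separated-variables'' form required by the hypothesis of Lemma~\ref{lemma:nice forgetful functors}.

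First I would set up the relevant functors. For a fixed object $B$, the kernel functor $\Ker \colon \Pt_B(\C) \to \C$ factors (up to the equivalence $I_B$) through the forgetful functor $U \colon \overline{\C_B} \to \C$ that discards the extra unary operations $u_{b,*}$ of $\Theta_1$ and remembers only the original $\Omega$-structure. Since $I_B$ is an equivalence, it is coherent (it preserves finite limits and jointly strongly epimorphic pairs, being full, faithful and essentially surjective), so $\Ker$ is coherent precisely when $U$ is.

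Next I would check that $U \colon \overline{\C_B} \to \C$ satisfies the hypotheses of Lemma~\ref{lemma:nice forgetful functors}. This forgetful functor is of the required type: the operations and identities of $\C$ sit among those of $\overline{\C_B}$, so the only thing to confirm is the word-decomposition condition, namely that each $n$-ary word $w$ of $\overline{\C_B}$ can be written as $w'$ applied to unary words $v_{i,j}$ of $\overline{\C_B}$ each depending on a single variable, with $w'$ a word of $\C$. But this is exactly the ``Moreover'' clause of Lemma~\ref{lemma:cat.int.decomp}, which holds because conditions \CI6--\CI8 are part of the definition of a category of interest. Lemma~\ref{lemma:nice forgetful functors} then yields that $U$, and hence $\Ker$, is coherent.

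Finally I would assemble the conclusion: since every kernel functor $\Ker \colon \Pt_B(\C) \to \C$ is coherent, Proposition~\ref{Proposition Characterisation kernel functor} tells us that $\C$ is algebraically coherent. I expect the main obstacle to be entirely bookkeeping rather than conceptual: one must carefully match the variable $B$, the equivalence $I_B$, and the forgetful functor $U$ so that the composite agrees with the genuine kernel functor, and confirm that $\overline{\C_B}$ really is a \emph{sub}variety (so that coherence of $U$ on $\C_B$ restricts correctly)---but all the substantive algebra, especially the delicate word decomposition forced by the associator-type identity \CI8, has already been discharged in Lemma~\ref{lemma:cat.int.decomp}.
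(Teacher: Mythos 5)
Your proposal is correct and follows essentially the same route as the paper: the paper's proof is precisely the observation that each kernel functor $\Pt_B(\C)\to\C$ factors as the equivalence $I_B\colon \Pt_B(\C)\to\overline{\C_B}$ of Lemma~\ref{lemma:cat.int.decomp} followed by the forgetful functor $U\colon\overline{\C_B}\to\C$, which is coherent by Lemma~\ref{lemma:nice forgetful functors} thanks to the word decomposition in the ``moreover'' clause, and then one concludes via Proposition~\ref{Proposition Characterisation kernel functor}. Your additional bookkeeping remarks (coherence of equivalences, the subvariety point) are accurate but already discharged by the cited lemmas.
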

\begin{proof}
The proof is a consequence of Lemma~\ref{lemma:cat.int.decomp} together with Lemma~\ref{lemma:nice forgetful functors} because any kernel functor ${\Pt_{B}(\C)\to \C}$ factors into an equivalence $I_{B}\colon {\Pt_{B}(\C)\to \overline{\C_{B}}}$ followed by a coherent functor $U\colon{\overline{\C_{B}}\to \C}$.
\end{proof}

\begin{examples}
The categories of groups and non-unital (Boolean) rings are algebraically coherent semi-abelian categories, as are the categories of associative algebras, Lie algebras, Leibniz algebras, Poisson algebras over a commutative ring with unit, all \emph{varieties of groups} in the sense of~\cite{Neumann}, and all categories of compact Hausdorff such algebras.
\end{examples}

\begin{proposition}
If $\C$ is a semi-abelian algebraically coherent category and $X$ is an object of $\C$, then the category $\Act_{X}(\C)=\C^{X\flat (-)}$ of $X$-actions in $\C$ is semi-abelian algebraically coherent. 
\end{proposition}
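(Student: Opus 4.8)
The plan is to reduce the whole statement to the fibre $\Pt_X(\C)$, which has already been handled. The key observation is that, since $\C$ is semi-abelian, the comparison functor $\Pt_X(\C) \to \C^{X\flat(-)} = \Act_X(\C)$ is an equivalence of categories (see \cite{Bourn-Janelidze:Semidirect, BJK}). Since both algebraic coherence and the property of being semi-abelian are invariant under equivalence of categories, it suffices to establish the two assertions for $\Pt_X(\C)$ in place of $\Act_X(\C)$, and then transport them along the equivalence.

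First I would note that $\Pt_X(\C)$ is algebraically coherent: this is exactly Corollary~\ref{Corollary Fibres}, which states that every fibre of an algebraically coherent category is again algebraically coherent. So this half costs nothing beyond recording the equivalence.

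It then remains to check that $\Pt_X(\C)$ is semi-abelian. Here I would verify the four defining properties. It is pointed, with zero object the identity point $(1_X \colon X \to X, 1_X \colon X \to X)$: for an arbitrary point $(p\colon A \to X, s)$ the unique morphism of points to the zero object is $p$ and the unique one from it is $s$, and these make it a zero object. Finite limits are computed over the fixed base $X$ exactly as in $\C$, so $\Pt_X(\C)$ is finitely complete; protomodularity, regularity and Barr-exactness all pass to the fibres of the fibration of points of a semi-abelian category (see \cite{Borceux-Bourn}). Finally, $\Pt_X(\C)$ has finite coproducts: the initial object is the zero object, and the binary coproduct of two points is given by the pushout $A +_X C$ of their sections, which exists because $\C$, being semi-abelian, is finitely cocomplete. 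Together these give semi-abelianness of $\Pt_X(\C)$.

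The hard part—if one prefers not to simply invoke the stability of semi-abelianness under passage to the fibres—will be checking Barr-exactness and the existence of binary coproducts directly, since the remaining ingredients (pointedness, finite completeness, protomodularity) are routine. Both of these are, however, classical facts about the fibration of points over a semi-abelian base, so I would cite them rather than reprove them, and treat the equivalence $\Pt_X(\C)\simeq\Act_X(\C)$ as the real content of the argument.
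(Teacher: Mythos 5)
Your proof is correct and follows exactly the paper's own route: the paper's proof simply invokes the equivalence $\Act_X(\C)\simeq\Pt_X(\C)$ from~\cite{Bourn-Janelidze:Semidirect,BJK} together with Corollary~\ref{Corollary Fibres}, leaving the (standard) semi-abelianness of the fibre $\Pt_X(\C)$ implicit. Your additional verification of pointedness, limits, protomodularity, exactness and coproducts in $\Pt_X(\C)$ is sound but is exactly the classical material the paper takes for granted.
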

\begin{proof}
This is an immediate consequence of Corollary~\ref{Corollary Fibres}, using the equivalence between actions and points from~\cite{Bourn-Janelidze:Semidirect,BJK}, see also Subsection~\ref{flat}.
\end{proof}

\begin{proposition}\label{Proposition XMod}
If $\C$ is algebraically coherent, then the category $\RG(\C)$ of reflexive graphs in $\C$ is algebraically coherent.

If, moreover, $\C$ is exact Mal'tsev, then also the category $\Cat(\C)$ of internal categories (=~internal groupoids) in $\C$ is algebraically coherent.
As a consequence, the category $\Eq(\C)$ of (effective) equivalence relations in $\C$ is algebraically coherent.

If, moreover, $\C$ is semi-abelian then, by equivalence, the categories $\PXMod(\C)$ and $\XMod(\C)$ of (pre)crossed modules in $\C$ are algebraically coherent.
\end{proposition}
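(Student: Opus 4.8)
The plan is to treat the four assertions in turn, reducing each to one of the two structural results already in hand: Proposition~\ref{Proposition Functor Categories} for diagram categories, and Proposition~\ref{Proposition Reflective Subcat} for full subcategories closed under finite products and subobjects.

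For reflexive graphs, I would first note that a reflexive graph in $\C$ is exactly a functor into $\C$ from the small category with two objects $0$, $1$, two arrows $d,c\colon 1\to 0$ and an arrow $e\colon 0\to 1$ subject to $de=ce=1_0$. Hence $\RG(\C)$ is a category of diagrams in $\C$, and Proposition~\ref{Proposition Functor Categories} applies verbatim: since limits, colimits, strong epimorphisms and jointly strongly epimorphic pairs are all computed pointwise, the fibration of points of $\RG(\C)$ and its change-of-base functors are pointwise those of $\C$, hence coherent. This settles the first claim.

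For internal categories I would use that, $\C$ being Mal'tsev, every internal category is an internal groupoid and the forgetful functor $\Gpd(\C)\to\RG(\C)$ is fully faithful, so $\Cat(\C)=\Gpd(\C)$ is a full subcategory of the already algebraically coherent category $\RG(\C)$. The strategy is then to verify the hypotheses of Proposition~\ref{Proposition Reflective Subcat}. Closure under finite products is immediate, the componentwise product of two groupoids being a groupoid. The hard part, and the main obstacle of the whole proof, is closure under subobjects: given a monomorphism $G\rightarrowtail H$ of reflexive graphs with $H$ a groupoid, one must equip $G$ with a (necessarily unique) groupoid structure. Here I would invoke the Mal'tsev characterisation of internal groupoids, by which a groupoid structure on a reflexive graph amounts to a connector between the kernel pairs $R[d]$ and $R[c]$ on the object of arrows; in a Mal'tsev category such a connector is unique and is cut out by finite-limit data stable under monomorphisms. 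Restricting the connector of $H$ along the induced monomorphism $R[c_G]\times_{G_1}R[d_G]\rightarrowtail R[c_H]\times_{H_1}R[d_H]$ and using the Mal'tsev property to factor it through $G_1$ produces the composition on $G$, and uniqueness makes the inclusion a groupoid morphism. With both closures established, Proposition~\ref{Proposition Reflective Subcat} yields algebraic coherence of $\Cat(\C)$. The case of equivalence relations follows as a mild variant: $\Eq(\C)$ is the full subcategory of $\Cat(\C)$ of those graphs for which $\langle d,c\rangle$ is monic, so a reflexive relation. It is closed under products, and here closure under subobjects becomes easy rather than hard, since a subobject $G\rightarrowtail R$ of an equivalence relation has $\langle d_G,c_G\rangle$ monic (it is the first factor of the monomorphism $G_1\rightarrowtail R_1\rightarrowtail R_0\times R_0$ through $G_0\times G_0\rightarrowtail R_0\times R_0$), whence $G$ is a reflexive relation, and in a Mal'tsev category every reflexive relation is automatically an equivalence relation.

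Finally, for (pre)crossed modules I would invoke the classical equivalences valid in the semi-abelian setting: internal precrossed modules correspond to internal reflexive graphs and internal crossed modules to internal groupoids~\cite{Janelidze}, that is, $\PXMod(\C)\simeq\RG(\C)$ and $\XMod(\C)\simeq\Gpd(\C)=\Cat(\C)$. A semi-abelian category is in particular exact Mal'tsev, so the previous two cases apply; and since algebraic coherence is defined purely through the fibration of points, it is invariant under equivalence of categories. Hence $\PXMod(\C)$ and $\XMod(\C)$ are algebraically coherent. I expect the only genuinely delicate step to be the subobject-closure of $\Gpd(\C)$ in $\RG(\C)$ in the Mal'tsev case, everything else being either formal (diagram categories, invariance under equivalence) or a routine limit computation.
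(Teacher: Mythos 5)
Your handling of $\RG(\C)$ and of $\PXMod(\C)$, $\XMod(\C)$ coincides with the paper's: Proposition~\ref{Proposition Functor Categories} for diagram categories, and Janelidze's equivalences~\cite{Janelidze} together with invariance of algebraic coherence under equivalence for the final claim. Your argument for $\Eq(\C)$ is also correct, and is in fact more self-contained than the paper's: a subobject of an equivalence relation (in $\Cat(\C)$, or even directly in $\RG(\C)$) has monic $\langle d,c\rangle$, hence is a reflexive relation, hence an equivalence relation by the Mal'tsev property, so the closure hypotheses of Proposition~\ref{Proposition Reflective Subcat} are genuinely easy to verify there; the paper instead quotes (regular epi)-reflectivity of $\Eq(\C)$ in $\Cat(\C)$, following~\cite{Gran:Central-Extensions,Bourn-comprehensive}.

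The gap is exactly at the step you yourself single out as the hard one: closure of $\Cat(\C)=\Gpd(\C)$ under subobjects in $\RG(\C)$. Given a monomorphism of reflexive graphs $G\rightarrowtail H$ with $H$ a groupoid, uniqueness of connectors and left-exactness give you nothing beyond the composite
\[
R[d_G]\times_{G_1}R[c_G]\longrightarrow R[d_H]\times_{H_1}R[c_H]\xrightarrow{\;p_H\;} H_1,
\]
and the entire content of the claim is that this composite factors through the monomorphism $G_1\rightarrowtail H_1$. That factorisation is not supplied by ``the Mal'tsev property'': it is not a formal consequence of uniqueness of connectors or of stability of finite limits under monomorphisms. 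To see what is really being used, note why it works in a Mal'tsev \emph{variety}: a connector $p\colon R\times_X S\to X$ is a homomorphism, so for $x\,R\,y\,S\,z$, applying $p$ to the identity $m\bigl((x,y,y),(y,y,y),(y,y,z)\bigr)=(x,y,z)$, where $m$ is a Mal'tsev term, yields $p(x,y,z)=m(x,y,z)$; thus $p$ is the restriction of a globally defined term operation and lands in every subalgebra. No such global operation exists in an abstract Mal'tsev category, and exactness---a hypothesis of the proposition which your argument never invokes---is precisely what replaces it: the statement that $\Gpd(\C)$ is closed under subobjects (and regular quotients) in $\RG(\C)$, i.e.\ that it is a Birkhoff subcategory, is a theorem of Gran~\cite{Gran:Central-Extensions} for exact Mal'tsev categories, proved via Smith--Pedicchio commutator theory rather than by a limit argument. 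So your route can be completed, but only by citing that theorem in its Birkhoff form---which is the same external input the paper uses in its reflectivity form, namely that $\Cat(\C)$ is (regular epi)-reflective in $\RG(\C)$, after which the ``in particular'' clause of Proposition~\ref{Proposition Reflective Subcat} applies. As written, the crucial step of your proof is asserted, not proved.
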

\begin{proof}
The first statement follows from Proposition~\ref{Proposition Functor Categories}. We now assume that $\C$ is exact Mal'tsev.
Since the category of internal categories of~$\C$ is (regular epi)-reflective in $\RG(\C)$, we have that $\Cat(\C)$ is algebraically coherent by Proposition~\ref{Proposition Reflective Subcat}.
In turn, following~\cite{Gran:Central-Extensions, Bourn-comprehensive}, we see that the category $\Eq(\C)$ is (regular epi)-reflective in $\Cat(\C)$.
The final claim in the semi-abelian context now follows from the results of~\cite{Janelidze}.
\end{proof}

\begin{examples}
Crossed modules (of groups, rings, Lie algebras, etc.); $n$-cat-groups, for all $n$~\cite{Loday}; groups in a coherent category.
\end{examples}

\begin{proposition}
If $\C$ is an algebraically coherent exact Mal'tsev category, then 
\begin{enumerate}
\item the category $\Arr(\C)$ of arrows in $\C$,
\item its full subcategory $\Ext(\C)$ determined by the extensions (=~regular epimorphisms), and 
\item the category $\CExt_{\B}(\C)$ of $\B$-central extensions~\cite{Janelidze-Kelly} in $\C$, for any Birkhoff subcategory $\B$ of $\C$,
\end{enumerate}
are all algebraically coherent.
\end{proposition}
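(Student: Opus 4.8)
The plan is to treat the three items separately, reducing each to a stability result already established in the paper, with Barr exactness and the Mal'tsev hypothesis entering only in~(2) and~(3). For~(1), I would simply note that $\Arr(\C)$ is the category of diagrams in $\C$ indexed by the arrow category $\{0\to 1\}$, so it is algebraically coherent by Proposition~\ref{Proposition Functor Categories}; neither exactness nor the Mal'tsev condition is needed here.

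For~(2), I would use exactness to identify $\Ext(\C)$ with $\Eq(\C)$. Sending a regular epimorphism to its kernel pair defines a functor $\Ext(\C)\to\Eq(\C)$. It is fully faithful, because a morphism of extensions corresponds bijectively to an internal functor between the associated kernel-pair relations: the domain component of such a functor coequalises the relation and hence factors uniquely through the coequalisers, recovering the codomain component. It is essentially surjective precisely because in an exact category every equivalence relation is effective, i.e.\ arises as a kernel pair. Thus $\Ext(\C)\simeq\Eq(\C)$. Since algebraic coherence is formulated purely through the fibration of points, it is invariant under equivalence of categories; as $\Eq(\C)$ is algebraically coherent by Proposition~\ref{Proposition XMod}, so is $\Ext(\C)$.

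For~(3), I would exhibit $\CExt_{\B}(\C)$ as a full subcategory of $\Ext(\C)$ closed under finite products and subobjects, and then invoke Proposition~\ref{Proposition Reflective Subcat} together with the algebraic coherence of $\Ext(\C)$ obtained in~(2). The cleanest route to the required closure is to observe that the centralisation functor realises $\CExt_{\B}(\C)$ as a (regular epi)-reflective subcategory of $\Ext(\C)$, which is exactly the situation covered by the ``in particular'' clause of Proposition~\ref{Proposition Reflective Subcat}. The point is that, in the exact Mal'tsev setting, the Birkhoff reflection $\C\to\B$ is admissible, so the centralisation exists and its units are regular epimorphisms (with identity codomain component), from which closure under subobjects and products follows; equivalently, a product of central extensions is central and a sub-extension of a central extension is central.

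The hard part is~(3): both~(1) and the equivalence underlying~(2) are essentially formal once exactness is invoked, whereas~(3) rests entirely on the theory of central extensions of~\cite{Janelidze-Kelly}. The real content to be supplied is the admissibility of the reflection $\C\to\B$ and the consequent existence of a centralisation reflection $\Ext(\C)\to\CExt_{\B}(\C)$ with regular epimorphic units, or equivalently the stability of central extensions under finite products and under passage to sub-extensions inside $\Ext(\C)$.
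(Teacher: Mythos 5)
Your proposal is correct and follows essentially the same route as the paper: (a) via the diagram-category result (Proposition~\ref{Proposition Functor Categories}), (b) via the exactness-induced equivalence $\Ext(\C)\simeq\Eq(\C)$ combined with Proposition~\ref{Proposition XMod}, and (c) by exhibiting $\CExt_{\B}(\C)$ inside $\Ext(\C)$ so that Proposition~\ref{Proposition Reflective Subcat} applies. The paper states these three steps without elaboration, so your filled-in details (the kernel-pair equivalence, and the reflectivity of central extensions coming from admissibility of the Birkhoff reflection in the exact Mal'tsev setting) are exactly the implicit content of its proof.
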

\begin{proof}
(a) follows from Proposition~\ref{Proposition Functor Categories} since $\Arr(\C)$ is a category of diagrams in~$\C$. (b) follows from Proposition~\ref{Proposition XMod}, because $\Ext(\C)$ and $\Eq(\C)$ are equivalent categories.
(c) now follows from~(b) by Proposition~\ref{Proposition Reflective Subcat}.
\end{proof}

\begin{examples}
Inclusions of normal subgroups (considered as a full subcategory of $\Arr(\Gp)$); central extensions of groups, Lie algebras, crossed modules, etc.; discrete fibrations of internal categories (considered as a full subcategory of $\Arr(\Cat(\C))$) in an algebraically coherent semi-abelian category $\C$~\cite[Theorem~3.2]{Gran:Central-Extensions}.
\end{examples}

\begin{proposition}
Any sub-quasivariety (in particular, any subvariety) of an algebraically coherent variety is algebraically coherent.
\end{proposition}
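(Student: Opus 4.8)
The plan is to reduce everything to Proposition~\ref{Proposition Reflective Subcat}, which already establishes that any full subcategory of an algebraically coherent category closed under finite products and subobjects is itself algebraically coherent. So the entire task is to verify that a sub-quasivariety satisfies these two closure hypotheses, after which the conclusion is immediate.

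First I would recall the standard description of a quasivariety: a sub-quasivariety $\B$ of a variety $\C$ is the full subcategory of algebras satisfying a prescribed set of quasi-identities (universally quantified implications between equations). By Mal'cev's characterisation, such a class is, up to isomorphism, precisely one that is closed under subalgebras, arbitrary products, and ultraproducts. For the present purpose only the first two closure properties are relevant: $\B$ is closed under subalgebras and under products, hence in particular under finite products. (The subvariety case is subsumed, since identities are special quasi-identities.)

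Next I would make the routine translation between this model-theoretic description and the categorical hypotheses of Proposition~\ref{Proposition Reflective Subcat}. In a variety the monomorphisms are exactly the injective homomorphisms, so subobjects coincide with subalgebras; likewise the categorical finite products coincide with the underlying finitary products of algebras. Therefore $\B$, viewed as a full subcategory of $\C$, is closed under finite products and subobjects. Applying Proposition~\ref{Proposition Reflective Subcat} with the algebraically coherent category $\C$ and its full subcategory $\B$ then yields that $\B$ is algebraically coherent.

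I do not expect any genuine obstacle here; the proof is essentially a one-line appeal to the earlier proposition. The only point meriting a word of care is the passage between the closure conditions defining a quasivariety (closure under $\mathrm{S}$ and $\mathrm{P}$) and the categorical hypotheses of Proposition~\ref{Proposition Reflective Subcat} (closure under subobjects and finite products), but both identifications are entirely standard for varieties of algebras.
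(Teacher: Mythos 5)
Your proof is correct and takes essentially the same route as the paper: both arguments reduce the statement to Proposition~\ref{Proposition Reflective Subcat}. The only difference is which clause of that proposition is invoked --- the paper cites Mal'cev's result that any sub-quasivariety is a (regular epi)-reflective subcategory and uses the ``in particular'' clause, whereas you verify the first clause directly from the closure of quasivarieties under subalgebras and products; since the reflective clause is itself deduced from the first, the two arguments coincide in substance (yours even avoids needing the existence of the reflector).
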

\begin{proof}
Since any sub-quasivariety is a (regular epi)-reflective subcategory~\cite{Maltsev}, this follows from Proposition~\ref{Proposition Reflective Subcat}.
\end{proof}

\begin{examples}
$n$-nilpotent or $n$-solvable groups, rings, Lie algebras etc.; torsion-free (abelian) groups, reduced rings.
\end{examples}

\subsection{Monoids.}
We end this section with some partial algebraic coherence properties for monoids.

\begin{proposition}\label{Prop Monoids}
If $X$ is a monoid satisfying the quasi-identity $xy=1 \Rightarrow yx=1$, then the kernel functor $\Ker\colon{\Pt_{X}(\Mon)\to \Mon}$ is coherent.
\end{proposition}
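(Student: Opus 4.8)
The plan is to verify coherence of $\Ker\colon\Pt_X(\Mon)\to\Mon$ through Proposition~\ref{Proposition binary sums}, applied to the left exact functor $F=\Ker$. Finite-limit preservation comes for free: as recalled in Subsection~\ref{flat}, the kernel functor is the right adjoint in the adjunction $(X+(-))\dashv\Ker$, so it preserves all limits. It therefore remains to check condition~(ii) of Proposition~\ref{Proposition binary sums}, namely that $\Ker$ preserves strong epimorphisms and that, for any two points $(A,p',s')$ and $(C,p'',s'')$ over $X$, the comparison morphism $H+L\to K$ is a strong epimorphism, where $H=\Ker(p')$, $L=\Ker(p'')$, and $K=\Ker(p)$ is the kernel of the induced split epimorphism $p\colon A+_XC\to X$ on the coproduct in $\Pt_X(\Mon)$ --- that is, on the amalgamated coproduct of monoids.

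Preservation of strong epimorphisms is routine. Since $\Pt_X(\Mon)$ is (equivalent to) a variety, its strong epimorphisms are exactly the surjective morphisms; given such a surjection $\phi\colon(A,p',s')\to(D,q,t)$ and an element $d\in\Ker(q)$, any preimage $a\in A$ satisfies $p'(a)=q\phi(a)=q(d)=1$, so $a\in H$ maps onto $d$ and $\Ker(\phi)$ is surjective.

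The heart of the argument --- and the step where the quasi-identity is used --- is surjectivity of $H+L\to K$. I would represent an element of $K$ by a word $u_1\cdots u_n$ with each $u_i\in A$ or $u_i\in C$ and $p(u_1\cdots u_n)=1$, and argue by induction on $n$. If $n\leq 1$ the element already lies in $H$ or $L$, and if $u_1$ and $u_2$ belong to the same factor one shortens the word by multiplying them there. Otherwise, say $u_1\in A$ and $u_2\in C$; setting $x_1=p'(u_1)$, the relation $x_1\cdot p(u_2\cdots u_n)=1$ exhibits $x_1$ as right-invertible, so by the quasi-identity $x_1$ is a \emph{unit} of $X$, whence $s'(x_1)$ is invertible in $A$. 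Writing $h_1=u_1\,s'(x_1^{-1})$ gives $h_1\in H$ and $u_1=h_1\,s'(x_1)$; since $s'(x_1)=s''(x_1)$ in $A+_XC$, one absorbs it into $u_2$ to obtain $u_1\cdots u_n=h_1\cdot\bigl(s''(x_1)u_2\bigr)u_3\cdots u_n$, where the trailing word has length $n-1$ and still has $p$-value $1$. The induction hypothesis then completes the reduction.

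The main obstacle is precisely this combinatorial reduction inside the amalgamated coproduct $A+_XC$: without inverses in $X$ the usual group-theoretic splitting $a=\bigl(a\,s'(p'(a))^{-1}\bigr)\,s'(p'(a))$ is unavailable, and it is the quasi-identity $xy=1\Rightarrow yx=1$ that rescues it, by forcing every prefix value $p(u_1\cdots u_i)$ to be a unit of $X$ and hence to lift to an invertible element of $A$. Once this is in place, Proposition~\ref{Proposition binary sums} yields coherence of $\Ker$.
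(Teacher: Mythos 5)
Your proof is correct and takes essentially the same route as the paper's: both reduce coherence of $\Ker$, via Proposition~\ref{Proposition binary sums}, to the fact that every element of the kernel $K$ over the coproduct $A+_{X}C$ (in the paper, over a join $B=A\join C$, which amounts to the same computation) is a product of elements of $H$ and $L$, proved by induction on word length, using the quasi-identity to invert the relevant values in $X$ and the identification $s'=s''$ of the sections to pass the correcting factor to the neighbouring letter---you peel letters off from the left where the paper peels from the right. The one point to tidy up is your justification that strong epimorphisms in $\Pt_{X}(\Mon)$ are exactly the morphisms carried by surjective homomorphisms: rather than asserting that $\Pt_{X}(\Mon)$ is equivalent to a variety (which is not obvious---in particular $\Ker$ is not monadic for monoids, so the usual semidirect-product description is unavailable), observe that $\Pt_{X}(\Mon)$ is a coslice of a slice of the regular category $\Mon$, hence regular with regular ($=$ strong) epimorphisms computed as surjections in $\Mon$.
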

\begin{proof}
Given a diagram
\[
\vcenter{\xymatrix{
	H \ar@{{ |>}->}[d]_-{h} \ar@{{ >}->}[r] & K \ar@{{ |>}->}[d] & L \ar@{{ |>}->}[d]^-{l} \ar@{{ >}->}[l] \\
	A \ar@{{ >}->}[r]^-{i_{A}} \ar@<.5ex>[d]^{p'} & B \ar@<.5ex>[d]^{p} & C \ar@{{ >}->}[l]_-{i_{C}} \ar@<.5ex>[d]^{p''} \\
	X \ar@{=}[r] \ar@<.5ex>[u]^-{s'} & X \ar@<.5ex>[u]^-{s} & X \ar@{=}[l] \ar@<.5ex>[u]^-{s''}
}}
\]
where $B=A\join C$, we consider $A$ and $C$ as subsets of $B$ via the monomorphisms $i_{A}$ and $i_{C}$.
We need to show that any element $k$ of~$K$ written as a product $k=a_{1}c_{1}\cdots a_{n}c_{n}$ of elements of $A$ and $C$ in $B$ may be written as a product of elements of $H$ and $L$ in $K$.
We prove this by induction on the length of the product $a_{1}c_{1}\cdots a_{n}c_{n}$. 

When $k=ac$, first note that since $p(ac)=1$ it follows that $p''(c)p'(a)=1$ and so $s'(p''(c))s''(p'(a))=1$.
Hence
\[
k=ac=a s'(p''(c))\cdot s''(p'(a))c,
\]
where $a s'(p''(c))\in H$ and $s''(p'(a))c\in L$.

If $k=a_{1}c_{1}a_{2}\cdots a_{n}c_{n}$, then $p(a_{1}c_{1}a_{2}\cdots a_{n})=p''(c_{n})^{-1}$.
Hence 
\begin{align*}
k&=a_{1}c_{1}a_{2}\cdots a_{n}c_{n}\\
&=a_{1}c_{1}a_{2}\cdots a_{n}s'(p''(c_{n}))\cdot s''(p(a_{1}c_{1}a_{2}\cdots a_{n}))c_{n}
\end{align*}
is a product of two elements of $K$, where the first has length $n-1$ and the second is in $L$.
\end{proof}

As a consequence, both the category $\MonC$ of monoids with cancellation and the category $\CMon$ of commutative monoids have coherent kernel functors.

\begin{remark}
Although we shall not explore this further here, it is worth noting that the category of \emph{all} monoids is \defn{relatively} algebraically coherent: if we replace the fibration of points in Definition~\ref{Definition Algebraically Coherent Category} by the \emph{fibration of Schreier points} considered in~\cite{SchreierBook}, all kernel functors $\Ker\colon{\SPt_{X}(\Mon)\to \Mon}$ will be coherent.
To see this, it suffices to modify the proof of Proposition~\ref{Prop Monoids} as follows.

If $k=ac$, use~\cite[Lemma~2.1.6]{SchreierBook} to write $a$ as $hx$ with $h\in H$ and $x\in X$.
Then $k=h\cdot xc$ where $1=p(k)=p(h)\cdot p(xc)=p(xc)$, so that $xc\in L$. 

If $k=a_{1}c_{1}a_{2}\cdots a_{n}c_{n}$, write $a_{1}$ as $hx$ with $h\in H$ and $x\in X$.
Then $k=h\cdot (xc_{1})a_{2}\cdots a_{n}c_{n}$ where $1=p(k)=p(h)\cdot p((xc_{1})a_{2}\cdots a_{n}c_{n})=p((xc_{1})a_{2}\cdots a_{n}c_{n})$.
Hence the induction hypothesis may be used on the product $(xc_{1})a_{2}\cdots a_{n}c_{n}$. 
\end{remark}

\section{The functor $X\cosmash(-)$ and two-nilpotency}\label{Subsection Diamond}
Consider a cospan $(k\colon K\to X, l\colon L\to X)$ in $\C$.
Following~\cite{MM-NC}, we compute the \defn{Higgins commutator} $[K,L]\leq X$ as in the commutative diagram
\begin{equation}\label{diag:higgins}
\begin{aligned}
\xymatrix{
 0 \ar[r] & K \cosmash L \ar@{{ |>}->}[r]^{\iota_{K,L}} \ar@{-{ >>}}[d] & K + L \ar@{-{ >>}}[r]^{\sigma_{K,L}} \ar[d]^-{\mus{k}{l}}
 & K \times L \ar[r] & 0\\
 & [K,L] \ar@{ >->}[r] & X
}
\end{aligned}
\end{equation}
where $\sigma_{K,L}$ is the canonical morphism from the coproduct to the product, $\iota_{K,L}$ is its kernel and $[K,L]$ is the regular image of $\iota_{K,L}$ through $\mus{k}{l}$.

In contrast with Lemma~\ref{Lemma Bemol}, even in a semi-abelian algebraically coherent category $\C$, the \defn{co-smash product} functors $X\cosmash(-)\colon{\C\to \C}$ for $X\in\C$ introduced in~\cite{Smash,MM-NC} need not preserve jointly strongly epimorphic pairs in general.
Indeed, this would imply that Higgins commutators in $\C$ distribute over joins. To see this, observe the following commutative diagram
\[
\xymatrix{
	K \cosmash L \ar@{-{ >>}}[d] \ar[r] & K \cosmash (L \join M) \ar@{-{ >>}}[d] & K \cosmash M \ar@{-{ >>}}[d] \ar[l] \\
	[K,L] \ar@{ >->}[r] & [K,L\join M] & [K,M] \ar@{ >->}[l]}
\]
in which $K$, $L$ and $M$ are all subobjects of a given object $X$. Since the middle vertical arrow in it is a regular epimorphism, saying that the upper cospan is jointly strongly epimorphic would imply that also the bottom cospan is jointly strongly epimorphic, so that $[K, L \join M] = [K,L] \join [K,M]$.
But this property fails in $\Gp$, as the following example shows.

\begin{example}
Let us consider the symmetric group $X=S_4$ and its subgroups
\[
K=\langle(12)\rangle\,, \qquad L=\langle(23)\rangle\,\qquad\text{and} \qquad M=\langle(34)\rangle\,.
\]
Then $L \vee M =\langle(23),(34)\rangle$, $[K,L]=\langle(123)\rangle$, and $[K,M]=0$, while $[K, L \vee M]$ is the alternating group $A_4$.
That is:
\[
[K, L \vee M] \neq [K,L] \vee [K,M]\,.
\]
\end{example}

On the other hand, for a semi-abelian category, this condition on the functors $X\cosmash(-)$ \emph{does} imply algebraic coherence.
Proposition~2.7 in~\cite{Actions} gives us a split short exact sequence
\[
\xymatrix@C=5em{
0 \ar[r] & X\cosmash Y \ar@{{ |>}->}[r]^-{j_{X,Y}} & X\flat Y \ar@{-{ >>}}@<0.5ex>[r]^-{\mus{0}{1_{Y}}\kappa_{X,Y}} & Y \ar@{{ >}->}@<0.5ex>[l]^-{\eta_{Y}} \ar[r] & 0}
\]
so that, for any $X$, $Y\in \C$, the object $X\flat Y$ decomposes as $(X\cosmash Y)\join Y$.
As a consequence, if $X\cosmash(-)$ preserves jointly strongly epimorphic pairs, then so does the functor $X\flat (-)$; algebraic coherence of~$\C$ now follows from Theorem~\ref{Theorem Bemol}.

If a semi-abelian category $\C$ is \defn{two-nilpotent}---which means~\cite{HVdL} that every \emph{ternary} co-smash product $X\cosmash Y\cosmash Z$, which may be obtained as the kernel in the short exact sequence
\[
\xymatrix@=3em{ 0 \ar[r] & X\cosmash Y\cosmash Z \ar@{{ |>}->}[r] &
 (X+Y)\cosmash Z \ar@{-{ >>}}[r] & (X\cosmash Z)\times(Y\cosmash Z) \ar[r] & 0,}
 \]
 is trivial---then Higgins commutators in $\C$ do also distribute over finite joins by Proposition~2.22 of~\cite{HVdL}, see also~\cite{Actions}.
Hence it follows that all functors of the form ${X\cosmash(-)}\colon{\C\to \C}$ preserve jointly strongly epimorphic pairs. Indeed, if we have a jointly strongly epimorphic cospan
$$ \xymatrix{L \ar[r]^f & K & M \ar[l]_g} $$
and denote by $f(L)$ and $g(M)$ the regular images in $K$ of $L$ and $M$, respectively, then $K=f(L)\join g(M)$. Now since $X\cosmash(-)$ preserves regular epimorphisms~\cite[Lemma~5.11]{MM-NC}, if (being a Higgins commutator) it distributes over binary joins, then also the pair $(X\cosmash f,X\cosmash g)$ is jointly strongly epimorphic.

One example of this situation is the category $\Nil_{2}(\Gp)$ of groups of nilpotency class at most $2$.
More generally, this happens in the \defn{two-nilpotent core} $\Nil_{2}(\C)$ of any semi-abelian category $\C$, which is the Birkhoff subcategory of~$\C$ determined by the \defn{two-nilpotent objects}: those $X$ for which $[X,X,X]=0$ where, given three subobjects $K$, $L$, $M\leq X$ represented by monomorphisms $k$, $l$ and~$m$, the \defn{ternary commutator} $[K,L,M]\leq X$ is the image of the composite
 \[
 \xymatrix@=3em{K\cosmash L\cosmash M \ar@{{ |>}->}[r]^-{\iota_{K,L,M}} & K+L+M \ar[r]^-{\muss{k}{l}{m}} & X.}
 \]
 Thus we proved:

 \begin{theorem}\label{Theorem Two-Nilpotent}
 Any two-nilpotent semi-abelian category is algebraically coherent.\noproof
 \end{theorem}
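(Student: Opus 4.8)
The plan is to reduce the statement, via Theorem~\ref{Theorem Bemol}, to a preservation property of the co-smash product functors. A two-nilpotent semi-abelian category is in particular homological with binary coproducts, so Theorem~\ref{Theorem Bemol} tells us that algebraic coherence is equivalent to asking that each functor $X\flat(-)\colon\C\to\C$ preserve jointly strongly epimorphic pairs. My first step would be to transfer this property from $X\flat(-)$ to $X\cosmash(-)$, using the split short exact sequence $0\to X\cosmash Y\to X\flat Y\to Y\to 0$ recalled above, which exhibits $X\flat Y$ as the join $(X\cosmash Y)\join Y$. Because the summand $Y$ is split off by the unit of the action monad, preservation of jointly strongly epimorphic pairs by $X\cosmash(-)$ propagates to $X\flat(-)$; so it suffices to prove that each $X\cosmash(-)$ preserves such pairs.

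The heart of the argument is where two-nilpotency enters, and I would first rephrase the desired preservation as a statement about Higgins commutators. Given a jointly strongly epimorphic cospan $L\xrightarrow{f}K\xleftarrow{g}M$, passing to regular images gives $K=f(L)\join g(M)$; and since $X\cosmash(-)$ preserves regular epimorphisms by~\cite[Lemma~5.11]{MM-NC}, the pair $(X\cosmash f,X\cosmash g)$ will be jointly strongly epimorphic precisely when $X\cosmash(-)$ sends this join to the join of the images of its two legs---equivalently, when the relevant Higgins commutators distribute over the binary join. Thus the whole question reduces to establishing that, in a two-nilpotent semi-abelian category, Higgins commutators distribute over finite joins, i.e.\ $[K,L\join M]=[K,L]\join[K,M]$.

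This distributivity is the step I expect to be the main obstacle, and it is exactly where the vanishing of every ternary co-smash product $X\cosmash Y\cosmash Z$ is indispensable. I would establish it by exploiting the comparison between iterated co-smash products and higher Higgins commutators: the obstruction to distributivity of $[K,L\join M]$ over $[K,L]\join[K,M]$ is measured by ternary commutator terms, which two-nilpotency forces to be trivial---this is the content of Proposition~2.22 of~\cite{HVdL}. Once distributivity over finite joins is secured, the chain closes: each $X\cosmash(-)$ preserves jointly strongly epimorphic pairs, hence so does each $X\flat(-)$, and Theorem~\ref{Theorem Bemol} delivers algebraic coherence.
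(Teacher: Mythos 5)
Your proposal is correct and follows essentially the same route as the paper: reduction via Theorem~\ref{Theorem Bemol} to preservation of jointly strongly epimorphic pairs by $X\flat(-)$, transfer to $X\cosmash(-)$ through the split short exact sequence $0\to X\cosmash Y\to X\flat Y\to Y\to 0$, reformulation as distributivity of Higgins commutators over binary joins using preservation of regular epimorphisms by $X\cosmash(-)$, and finally the appeal to Proposition~2.22 of~\cite{HVdL} where two-nilpotency kills the ternary terms. The paper's own argument uses exactly these steps, with the same supporting references.
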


 In any semi-abelian category $\C$, the Huq commutator $[K,L]_{X}$ of two subobjects $K$, $L\leq X$ is the normal closure of the Higgins commutator $[K,L]$ (see Proposition~5.7 in~\cite{MM-NC}), so by by Proposition~4.14 of~\cite{Actions} it may be obtained as the join $[K,L]\join [[K,L],X]\normal X$.
We see that if $\C$ is two-nilpotent, then Huq commutators distribute over joins of subobjects.
Hence if it is, moreover, a variety, it is \emph{algebraically cartesian closed} \ACC\ by~\cite{Gray2014}.
We will, however, prove a stronger result for categories which are merely algebraically coherent: see Theorem~\ref{(FWACC)} below.

 \begin{examples}
 $\Nil_{2}(\C)$ for any semi-abelian category $\C$; modules over a square ring~\cite{BHP}.
 \end{examples}


\section{Categorical-algebraic consequences}\label{Section Consequences}

\subsection{Protomodularity.}
We begin this section by showing that a pointed Mal'tsev category which is algebraically coherent is necessarily protomodular---a straightforward generalisation of Theorem 3.10 in~\cite{B9}.

\begin{theorem}\label{theorem: Mal'tsev implies protomodular}
Let $\C$ be a pointed algebraically coherent category.
If $\C$ is a Mal'tsev category, then it is protomodular.
\end{theorem}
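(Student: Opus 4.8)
The plan is to reduce the statement to the classical criterion for protomodularity in the pointed case: a pointed finitely complete category is protomodular if and only if every kernel functor $\Ker\colon \Pt_X(\C)\to\C$ is conservative; equivalently (see~\cite{Borceux-Bourn}), for every split epimorphism $(p\colon A\to B,s)$ with kernel $k\colon K\to A$, the cospan $(k,s)$ is jointly strongly epimorphic. That conservativity of all kernel functors is already enough follows formally: writing $!_X\colon 0\to X$ for the map from the (initial) zero object, any $f\colon X\to Y$ satisfies $!_Y^{*}=!_X^{*}\circ f^{*}$, so if every $!_X^{*}=\Ker$ reflects isomorphisms then so does every change-of-base functor $f^{*}$, which is exactly protomodularity in the sense of~\cite{Bourn1991}. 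Hence it suffices to show that each kernel functor $\Ker\colon \Pt_X(\C)\to\C$ is conservative.

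There are two inputs. The first is that algebraic coherence makes each kernel functor coherent, and this is available with no circularity: $\Ker\colon\Pt_X(\C)\to\C$ is precisely the change-of-base functor $!_X^{*}$ along $!_X\colon 0\to X$ (under the identification $\Pt_0(\C)\cong\C$), so by hypothesis it preserves finite limits and jointly strongly epimorphic pairs; in particular it preserves monomorphisms and kernel pairs. The second input is the Mal'tsev hypothesis, which I would exploit through Bourn's fibrational picture: since $\C$ is a pointed Mal'tsev category, each fibre $\Pt_X(\C)$ is again a pointed Mal'tsev, hence \emph{unital}, category.

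Now take $\phi\colon A'\to A$ in $\Pt_X(\C)$ whose image $\bar\phi=\Ker(\phi)$ is an isomorphism, and aim to prove $\phi$ invertible. As $\Ker$ preserves kernel pairs, the kernel pair $R[\phi]$ of $\phi$ is sent to the discrete relation $R[\bar\phi]$, so the diagonal $\delta\colon A'\to R[\phi]$ is carried by $\Ker$ to an isomorphism. The heart of the matter, and the step I expect to be the main obstacle, is to turn this information into $R[\phi]$ being discrete (so that $\phi$ is a monomorphism) and then into $\phi$ being an isomorphism: this is exactly where the Mal'tsev condition must enter essentially, since in a general pointed category the implication ``zero kernel $\Rightarrow$ isomorphism'' for split epimorphisms is equivalent to protomodularity itself, so coherence of $\Ker$ alone cannot suffice. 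Concretely, $R[\phi]$ is a reflexive, hence (by Mal'tsev) effective equivalence, relation, and the unital structure of the fibre $\Pt_X(\C)$ together with the preservation of jointly strongly epimorphic pairs by $\Ker$ should force $\delta$ to be invertible, yielding that $\phi$ is a monomorphism; the symmetric analysis then gives that $\phi$ is a strong epimorphism, whence an isomorphism. Equivalently, one may run the argument in the jointly-strongly-epimorphic form of the criterion: a monomorphism of points over $B$ inducing an isomorphism on kernels is, by Lemma~\ref{Lemma Subobject} and Lemma~\ref{lemma: composite of strong/extremal with jointly strong/extremal}, controlled by the Mal'tsev structure on the kernel pair $A\times_B A$ transported through the coherent functor $\Ker$, and this identifies it as an isomorphism.
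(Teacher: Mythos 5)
Your reduction and your inventory of ingredients are both correct: conservativity of all kernel functors implies protomodularity via $!_Y^{*}=!_X^{*}\circ f^{*}$; Bourn's criterion (for every split extension the cospan of kernel and splitting is jointly strongly epimorphic, Proposition~11 of~\cite{Bourn1991}) is the right target; the kernel functors are coherent by hypothesis; and the fibres $\Pt_B(\C)$ are unital because $\C$ is Mal'tsev~\cite{B0}. But the proof stops exactly where it should begin. The step you yourself flag as ``the main obstacle'' is never carried out, and the route you sketch for it cannot close: deducing that the diagonal $\delta\colon A'\to R[\phi]$ is invertible from the fact that $\Ker(\delta)$ is invertible is just another instance of the conservativity of $\Ker$ that you are trying to prove, so passing to kernel pairs gains nothing; likewise the ``symmetric analysis'' claiming that a monomorphism of points inducing an isomorphism on kernels must be an isomorphism is precisely Bourn's criterion again, i.e.\ the thing to be proved. (Also, invoking the Mal'tsev axiom to make $R[\phi]$ an equivalence relation is empty: $R[\phi]$ is a kernel pair, hence an equivalence relation in any category.)

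What is missing is the one concrete construction that makes your two ingredients interact; it is the entire content of the paper's proof, which never establishes conservativity at all but verifies Bourn's criterion directly. Given a split extension $X\xrightarrow{\kappa}A$ with $\alpha\colon A\to B$ split by $\beta$, consider in $\Pt_B(\C)$ the split extension
\[
(A,\alpha,\beta)\xrightarrow{\;\langle\alpha,1_A\rangle\;}(B\times A,\pi_1,\langle 1_B,\beta\rangle)\xrightarrow{\;1_B\times\alpha\;}(B\times B,\pi_1,\langle 1_B,1_B\rangle),
\]
where $1_B\times\alpha$ is split by $1_B\times\beta$ and $\langle\alpha,1_A\rangle$ is its kernel in $\Pt_B(\C)$. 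The cospan $(\langle\alpha,1_A\rangle,\,1_B\times\beta)$ is, as a cospan of arrows of $\C$, exactly the pair of product injections of the product of $(A,\alpha,\beta)$ and $(B\times B,\pi_2,\langle 1_B,1_B\rangle)$ in $\Pt_B(\C)$ (there $B\times A$ carries the \emph{other} point structure $\alpha\pi_2$); unitality of $\Pt_B(\C)$ makes this pair jointly strongly epimorphic there, Lemma~\ref{Lemma Subobject} transfers this to $\C$, and hence it holds as well in $\Pt_B(\C)$ with the $\pi_1$-structure displayed above. Now apply the coherent kernel functor $\Ker\colon\Pt_B(\C)\to\C$ to this split extension: it sends $\langle\alpha,1_A\rangle$ to $\kappa$ and $1_B\times\beta$ to $\beta$, so $(\kappa,\beta)$ is jointly strongly epimorphic and Proposition~11 of~\cite{Bourn1991} concludes. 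Without this identification of $(\kappa,\beta)$ as the $\Ker$-image of a pair of product injections in the unital fibre (or some equivalent device), your outline remains a statement of intent rather than a proof.
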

\begin{proof}
Let
\[
\xymatrix{
X \ar[r]^{\kappa} & A \ar@<0.5ex>[r]^{\alpha} & B \ar@<0.5ex>[l]^{\beta}
}
\]
be an arbitrary split extension.
Since the diagram
\[
\xymatrix{
A \ar@<0.5ex>[d]^{\alpha} & B\times A \ar[r]^-{1_{B}\times \alpha} \ar[l]_-{\pi_2} \ar@<0.5ex>[d]^{\alpha\pi_2} & B\times B \ar@<0.5ex>[d]^{\pi_2}\\
B\ar@{=}[r]\ar@<0.5ex>[u]^{\beta} & B \ar@{=}[r]\ar@<0.5ex>[u]^{\gauche 1_{B},\beta\droite} & B \ar@<0.5ex>[u]^{\gauche 1_{B},1_{B}\droite}
}
\]
is a product and $\Pt_{B}(\C)$ is a unital category (since $\C$ is Mal'tsev, see~\cite{B0}), it follows that the morphisms 
\[
\xymatrix{
A \ar[r]^-{\gauche \alpha,1_{A}\droite}\ar@<0.5ex>[d]^{\alpha} & B\times A\ar@<0.5ex>[d]^{\alpha\pi_2} & B\times B \ar[l]_-{1_{B}\times \beta}\ar@<0.5ex>[d]^{\pi_2}\\
B\ar@{=}[r]\ar@<0.5ex>[u]^{\beta} & B \ar@{=}[r]\ar@<0.5ex>[u]^{\gauche 1_{B},\beta\droite} & B \ar@<0.5ex>[u]^{\gauche 1_{B},1_{B}\droite}
}
\]
are jointly strongly epimorphic in $\Pt_B(\C)$.
Hence Lemma~\ref{Lemma Subobject} implies that they are jointly strongly epimorphic in $\C$.
Therefore, since in the diagram
\[
\xymatrix{
X \ar[r]^{\kappa}\ar[d]_{\kappa} & A \ar@<0.5ex>[r]^{\alpha}\ar[d]^{\gauche 0,1_{A}\droite} & B \ar@<0.5ex>[l]^{\beta}\ar[d]^{\gauche 0,1_{B}\droite}\\
A \ar[r]^-{\gauche \alpha,1_{A}\droite} \ar@<0.5ex>[d]^{\alpha} & B\times A \ar@<0.5ex>[r]^{1_{B}\times \alpha}\ar@<0.5ex>[d]^{\pi_1} & B\times B \ar@<0.5ex>[l]^{1_{B}\times \beta}\ar@<0.5ex>[d]^{\pi_1}\\
B\ar@{=}[r]\ar@<0.5ex>[u]^{\beta} & B \ar@{=}[r]\ar@<0.5ex>[u]^{\gauche 1_{B},\beta\droite} & B \ar@<0.5ex>[u]^{\gauche 1_{B},1_{B}\droite}
}
\]
the top split extension is obtained by applying the kernel functor to the bottom split extension in $\Pt_B(\C)$, it follows that $\kappa$ and $\beta$ are jointly strongly epimorphic.
Hence $\C$ is protomodular by Proposition~11 in~\cite{Bourn1991}.
\end{proof}

\begin{lemma}\label{lemma: split monos}
Let $\C$ be an arbitrary category with pullbacks.
If $s\colon D \to B$ is a split monomorphism and $\Pt_D(\C)$ is protomodular, then $s^{*} \colon \Pt_{B}(\C) \to \Pt_{D}(\C)$ reflects isomorphisms.
\end{lemma}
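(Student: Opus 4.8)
The plan is to reduce the statement to the conservativity of a kernel functor inside the \emph{pointed} protomodular category $\Pt_D(\C)$. First I would use that $s$, being a split monomorphism, admits a retraction $r\colon B\to D$ with $rs=1_D$; then the pair $Z=(r\colon B\to D,\, s\colon D\to B)$ is a split epimorphism equipped with a chosen splitting, that is, an object of $\Pt_D(\C)$. Recall that every fibre $\Pt_D(\C)$ is pointed, its zero object being $\mathbf 0=(1_D\colon D\to D,\, 1_D\colon D\to D)$, and note that the unique morphism $\mathbf 0\to Z$ in $\Pt_D(\C)$ is precisely the point-morphism represented by $s$.

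The key step is to exhibit an isomorphism of categories $\Pt_B(\C)\cong\Pt_Z(\Pt_D(\C))$ under which $s^*$ becomes the kernel functor. Explicitly, a point $(p\colon A\to B,\, \sigma\colon B\to A)$ over $B$ is sent to the split epimorphism $p\colon(A,rp,\sigma s)\to(B,r,s)$ of $\Pt_D(\C)$, with splitting $\sigma$. Conversely, an object of $\Pt_Z(\Pt_D(\C))$---a split epimorphism $g\colon W\to Z$ in $\Pt_D(\C)$ with splitting $h$---is completely determined by its underlying split epimorphism $(g\colon P\to B,\, h\colon B\to P)$ over $B$, because the two remaining structural maps $P\to D$ and $D\to P$ are forced to equal $rg$ and $hs$. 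A direct check shows these assignments are mutually inverse and natural, so the two categories coincide. Under this identification, pulling a point over $B$ back along $s\colon D\to B$ in $\C$ is exactly pulling the corresponding point of $\Pt_D(\C)$ back along $\mathbf 0\to Z$; thus, modulo the canonical identification $\Pt_{\mathbf 0}(\Pt_D(\C))\cong\Pt_D(\C)$, the functor $s^*$ is the change-of-base functor of the fibration of points of $\Pt_D(\C)$ along $\mathbf 0\to Z$, that is, the kernel functor $\Ker\colon\Pt_Z(\Pt_D(\C))\to\Pt_D(\C)$.

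Once this is in place the conclusion is immediate: since $\Pt_D(\C)$ is protomodular, by definition every change-of-base functor of its fibration of points is conservative, so in particular the kernel functor over $Z$ reflects isomorphisms; transporting this back along the isomorphism of categories shows that $s^*$ reflects isomorphisms. I expect the main obstacle to be the bookkeeping in the key step---verifying carefully that $\Pt_B(\C)\cong\Pt_Z(\Pt_D(\C))$ as categories and that $s^*$ matches the change-of-base functor along $\mathbf 0\to Z$ on the nose (up to the canonical comparison of pullbacks); everything else is formal. Note that the argument never uses anything about $s$ beyond the existence of the retraction $r$, which is exactly the split-monomorphism hypothesis.
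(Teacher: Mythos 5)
Your proof is correct, but it takes a genuinely different route from the paper's. The paper first invokes Proposition~11 of~\cite{Bourn1991} to reduce conservativity of $s^{*}$ to a concrete condition: for every split pullback along $s$, with projection $r$ and section $\beta$ of the pulled-back split epimorphism $\alpha$, the pair $(r,\beta)$ must be jointly strongly epimorphic in $\C$. It then uses the same key insight as you---transporting the situation into $\Pt_D(\C)$ via the retraction---but only objectwise: the split pullback exhibits $r$ as the kernel of $\alpha$ in the pointed protomodular category $\Pt_D(\C)$, so $(r,\beta)$ is jointly strongly epimorphic there, and Lemma~\ref{Lemma Subobject} transports this joint strong epimorphy down to $\C$. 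You instead globalise the insight into an isomorphism of categories $\Pt_B(\C)\cong\Pt_Z(\Pt_D(\C))$ with $Z=(B,r,s)$, under which $s^{*}$ becomes the change-of-base functor of $\Pt_D(\C)$ along $\mathbf{0}\to Z$, i.e.\ a kernel functor, so that the definition of protomodularity of $\Pt_D(\C)$ applies immediately. Your verifications of this identification (the forced structure maps $rg$ and $hs$, the correspondence of morphisms, and the agreement of the two pullbacks up to canonical isomorphism) are all sound. What each approach buys: yours avoids both Bourn's Proposition~11 and Lemma~\ref{Lemma Subobject}, working purely from the definition of protomodularity, which makes it more self-contained and conceptually transparent; the paper's is shorter given the infrastructure it already has in place, since the citation and Lemma~\ref{Lemma Subobject} do the bookkeeping that your fibre isomorphism does by hand. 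Both arguments use exactly the same hypotheses: a chosen retraction of $s$, pullbacks in $\C$, and protomodularity of $\Pt_D(\C)$.
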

\begin{proof}
Again by Proposition~11 in~\cite{Bourn1991}, it is sufficient to show that for each split pullback 
\[
\xymatrix{
C \ar[r]^{r}\ar@<0.5ex>[d]^{\gamma} & A \ar@<0.5ex>[d]^{\alpha}\\
D \ar@<0.5ex>[u]^{\delta} \ar[r]_{s} & B\ar@<0.5ex>[u]^{\beta}
}
\]
the morphisms $r$ and $\beta$ are jointly strongly epimorphic.
However this is an immediate consequence of Lemma~\ref{Lemma Subobject}, because if $f$ is a splitting of $s$, then the morphism~$r$ in the diagram
\[
\xymatrix{
C \ar[r]^{r}\ar@<0.5ex>[d]^{\gamma} & A \ar@<0.5ex>[r]^{\alpha}\ar@<0.5ex>[d]^{f\alpha} & B \ar@<0.5ex>[l]^{\beta}\ar@<0.5ex>[d]^{f}\\
D\ar@<0.5ex>[u]^{\delta} \ar@{=}[r] & D \ar@{=}[r]\ar@<0.5ex>[u]^{\beta s} & D\ar@<0.5ex>[u]^{s}
}
\]
is the kernel of $\alpha$ in $\Pt_D(\C)$.
\end{proof}

\begin{lemma}
\label{lemma: pulling back along pullback-stable extremal}
Let $\C$ be an arbitrary category with pullbacks and let $q\colon D \to B$ be a pullback-stable strong epimorphism.
Then the functor $q^{*}\colon (\C \downarrow B) \to (\C \downarrow D)$ and hence the functor $q^{*} \colon \Pt_B(\C) \to \Pt_D(\C)$ reflects: 
\begin{enumerate}
\item isomorphisms; 
\item monomorphisms; 
\item jointly strongly epimorphic cospans.
\end{enumerate}
\end{lemma}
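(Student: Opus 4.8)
The plan is to treat $(\C\downarrow B)$ and $\Pt_B(\C)$ simultaneously: write $\mathcal E$ for either of them, $\mathcal F$ for the corresponding $(\C\downarrow D)$ or $\Pt_D(\C)$, and $W\colon \mathcal E\to\C$, $W'\colon\mathcal F\to\C$ for the faithful functors sending an object to its underlying object in $\C$ (the domain of its structure map). The single geometric input I shall use is that, since $q$ is a pullback-stable strong epimorphism, for every object $E$ of $\mathcal E$ the canonical projection $\pi_E\colon W'(q^*E)\to W(E)$—which is precisely the pullback of $q$ along the structure map of $E$—is a strong epimorphism, and that these projections are natural, i.e.\ $W(\phi)\pi_E=\pi_{E'}W'(q^*\phi)$ for every $\phi\colon E\to E'$. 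I shall also use that monomorphisms and isomorphisms in $\mathcal E$ and $\mathcal F$ are detected and preserved on underlying objects, a standard property of slices and of the fibres $\Pt_X(\C)$ (kernel pairs there being computed as in $\C$). Granting this, $q^*$ is faithful: if $q^*\phi=q^*\psi$ then $W(\phi)\pi_E=W(\psi)\pi_E$ by naturality, and since $\pi_E$ is a strong, hence an ordinary, epimorphism we get $W(\phi)=W(\psi)$, whence $\phi=\psi$ by faithfulness of $W$. Statement (b) is then immediate, because faithful functors reflect monomorphisms.

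For (a), suppose $q^*\phi$ is an isomorphism. Being in particular a monomorphism, $q^*\phi$ forces $\phi$ to be a monomorphism by (b), and hence $W(\phi)$ is monic. On the other hand $W(\phi)\pi_E=\pi_{E'}W'(q^*\phi)$ is the composite of the strong epimorphism $\pi_{E'}$ with an isomorphism, hence a strong epimorphism; by the cancellation property that $gf$ strongly epic forces $g$ strongly epic, $W(\phi)$ is a strong epimorphism. A morphism of $\C$ that is simultaneously monic and strongly epic is an isomorphism, so $W(\phi)$ is an isomorphism, and detecting isomorphisms on underlying objects shows $\phi$ to be an isomorphism in $\mathcal E$.

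For (c), since $\mathcal E$ has pullbacks I may invoke Lemma~\ref{lemma: strong implies extremal} and reduce to proving that $(u,v)$ is jointly extremally epic whenever $(q^*u,q^*v)$ is jointly strongly epic. So suppose $(u,v)$ factors through a monomorphism $m$ of $\mathcal E$. Applying $q^*$, the pair $(q^*u,q^*v)$ factors through $q^*m$, which is again a monomorphism since $W'(q^*m)$ is the pullback of the monomorphism $W(m)$; as $(q^*u,q^*v)$ is jointly strongly, hence jointly extremally, epic, $q^*m$ is an isomorphism, so $m$ is an isomorphism by part (a). Thus $(u,v)$ is jointly extremally epic, as required. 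Alternatively one may argue entirely in $\C$: by Lemma~\ref{Lemma Subobject} the pair $(q^*u,q^*v)$ is jointly strongly epic in $\C$, so Lemma~\ref{lemma: composite of strong/extremal with jointly strong/extremal} applied to the strong epimorphisms $\pi_X$, $\pi_Y$, $\pi_Z$ exhibits $(u,v)$ as jointly strongly epic in $\C$, and this transfers back to $\mathcal E$ via the detection of monomorphisms and isomorphisms on underlying objects.

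I expect the main obstacle to be the careful bookkeeping in passing between $\mathcal E$-level and $\C$-level statements: one must verify that monomorphisms and isomorphisms in the fibres $\Pt_B(\C)$—not only in the slices $(\C\downarrow B)$—are detected and preserved correctly on underlying objects (in particular that $q^*$ preserves monomorphisms), since it is exactly this that lets a single underlying-object argument cover both halves of the statement. The genuinely non-formal ingredient, namely that pullback-stability forces the projections $\pi_E$ to be strong epimorphisms, is needed only to launch the faithfulness step; everything thereafter is formal manipulation of strong epimorphisms and monomorphisms.
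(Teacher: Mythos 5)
Your proof is correct, but for (a) and (b) it follows a genuinely different route from the paper's. The paper handles (a) by citing Proposition~1.6 of~\cite{Janelidze-Sobral-Tholen}, which rests on the functors $q^{*}\colon (\C\downarrow B)\to(\C\downarrow D)$ being right adjoints; it then deduces (b) from (a), using that $q^{*}$ preserves kernel pairs and that monomorphisms are exactly the arrows whose kernel pair projections are isomorphisms; and it obtains (c) by invoking Lemma~\ref{Lemma Reg Protomodular}. You instead argue directly at the level of underlying objects: pullback-stability makes the projections $\pi_{E}\colon W'(q^{*}E)\to W(E)$ strong (hence ordinary) epimorphisms, which yields faithfulness of $q^{*}$; faithfulness gives (b) immediately; and (a) then follows from (b) combined with the cancellation property of strong epimorphisms ($gf$ strong epic forces $g$ strong epic) and the fact that a monic strong epimorphism is an isomorphism. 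For (c), your first argument is exactly the proof of Lemma~\ref{Lemma Reg Protomodular} unfolded for $F=q^{*}$, so there the two proofs agree in substance. Your route buys self-containedness (no appeal to the descent-theoretic citation or to adjointness), the stronger observation that $q^{*}$ is faithful, and the fact that (b) needs only faithfulness rather than conservativity; the paper's route buys brevity and places (a) in its natural descent-theoretic context. The bookkeeping facts you flag are indeed valid in the stated generality: since $\C$ has pullbacks, pullbacks (hence kernel pairs) in $(\C\downarrow X)$ and in $\Pt_{X}(\C)$ are computed as in $\C$, so the forgetful functors to $\C$ are faithful, preserve and reflect monomorphisms, and create isomorphisms.
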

\begin{proof}
Point (a) is well known~\cite[Proposition~1.6]{Janelidze-Sobral-Tholen}, and depends on the fact that the functors $q^{*}\colon (\C \downarrow B) \to (\C \downarrow D)$ are right adjoints. (b) follows immediately from (a), since $q^{*}$ preserves limits, and monomorphisms are precisely the arrows whose kernel pair projections are isomorphisms. (c) follows from Lemma~\ref{Lemma Reg Protomodular}.
\end{proof}

Recall that, in a category $\C$ with a terminal object $1$, an object $D$ \defn{has global support} when the unique morphism ${D\to 1}$ is a pullback-stable strong epimorphism.
We write $\Inh(\C)$ for the full subcategory of $\C$ determined by the objects with global support.

\begin{lemma}\label{lemma: pullback functors split mono}
Let $\C$ be a category with a terminal object.
Let $D$ be an object with global support for which $\Pt_D(\C)$ is protomodular.
For each morphism $q \colon D\to B$ the pullback functor $q^{*}\colon {\Pt_B(\C) \to \Pt_D(\C)}$ reflects isomorphisms. 
\end{lemma}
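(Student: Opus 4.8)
The plan is to factorise the arbitrary morphism $q$ as a split monomorphism followed by a pullback-stable strong epimorphism, so that $q^{*}$ becomes a composite of two functors each of which reflects isomorphisms by one of the two preceding lemmas.

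Concretely, I would factor $q$ through the product $D\times B$ as
\[
D \xrightarrow{\langle 1_{D},q\rangle} D\times B \xrightarrow{\pi_{2}} B,
\]
where $\langle 1_{D},q\rangle$ is split by the first projection $\pi_{1}\colon D\times B\to D$, and is therefore a split monomorphism. Note first that this product exists: it is the pullback of $!_{D}\colon D\to 1$ along $!_{B}\colon B\to 1$, and since $D$ has global support the morphism $!_{D}$ is a pullback-stable strong epimorphism, so in particular all its pullbacks exist. The same hypothesis shows that $\pi_{2}$, being the pullback of $!_{D}$ along $!_{B}$, is itself a pullback-stable strong epimorphism: any pullback of $\pi_{2}$ is, by the pasting lemma for pullback squares, again a pullback of $!_{D}$, hence a strong epimorphism.

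Since change of base is pseudofunctorial, the pasting lemma gives a natural isomorphism $q^{*}\cong \langle 1_{D},q\rangle^{*}\circ \pi_{2}^{*}$, where $\pi_{2}^{*}\colon \Pt_{B}(\C)\to \Pt_{D\times B}(\C)$ and $\langle 1_{D},q\rangle^{*}\colon \Pt_{D\times B}(\C)\to \Pt_{D}(\C)$. I would then apply Lemma~\ref{lemma: pulling back along pullback-stable extremal}~(a) to the pullback-stable strong epimorphism $\pi_{2}$ to see that $\pi_{2}^{*}$ reflects isomorphisms, and Lemma~\ref{lemma: split monos} to the split monomorphism $\langle 1_{D},q\rangle$ to see that $\langle 1_{D},q\rangle^{*}$ reflects isomorphisms. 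The latter applies precisely because the domain of this split monomorphism is $D$, so that the protomodularity assumption it requires is exactly our hypothesis that $\Pt_{D}(\C)$ be protomodular.

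A composite of functors that reflect isomorphisms again reflects isomorphisms: if $q^{*}(\phi)\cong \langle 1_{D},q\rangle^{*}(\pi_{2}^{*}(\phi))$ is an isomorphism, then $\pi_{2}^{*}(\phi)$ is one, and hence so is $\phi$. Thus $q^{*}$ reflects isomorphisms, as required. The only real idea in the argument is the choice of factorisation together with the observation that $\pi_{2}$ inherits pullback-stable strong epimorphy from the global support of $D$; everything else is a formal consequence of the two preceding lemmas, so I do not anticipate a genuine obstacle.
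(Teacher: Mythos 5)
Your proposal is correct and follows essentially the same route as the paper: the same factorisation $q=\pi_{2}\circ\gauche 1_{D},q\droite$ through $D\times B$, with $\pi_{2}$ a pullback-stable strong epimorphism because $D$ has global support, and the same appeal to Lemma~\ref{lemma: split monos} and Lemma~\ref{lemma: pulling back along pullback-stable extremal}~(a) for the two factors. The only additions are routine bookkeeping (existence of $D\times B$, pseudofunctoriality of change of base) that the paper leaves implicit.
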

\begin{proof}
Let $q\colon D \to B$ be a morphism in $\C$ such that $D\to 1$ is a pullback-stable strong epimorphism, and $\Pt_D(\C)$ is protomodular.
The result follows from Lemma~\ref{lemma: split monos}
and~\ref{lemma: pulling back along pullback-stable extremal} since $q$ can be factored as in the diagram
\[
\xymatrix{
D \ar[rr]^{q} \ar[dr]_{\gauche 1_{D},q\droite} && B\\
& D\times B \ar[ur]_{\pi_2} &
}
\]
where $\gauche 1_{D},q\droite$ is a split monomorphism and $\pi_2$, being a pullback of $D \to 1$, is a pullback-stable strong epimorphism. 
\end{proof}

We obtain a generalisation of Theorem 3.11 in~\cite{B9}:

\begin{theorem}
\label{theorem: Mal'tsev coherent proto non pointed}
Let $\C$ be a Mal'tsev category such that, for any $X\in\C$, $\Pt_X(\C)$ is algebraically coherent.
Then the category $\Inh(\C)$ is protomodular.
In particular, if every object in~$\C$ admits a pullback-stable strong epimorphism to the terminal object, then $\C$ is protomodular.
\end{theorem}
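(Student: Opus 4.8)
The plan is to verify the two defining conditions of protomodularity for $\Inh(\C)$: that it has pullbacks of split epimorphisms along arbitrary maps, and that all change-of-base functors of its fibration of points reflect isomorphisms. The whole argument reduces to Lemma~\ref{lemma: pullback functors split mono}, whose hypotheses are precisely tailored to objects of global support; the role of $\Inh(\C)$ is to guarantee that the \emph{domain} of every morphism under consideration has global support, so that the lemma becomes applicable.

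First I would check that each fibre $\Pt_X(\C)$ is protomodular. It is pointed, its zero object being the point $(1_X, 1_X)$; it is algebraically coherent by hypothesis; and it is a Mal'tsev category, since $\Pt_X(\C) = ((X,1_X)\downarrow(\C\downarrow X))$ (as in the proof of Corollary~\ref{Corollary Fibres}) is a coslice of a slice of the Mal'tsev category $\C$, and the Mal'tsev property is inherited by slices and coslices: an underlying reflexive relation is an equivalence relation in $\C$, and the extra structure maps are automatically compatible with the (co)slicing data. Theorem~\ref{theorem: Mal'tsev implies protomodular} then yields that $\Pt_X(\C)$ is protomodular for every $X$ in $\C$.

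Next I would identify the fibration of points of $\Inh(\C)$ with the restriction of that of $\C$. The key observation is that a split epimorphism is a pullback-stable strong epimorphism, and that composites and pullbacks of pullback-stable strong epimorphisms are again such. Hence, whenever $B\in\Inh(\C)$, the total object of any point $(p\colon A\to B, s\colon B\to A)$ also lies in $\Inh(\C)$, because $A\to B\to 1$ is a composite of pullback-stable strong epimorphisms; this gives $\Pt_B(\Inh(\C)) = \Pt_B(\C)$. The same stability shows that the pullback in $\C$ of a split epimorphism along any morphism with domain of global support again has global support, so $\Inh(\C)$ possesses all pullbacks of split epimorphisms along arbitrary maps, computed as in $\C$, and the change-of-base functors of its fibration of points coincide with those of $\C$.

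Finally I would conclude. For an arbitrary morphism $q\colon D\to B$ of $\Inh(\C)$, the object $D$ has global support and $\Pt_D(\C)$ is protomodular by the first step, so Lemma~\ref{lemma: pullback functors split mono} gives that $q^{*}\colon \Pt_B(\C)\to \Pt_D(\C)$ reflects isomorphisms; through the identification above this is exactly the corresponding change-of-base functor of $\Inh(\C)$. Thus $\Inh(\C)$ is protomodular. The final assertion is then immediate: if every object admits a pullback-stable strong epimorphism to the terminal object, then $\Inh(\C)=\C$, so $\C$ itself is protomodular. I expect the only delicate point to be the bookkeeping of the third paragraph, namely checking that $\Inh(\C)$ is genuinely closed under the relevant fibres and pullbacks, so that Lemma~\ref{lemma: pullback functors split mono}, which is proved inside $\C$, transfers into a statement about the fibration of points of $\Inh(\C)$; the protomodularity of the fibres and the application of the lemma are then routine.
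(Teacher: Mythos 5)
Your proposal is correct and follows essentially the same route as the paper: the fibres $\Pt_X(\C)$ are pointed, Mal'tsev and algebraically coherent, hence protomodular by Theorem~\ref{theorem: Mal'tsev implies protomodular}, and then Lemma~\ref{lemma: pullback functors split mono} gives that change-of-base along any morphism out of an object with global support reflects isomorphisms. The closure bookkeeping in your third paragraph (points over, and pullbacks of split epimorphisms along, objects of $\Inh(\C)$ stay in $\Inh(\C)$) is exactly the detail the paper's two-line proof leaves implicit, and your verification of it is sound.
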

\begin{proof}
Recall~\cite[Example 2.2.15]{Borceux-Bourn} that if $\C$ is a Mal'tsev category, then for any~$X$ in~$\C$ the category $\Pt_X(\C)$ is also Mal'tsev. The proof now follows from Theorem~\ref{theorem: Mal'tsev implies protomodular} and Lemma~\ref{lemma: pullback functors split mono}.
\end{proof}

\begin{remark}
The above theorem together with Corollary~\ref{Corollary Fibres} implies that any algebraically coherent Mal'tsev category which has  an initial object with global support is protomodular.
\end{remark}

\subsection{Higgins commutators, normal subobjects and normal closures.}
We now describe the effect of coherent functors on Higgins commutators (see Subsection~\ref{Subsection Diamond}), normal subobjects and normal closures.

\begin{proposition}\label{prop:pres.hig}
Let $F\colon \C \to \D$ be a coherent functor between regular pointed categories with binary coproducts.
Then $F$ preserves Higgins commutators of arbitrary cospans.
\end{proposition}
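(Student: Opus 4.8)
The plan is to express both $F([K,L])$ and $[F(K),F(L)]$ as regular images of suitable composites emanating from co-smash products, and then to identify these images by means of a comparison regular epimorphism between the two co-smash products. Throughout I will use that a coherent functor between regular categories is regular, so that $F$ preserves finite limits, regular epimorphisms, monomorphisms, and hence (regular epi, mono)-factorisations and regular images; and that, by Proposition~\ref{Proposition binary sums}, the comparison morphism $c\DefEq\mus{F(\iota_K)}{F(\iota_L)}\colon F(K)+F(L)\to F(K+L)$ is a strong (equivalently, regular) epimorphism.

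First I would record two identities by comparing composites with the coproduct injections and using that $F$ preserves products (so that $F(K\times L)$ is canonically $F(K)\times F(L)$): namely $F(\sigma_{K,L})\circ c=\sigma_{F(K),F(L)}$ and $F(\mus{k}{l})\circ c=\mus{F(k)}{F(l)}$. Since $F$ preserves kernels, $F(\iota_{K,L})\colon F(K\cosmash L)\to F(K+L)$ is the kernel of $F(\sigma_{K,L})$, so that $F(K\cosmash L)$ may be identified with $\Ker(F(\sigma_{K,L}))$.

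The key step is to produce the comparison $\bar c\colon F(K)\cosmash F(L)\to F(K\cosmash L)$ and to show that it is a regular epimorphism. From $\sigma_{F(K),F(L)}=F(\sigma_{K,L})\circ c$ it follows that $\Ker(\sigma_{F(K),F(L)})=F(K)\cosmash F(L)$ is precisely the pullback of the monomorphism $F(\iota_{K,L})=\Ker(F(\sigma_{K,L}))$ along $c$: the second projection of this pullback is $\iota_{F(K),F(L)}$, while the first is the induced map $\bar c$, characterised by $F(\iota_{K,L})\circ\bar c=c\circ\iota_{F(K),F(L)}$. Being the pullback of the regular epimorphism $c$ along a monomorphism, $\bar c$ is itself a regular epimorphism by pullback-stability of regular epimorphisms in a regular category. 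This is the crux of the argument and the step I expect to require the most care.

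Finally I would conclude by a routine image computation. Since $F$ is regular, $F([K,L])=\Im\bigl(F(\mus{k}{l})\circ F(\iota_{K,L})\bigr)$, whereas $[F(K),F(L)]=\Im\bigl(\mus{F(k)}{F(l)}\circ\iota_{F(K),F(L)}\bigr)=\Im\bigl(F(\mus{k}{l})\circ F(\iota_{K,L})\circ\bar c\bigr)$ by the two identities above together with the defining property of $\bar c$. As precomposition with the regular epimorphism $\bar c$ leaves a regular image unchanged (the regular-epi part of a factorisation is simply extended by $\bar c$), the two images coincide, yielding $F([K,L])=[F(K),F(L)]$, as required.
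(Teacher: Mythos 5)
Your proof is correct and follows essentially the same route as the paper: both arguments use coherence to get that the comparison $F(K)+F(L)\to F(K+L)$ is a regular epimorphism, deduce that the induced map $F(K)\cosmash F(L)\to F(K\cosmash L)$ is a regular epimorphism because it sits in a pullback square over that comparison, and then conclude by pasting with the image factorisation defining the Higgins commutator, since precomposition with a regular epimorphism does not change regular images. The only difference is presentational: the paper phrases the pullback step diagrammatically, while you spell out the identification of $F(K)\cosmash F(L)$ as the pullback of $F(\iota_{K,L})$ along the comparison morphism.
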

\begin{proof}
Consider a cospan $(k\colon K\to X, l\colon L\to X)$ in~$\C$ and the induced diagram~\eqref{diag:higgins} of page~\pageref{diag:higgins}.
Since $F$ is coherent, it preserves finite limits and the comparison morphism ${F(K)+F(L) \to F(K+L)}$ is a regular epimorphism.
Hence, the leftmost vertical arrow in the diagram
$$ \xymatrix@C=10ex{
	F(K) \cosmash F(L) \pullback \ar@{ |>->}[r]^{\iota_{F(K),F(L)}} \ar@{-{ >>}}[d]
		& F(K)+F(L) \ar@{-{ >>}}[d] \ar[r]^{\sigma_{F(K),F(L)}}
		& F(K) \times F(L) \ar[d]^-{\cong} \\
	F(K \cosmash L) \ar@{ |>->}[r]_{F(\iota_{K,L})} & F(K+L) \ar[r]_{F(\sigma_{K,L})}
		& F(K \times L)
} $$
is a regular epimorphism.
Finally, applying $F$ to Diagram (\ref{diag:higgins}) and pasting with the left hand square above, we obtain the square
$$ \xymatrix@C=10ex{
	F(K) \cosmash F(L) \ar@{ |>->}[r]^{\iota_{F(K),F(L)}} \ar@{-{ >>}}[d]
		& F(K)+F(L) \ar[d]^{\mus{F(k)}{F(l)}} \\
	F([K,L]) \ar@{ >->}[r] & F(X)
} $$
showing us that $F([K,L]) \cong [F(K),F(L)]$.
\end{proof}

As shown in~\cite{CigoliMontoliCharSubobjects}, this implies that the derived subobject $[X,X]$ of an object $X$ in an algebraically coherent semi-abelian category is always characteristic.
Recall from~\cite{Actions,MM-NC} that, for any subobject $K\leq X$ in a semi-abelian category, its normal closure in $X$ may be obtained as the join $K\join [K,X]$.

\begin{corollary}\label{Corollary normal closure semi}
Any coherent functor between semi-abelian categories preserves normal closures.
\end{corollary}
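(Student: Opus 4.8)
The plan is to reduce the statement to two preservation properties already in hand: preservation of Higgins commutators (Proposition~\ref{prop:pres.hig}) and preservation of binary joins of subobjects. The starting observation is the formula recalled immediately above, namely that in any semi-abelian category the normal closure of a subobject $K\leq X$ is the join $K\join[K,X]$. Writing $F\colon\C\to\D$ for the coherent functor, I would therefore compute $F(K\join[K,X])$ and check that it coincides with $F(K)\join[F(K),F(X)]$, which the same formula, now applied in $\D$, identifies with the normal closure of $F(K)$ in $F(X)$.

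First I would note that $F$, being coherent between regular categories, is regular, since semi-abelian categories are in particular regular; hence Proposition~\ref{Proposition joins} applies and $F$ preserves binary joins of subobjects. Because $F$ also preserves monomorphisms (it preserves kernel pairs), $F(K)$ and $F([K,X])$ are genuine subobjects of $F(X)$, and preservation of joins gives $F(K\join[K,X])=F(K)\join F([K,X])$ in the subobject lattice of $F(X)$. Then I would invoke Proposition~\ref{prop:pres.hig} to replace $F([K,X])$ by $[F(K),F(X)]$, obtaining $F(K\join[K,X])=F(K)\join[F(K),F(X)]$, which is precisely the desired equality.

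I do not expect a real obstacle here, as the substance has been absorbed into Propositions~\ref{Proposition joins} and~\ref{prop:pres.hig}, so that the argument is essentially a one-line composition of the two. The only points requiring attention are bookkeeping ones: verifying that the normal-closure formula is legitimately available in both $\C$ and $\D$ (guaranteed by their being semi-abelian), and confirming that a coherent functor preserves monomorphisms, so that all the joins and commutators are genuinely formed in the subobject lattice of $F(X)$ rather than merely up to the comparison maps.
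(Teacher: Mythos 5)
Your proof is correct and is essentially the paper's own argument: the paper likewise combines the formula for the normal closure as $K\join[K,X]$ with Proposition~\ref{Proposition joins} (preservation of binary joins, via regularity of a coherent functor between regular categories) and Proposition~\ref{prop:pres.hig} (preservation of Higgins commutators). The bookkeeping points you flag (preservation of monomorphisms, availability of the formula in both categories) are exactly what the paper leaves implicit.
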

\begin{proof}
This is Proposition~\ref{Proposition joins} combined with Proposition~\ref{prop:pres.hig}.
\end{proof}

\subsection{Ideal-determined categories.}
This result can be proved in a more general context which, for instance, includes all ideal-determined categories~\cite{Janelidze-Marki-Tholen-Ursini}. Working towards Theorem~\ref{thm summary}, we first prove some preliminary results. We start with Lemma~\ref{lemma: normal closure} which is a general version of~\cite[Lemma~4.10]{MFVdL3}.

We assume that $\C$ is a pointed finitely complete and finitely cocomplete category. Recall the following list of basic properties~\cite{Janelidze-Marki-Tholen, Janelidze-Marki-Tholen-Ursini, MM-NC}.
\begin{itemize}
\item [(A1)] $\C$ has pullback-stable (normal-epi, mono) factorisations;
\item [(A2)] in $\C$, regular images of kernels are kernels;
\item [(A3)] Hofmann's axiom~\cite{Janelidze-Marki-Tholen}.
\end{itemize}
The category $\C$ is semi-abelian if and only if (A1), (A2) and (A3) hold. When~$\C$ satisfies just (A1) and (A2) it is called \defn{ideal-determined}~\cite{Janelidze-Marki-Tholen-Ursini}. In it is called \defn{normal} when it satisfies (A1). Indeed, it is well-known and easily seen that this happens precisely when $\C$ is regular and regular epimorphisms and normal epimorphisms coincide in $\C$, which is the original definition given in~\cite{ZJanelidze-Snake}.

\begin{lemma}
\label{lemma: normal closure}
Let $\C$ be pointed, finitely cocomplete and regular, satisfying \Atwo.
For a monomorphism $m\colon M\to X$, the monomorphism $\overline{m}\colon M_X\to X$ in the diagram
\[
\xymatrix{
0 \ar[r] & X\flat M \ar@{{ |>}->}[r]^-{\kappa_{X,M}}\ar@{-{ >>}}[d]_{\theta} & X + M \ar@{-{ >>}}@<0.5ex>[r]^-{\mus{1_{X}}{0}}\ar@{-{ >>}}[d]^{\mus{1_{X}}{m}} & X \ar@<0.5ex>[l]^-{\iota_X} \ar[r] & 0\\
& M_{X} \ar@{{ |>}->}[r]_-{\overline{m}} & X,
}
\]
where $\overline m \theta$ is the factorisation of $\mus{1_{X}}{m}\kappa_{X,M}$ as a regular epimorphism followed by a monomorphism, is the normal closure of $m$.
\end{lemma}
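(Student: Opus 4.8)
The plan is to show that $\overline m\colon M_X\to X$ has the three defining features of the normal closure of $m$: it is a normal subobject of $X$, it contains $M$, and it is contained in every normal subobject of $X$ which contains $M$. Since the construction produces $\overline m$ explicitly, existence is automatic, and the whole argument is a matter of checking these three properties using the universal properties of kernels and coproducts together with the regular (regular epi, mono)-factorisation that defines $\theta$ and $\overline m$.

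First, for normality, I would use the hypothesis \Atwo, which is the only place it is needed. By construction $X\flat M=\Ker\mus{1_X}{0}$ is a kernel in $X+M$, and $\overline m$ is the monomorphism part of the factorisation of $\mus{1_X}{m}\kappa_{X,M}$ as a regular epimorphism followed by a monomorphism; that is, $M_X=\Im(\mus{1_X}{m}\kappa_{X,M})$ is the regular image of the kernel $X\flat M$ along $\mus{1_X}{m}$. Axiom \Atwo states exactly that a regular image of a kernel is again a kernel, so $\overline m$ is normal in $X$.

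Second, to see $M\leq M_X$, I would observe that $\mus{1_X}{0}\iota_M=0$, so the coproduct injection $\iota_M\colon M\to X+M$ factors through the kernel $\kappa_{X,M}$, say as $\iota_M=\kappa_{X,M}\eta$ for a unique $\eta\colon M\to X\flat M$. Composing with $\mus{1_X}{m}$ gives $m=\mus{1_X}{m}\iota_M=\mus{1_X}{m}\kappa_{X,M}\eta=\overline m\theta\eta$, so $m$ factors through $\overline m$ and hence $M\leq M_X$. Minimality is where the actual content sits: let $N\leq X$ be a normal subobject containing $M$ and write $N=\Ker q$ for some $q\colon X\to Q$. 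Since $M\leq N$ we have $qm=0$, and therefore $q\mus{1_X}{m}=q\mus{1_X}{0}$ as morphisms out of $X+M$, because the two agree on both coproduct components (on $X$ both restrict to $q$, on $M$ both restrict to $0$). Precomposing with $\kappa_{X,M}$ and using $\mus{1_X}{0}\kappa_{X,M}=0$ yields $q\mus{1_X}{m}\kappa_{X,M}=0$, that is $q\overline m\theta=0$; as $\theta$ is a regular epimorphism, hence epic, this forces $q\overline m=0$, so $\overline m$ factors through $\Ker q=N$ and thus $M_X\leq N$.

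The main obstacle is purely bookkeeping: one must make sure that the regular-image construction of $M_X$ fits the exact shape required by \Atwo, and that $\theta$ is genuinely a regular (hence ordinary) epimorphism so that it may be cancelled in the final step. Everything else follows formally from the universal properties of $\kappa_{X,M}$ and of the coproduct $X+M$, with regularity of $\C$ supplying the factorisation defining $\overline m$ and $\theta$ in the first place.
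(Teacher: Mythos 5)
Your proof is correct, and its minimality argument takes a genuinely different, more elementary route than the paper's. The first two parts (normality of $\overline{m}$ via \Atwo\ applied to the image of the kernel $\kappa_{X,M}$ along the split, hence regular, epimorphism $\mus{1_{X}}{m}$, and the factorisation $m=\overline{m}\theta\eta_{M}$) coincide with the paper's. For minimality, however, the paper takes a normal monomorphism $k=\Ker f$ through which $m$ factors, builds the pullback $X\times_{Y}X$, produces a morphism of split extensions $\varphi\colon X\flat K\to K$ with $\varphi\eta_{K}=1_{K}$, deduces that the construction applied to $k$ returns $k$ itself (i.e.\ $k\varphi$ is the (regular epi, mono)-factorisation of $\mus{1_{X}}{k}\kappa_{X,K}$), and then concludes by functoriality of regular images along $X\flat t$. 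You bypass all of that: writing the normal subobject as $N=\Ker q$, you observe that $qm=0$ forces $q\mus{1_{X}}{m}=q\mus{1_{X}}{0}$ by the coproduct's universal property, hence $q\overline{m}\theta=q\mus{1_{X}}{0}\kappa_{X,M}=0$, and epi-cancellation of $\theta$ gives $q\overline{m}=0$, so $\overline{m}$ factors through $\Ker q=N$. This direct argument needs only the kernel property, the coproduct, and the fact that $\theta$ is epic, so it is shorter and uses less machinery; the paper's longer detour has the side benefit of establishing the structural fact that the construction $(-)_{X}$ fixes normal monomorphisms ($K_{X}\cong K$ for $k$ normal), but that is not needed for the statement itself.
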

\begin{proof}
First note that $\overline{m}$ may be obtained as the image of $\kappa_{X,M}$ along $\mus{1_{X}}{m}$. This monomorphism is normal by \Atwo. Let $\eta_M \colon M\to X\flat M$ be the unique morphism such that
\[
\kappa_{X,M} \eta_M = \iota_M \colon M \to X+M.
\]
$m$ factors through~$\overline{m}$ because $\overline{m} \theta \eta_M = \mus{1_{X}}{m} \kappa_{X,M}\eta_M = \mus{1_{X}}{m}\iota_M=m$. We thus have to show that $\overline{m}$ is the \emph{smallest} normal monomorphism through which $m$ factors.

Let $k\colon K\to X$ be a normal monomorphism and let $f\colon X \to Y$ be a morphism such that $k$ is the kernel of $f$.
Consider the diagram
\[
\xymatrix{
K \ar[d]_-{\eta_K}\ar[dr]^{\iota_K}\\
X\flat K \ar@{{ |>}->}[r]^-{\kappa_{X,K}} \ar[d]_-{\varphi} & X + K \ar@<0.5ex>[r]^-{\mus{1_{X}}{0}}\ar[d]_-{\left\links \begin{smallmatrix}1_{X} & k\\ 1_{X} &0 \end{smallmatrix}\right\rechts} & X \ar@<0.5ex>[l]^-{\iota_X}\ar@{=}[d]\\
K \ar@{{ |>}->}[r]_-{\gauche k,0\droite} \ar@{=}[d] & X\times_Y X \ar@<0.5ex>[r]^-{\pi_2} \ar[d]_{\pi_1} & X \ar@<0.5ex>[l]^-{\links 1_{X},1_{X}\rechts}\ar[d]^{f}\\
K \ar@{{ |>}->}[r]_{k} & X \ar[r]_{f} & Y
}
\]
where
\begin{itemize}
\item $\eta_K$ is the unique morphism making the triangle at the top commute;
\item the bottom right square is a pullback;
\item $\gauche k,0\droite$ is the kernel of $\pi_2$;
\item $\varphi$ is the unique morphism making the top a morphism of split extensions.
\end{itemize}
Since $\gauche k,0\droite$ is a monomorphism it follows that $\varphi \eta_K = 1_K$ and so in the commutative diagram
\[
\xymatrix{
X\flat K \ar[r]^{\kappa_{X,K}} \ar[d]_{\varphi} & X + K \ar[d]^{\mus{1_{X}}{k}}\\
K \ar[r]_{k} & X
}
\]
$k\varphi$ is the (regular epi, mono)-factorisation of $\mus{1_{X}}{k}\kappa_{X,K}$. Now suppose that there exists $t\colon M\to K$ such that $kt=m$. Since there exists a unique morphism $X\flat t\colon X\flat M \to X\flat K$ making the diagram
\[
\xymatrix{
X\flat M \ar[r]^-{\kappa_{X,M}}\ar[d]_{X\flat t} & X + M \ar@<0.5ex>[r]^-{\mus{1_{X}}{0}}\ar[d]_{1_{X}+t} & X \ar@<0.5ex>[l]^-{\iota_X}\ar@{=}[d]\\
X\flat K \ar[r]_-{\kappa_{X,K}} & X + K \ar@<0.5ex>[r]^-{\mus{1_{X}}{0}} & X \ar@<0.5ex>[l]^-{\iota_X}
}
\]
a morphism of split extensions, functoriality of regular images tells us that $\overline{m}$ factors through $k$
\[
\xymatrix@R=3ex@C=3ex{
& X\flat M \ar[dl]_{X\flat t} \ar[rr]^{\kappa_{X,K}} \ar@{-{ >>}}[dd]|{\hole}^(0.3){\theta}& & X+M \ar[dl]_{1_{X}+t}\ar[dd]^{\mus{1_{X}}{m}}\\
X\flat K \ar[rr]_(0.7){\kappa_{X,K}} \ar@{-{ >>}}[dd]_{\varphi} && X + K \ar[dd]^(0.3){\mus{1_{X}}{k}}\\
& M_X \ar@{{ |>}->}[rr]_(0.3){\overline{m}}|(.47){\hole}\ar@{.>}[dl] && X\ar@{=}[dl]\\
K \ar@{{ |>}->}[rr]_{k} && X
}
\]
as required.
\end{proof}

\begin{proposition}\label{proposition: coherent preserves normal closure}
Let $F$ be a functor between pointed regular categories with finite coproducts satisfying \Atwo.
If $F$ is coherent, then $F$ preserves normal closures.
\end{proposition}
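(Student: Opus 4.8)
The plan is to combine the explicit description of normal closures from Lemma~\ref{lemma: normal closure} with the strategy already used for Higgins commutators in Proposition~\ref{prop:pres.hig}. For a monomorphism $m\colon M\to X$ in $\C$, Lemma~\ref{lemma: normal closure} exhibits the normal closure $\overline{m}\colon M_X\to X$ as the (regular epi, mono)-factorisation $\overline{m}\circ\theta$ of the composite $\mus{1_X}{m}\kappa_{X,M}\colon X\flat M\to X$. Since a coherent functor between regular categories is regular, $F$ preserves finite limits and (regular epi, mono)-factorisations, and $F(m)$ is again a monomorphism. Applying $F$ to the defining diagram, $F(\overline{m})\circ F(\theta)$ is therefore the (regular epi, mono)-factorisation of $F\mus{1_X}{m}\circ F(\kappa_{X,M})$, where $F(\kappa_{X,M})$ is the kernel of $F\mus{1_X}{0}$. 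On the other hand, applying Lemma~\ref{lemma: normal closure} in $\D$ to the monomorphism $F(m)$, the normal closure of $F(m)$ is the (regular epi, mono)-factorisation of $\mus{1_{F(X)}}{F(m)}\kappa_{F(X),F(M)}$. The whole proof reduces to identifying these two factorisations.

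The bridge between them is the coproduct comparison $c\DefEq\mus{F(\iota_X)}{F(\iota_M)}\colon F(X)+F(M)\to F(X+M)$. The injections into $X+M$ form a jointly strongly epimorphic pair, which $F$ preserves, so by Lemma~\ref{lemma: jointly becomes single} the map $c$ is a strong epimorphism, hence a regular epimorphism since $\D$ is regular. A direct computation on coproduct injections gives $F\mus{1_X}{0}\circ c=\mus{1_{F(X)}}{0}$ and $F\mus{1_X}{m}\circ c=\mus{1_{F(X)}}{F(m)}$. In particular, the kernel $\kappa_{F(X),F(M)}\colon F(X)\flat F(M)\to F(X)+F(M)$ of $\mus{1_{F(X)}}{0}$ is also the kernel of $F\mus{1_X}{0}\circ c$; consequently the square with top $\kappa_{F(X),F(M)}$, bottom $F(\kappa_{X,M})$, right $c$ and left the induced map $\phi\colon F(X)\flat F(M)\to F(X\flat M)$ is a pullback. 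Pullback-stability of regular epimorphisms in $\D$ then forces $\phi$ to be a regular epimorphism.

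It remains to paste everything together. Using the commutativity of the pullback square together with the identity $\mus{1_{F(X)}}{F(m)}=F\mus{1_X}{m}\circ c$, I would compute
\[
\mus{1_{F(X)}}{F(m)}\,\kappa_{F(X),F(M)}=F\mus{1_X}{m}\circ F(\kappa_{X,M})\circ\phi=F(\overline{m})\circ\bigl(F(\theta)\circ\phi\bigr).
\]
Here $F(\overline{m})$ is a monomorphism, while $F(\theta)\circ\phi$ is a composite of regular epimorphisms and hence a regular epimorphism. By uniqueness of (regular epi, mono)-factorisations, this is precisely the factorisation computing the normal closure of $F(m)$ through Lemma~\ref{lemma: normal closure} in $\D$, so $F(\overline{m})$ represents the normal closure of $F(m)$; that is, $F$ sends the normal closure of $m$ to the normal closure of $F(m)$.

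The step I expect to be the main obstacle is isolating $\phi$ as a regular epimorphism: one must recognise $F(X)\flat F(M)$ as the pullback of the preserved kernel $F(\kappa_{X,M})$ along the comparison $c$, which rests on $F$ preserving the kernel defining $X\flat M$ together with the identity $F\mus{1_X}{0}\circ c=\mus{1_{F(X)}}{0}$. This is exactly the normal-closure counterpart of the regular-epimorphism argument in Proposition~\ref{prop:pres.hig}, and once it is in place the remaining verifications are entirely formal.
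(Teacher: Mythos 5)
Your proof is correct and follows essentially the same route as the paper's: both identify the square relating $\kappa_{F(X),F(M)}$ to $F(\kappa_{X,M})$ via the coproduct comparison as a pullback, deduce that the induced map $\phi$ (the paper's $h$) is a regular epimorphism from coherence plus pullback-stability, and conclude by uniqueness of (regular epi, mono)-factorisations combined with Lemma~\ref{lemma: normal closure} applied in both categories. The only cosmetic difference is that you obtain $\phi$ directly as the comparison into a pullback of kernels, whereas the paper constructs it as a morphism of split extensions and then observes the square is a pullback.
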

\begin{proof}
Let $m \colon M \to X$ be a monomorphism.
Consider the diagram
\[
\xymatrix@C=8ex{
F(X)\flat F(M) \ar[r]^-{\kappa_{F(X),F(M)}}\ar@{.>}[d]_{h} & F(X) + F(M) \ar@<0.5ex>[r]^-{\mus{1_{F(X)}}{0}} \ar@{.>}[d]_-{\mus{F(\iota_X)}{F(\iota_M)}} & F(X) \ar@<0.5ex>[l]^-{\iota_{F(X)}}\ar@{=}[d]\\
F(X\flat M) \ar[r]_-{F(\kappa_{X,M})}\ar[d]_{F(\theta)} & F(X + M) \ar@<0.5ex>[r]^-{F\mus{1_{X}}{0}}\ar[d]^{F\mus{1_{X}}{m}} & F(X) \ar@<0.5ex>[l]^-{F(\iota_X)}\\
F(M_X) \ar[r]_{F(\overline{m})} & F(X),
}
\]
where
\begin{itemize}
\item $\overline{m}\theta$ is the (regular epi, mono)-factorisation of $\mus{1_{X}}{m}\kappa_{X,M}$;
\item $h$ is the unique morphism making the upper part of the diagram into a morphism of split extensions---which exists since $F$ preserves limits.
\end{itemize}
Lemma~\ref{lemma: normal closure} tells us that $\overline{m}$ is the normal closure of $m$.
Since $F$ is coherent it follows by Proposition~\ref{Proposition binary sums} that the dotted middle arrow is a regular epimorphism.
Hence~$h$ is a regular epimorphism, because the top left square is a pullback.
Since~$F$ preserves (regular epi, mono)-factorisations it follows that $F(\overline{m}) (F(\theta) h)$ is the (regular epi, mono)-factorisation of
\[
F\mus{1_{X}}{m} \mus{F(\iota_X)}{F(\iota_M)} \kappa_{F(X),F(M)} = \mus{1_{F(X)}}{F(m)} \kappa_{F(X),F(M)}
\] 
and so, again by Lemma~\ref{lemma: normal closure}, $F(\overline{m})$ is the normal closure of $F(m)$.
\end{proof}

Recall that a functor between homological categories is said to be \defn{sequentially exact} if it preserves short exact sequences.

\begin{corollary}\label{Corollary cokernels}
Any regular functor which preserves normal closures and normal epimorphisms preserves all cokernels.
\end{corollary}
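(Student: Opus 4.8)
The plan is to reduce the cokernel of an arbitrary morphism to the cokernel of a monomorphism, and then to identify that cokernel with a normal epimorphism determined by a normal closure---precisely the data that $F$ is assumed to preserve.

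First I would take a morphism $f\colon A\to B$ and factor it as $f=me$, where $e$ is a regular epimorphism and $m\colon I\to B$ is a monomorphism; such a factorisation exists since the categories are regular. Because $e$ is an epimorphism, a morphism out of $B$ annihilates $f$ exactly when it annihilates $m$, so $\mathrm{coker}(f)=\mathrm{coker}(m)$. As $F$ is regular it preserves this factorisation, giving $F(f)=F(m)F(e)$ with $F(e)$ a regular epimorphism and $F(m)$ a monomorphism; hence likewise $\mathrm{coker}(F(f))=\mathrm{coker}(F(m))$. It therefore suffices to treat the case of a monomorphism $m$.

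Next I would write $q=\mathrm{coker}(m)\colon B\to C$ and use the structure of the ambient category: the kernel of $q$ is the normal closure $\overline m\colon \overline I\to B$ of $m$, and, being a normal epimorphism, $q$ is the cokernel of its own kernel, so that $\mathrm{coker}(m)=\mathrm{coker}(\overline m)$. Now I apply $F$. Since $F$ preserves normal closures, $F(\overline m)$ is the normal closure of $F(m)$; since $F$ preserves finite limits, $\Ker(F(q))=F(\Ker q)=F(\overline m)$; and since $F$ preserves normal epimorphisms, $F(q)$ is a normal epimorphism. Thus $F(q)$ is a normal epimorphism whose kernel is $F(\overline m)$, hence is the cokernel of $F(\overline m)$, which in turn equals the cokernel of $F(m)$ (again because $F(\overline m)$ is the normal closure of $F(m)$). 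Combining this with the reduction above yields $F(q)=\mathrm{coker}(F(m))=\mathrm{coker}(F(f))$, so $F$ carries the cokernel of $f$ to the cokernel of $F(f)$, as required.

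The two factorisation reductions are harmless bookkeeping; the crux lies in the two purely categorical facts invoked in the middle step---that the kernel of the cokernel of a monomorphism is its normal closure, and that a normal epimorphism is the cokernel of its kernel. These encode that normal subobjects coincide with kernels in the ambient (normal, respectively ideal-determined) setting, and they are exactly what allows the two hypotheses ``preserves normal closures'' and ``preserves normal epimorphisms'' to combine into ``preserves cokernels''.
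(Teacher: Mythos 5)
Your proposal is correct and follows essentially the same route as the paper: reduce to cokernels of monomorphisms via the (regular epi, mono)-factorisation, identify the cokernel of a monomorphism with that of its normal closure, and then use that $F$ preserves kernels (left exactness) and normal epimorphisms---i.e.\ that $F$ is sequentially exact---to transport the short exact sequence $\overline{M}\rightarrowtail X\twoheadrightarrow X/\overline{M}$. Your write-up merely spells out the steps the paper's proof leaves implicit, so there is nothing to add.
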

\begin{proof}
It suffices to preserve cokernels of arbitrary monomorphisms, which are in fact the cokernels of their normal closures.
Those are preserved since the functor under consideration is sequentially exact, because it preserves finite limits and normal epimorphisms. 
\end{proof}

Recall from~\cite{MM-NC} that for a pair of subobjects in a normal unital category with binary coproducts, their Huq commutator is the normal closure of the Higgins commutator.
Thus we find:

\begin{corollary}\label{Corollary Huq}
Let $F$ be a coherent functor between normal unital categories with binary coproducts.
If $F$ preserves normal closures, then $F$ preserves Huq commutators of arbitrary cospans.
\end{corollary}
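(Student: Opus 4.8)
The plan is to obtain the corollary as a formal composition of two facts already at our disposal: that coherent functors preserve Higgins commutators (Proposition~\ref{prop:pres.hig}) and that $F$ preserves normal closures (the hypothesis). The glue is the characterisation, recalled immediately above the statement, of the Huq commutator as a normal closure.

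First I would check that the present hypotheses feed both ingredients. A normal category is in particular regular and pointed, so Proposition~\ref{prop:pres.hig} applies verbatim; and the identity from~\cite{MM-NC} expressing, for a cospan $(k\colon K\to X, l\colon L\to X)$, the Huq commutator $[K,L]_{X}$ as the normal closure of the Higgins commutator $[K,L]\leq X$ is stated precisely for normal unital categories with binary coproducts. Thus both tools are available in the setting of the corollary.

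The computation then reads off directly. Writing $\overline{(-)}$ for normal closure, we have $[K,L]_{X} = \overline{[K,L]}$ in the source. Applying $F$ and using that $F$ preserves normal closures gives $F([K,L]_{X}) \cong \overline{F([K,L])}$; Proposition~\ref{prop:pres.hig} identifies $F([K,L])$ with $[F(K),F(L)]$, so this is $\overline{[F(K),F(L)]}$; and the Huq--Higgins identity in the target category recognises the latter as $[F(K),F(L)]_{F(X)}$. Hence $F([K,L]_{X}) \cong [F(K),F(L)]_{F(X)}$, which is the assertion.

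There is no real obstacle here: all the substance sits inside Proposition~\ref{prop:pres.hig} and the Huq--Higgins characterisation, and the corollary is merely their juxtaposition. The single point worth a moment's attention is the bookkeeping of hypotheses just carried out, confirming that normal unital categories with binary coproducts simultaneously supply the regular pointed setting needed by Proposition~\ref{prop:pres.hig} and the exact context in which Huq commutators equal normal closures of Higgins commutators.
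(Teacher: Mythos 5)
Your proposal is correct and follows exactly the paper's own argument: the paper likewise cites the characterisation from~\cite{MM-NC} of the Huq commutator as the normal closure of the Higgins commutator (valid in normal unital categories with binary coproducts) and then invokes Proposition~\ref{prop:pres.hig} together with the hypothesis that $F$ preserves normal closures. Your write-up merely makes explicit the hypothesis bookkeeping and the three-step computation that the paper leaves implicit.
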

\begin{proof}
This follows from Proposition~\ref{prop:pres.hig}.
\end{proof}

\begin{theorem}\label{thm summary}
Let $\C$ be an algebraically coherent regular category with pushouts of split monomorphisms.
For any morphism $f\colon X \to Y$, consider the change-of-base functor ${f^*\colon \Pt_{Y}(\C)\to \Pt_{X}(\C)}$.
Then
\begin{enumerate}
	\item $f^*$ preserves Higgins commutators of arbitrary cospans.
\end{enumerate}
If, in addition, $\C$ is an ideal-determined Mal'tsev category, then
\begin{enumerate}
	\item[(b)] $f^*$ preserves normal closures;
	\item[(c)] $f^*$ preserves all cokernels;
	\item[(d)] $f^*$ preserves Huq commutators of arbitrary cospans.
\end{enumerate}
In particular, {\rm(a)--(d)} hold when $\C$ is semi-abelian and algebraically coherent.
\end{theorem}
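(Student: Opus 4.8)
The plan is to exploit the fact that, by Definition~\ref{Definition Algebraically Coherent Category}, algebraic coherence of $\C$ means precisely that the change-of-base functor $f^*\colon \Pt_Y(\C)\to \Pt_X(\C)$ is coherent. Each of the statements (a)--(d) is then a direct application of a general result proved earlier for coherent (or regular) functors---Proposition~\ref{prop:pres.hig} for~(a), Proposition~\ref{proposition: coherent preserves normal closure} for~(b), Corollary~\ref{Corollary cokernels} for~(c) and Corollary~\ref{Corollary Huq} for~(d)---to this single functor. The only genuine work is to check that both fibres $\Pt_Y(\C)$ and $\Pt_X(\C)$ inherit from $\C$ the structural hypotheses that each of those statements requires.

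For~(a), Proposition~\ref{prop:pres.hig} demands that source and target be pointed regular categories with binary coproducts. Each fibre is pointed, with zero object the identity point $(1_X,1_X)$; it is regular because $\C$ is regular (regular epimorphisms of points being computed on domains); and it has binary coproducts because, writing $\Pt_X(\C)=((X,1_X)\downarrow(\C\downarrow X))$ as in Corollary~\ref{Corollary Fibres}, the coproduct of two points is the pushout $A+_XC$ of their splittings $X\to A$ and $X\to C$, which exists since these are split monomorphisms and $\C$ has pushouts of split monomorphisms. This is exactly the object occurring in diagram~\eqref{Sum in points}. Thus Proposition~\ref{prop:pres.hig} applies and gives~(a).

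For~(b)--(d) I would add the hypothesis that $\C$ is an ideal-determined Mal'tsev category and first record the extra structure on the fibres: each fibre is unital (since $\C$ is Mal'tsev, exactly as in the proof of Theorem~\ref{theorem: Mal'tsev implies protomodular}), and each fibre is again ideal-determined, that is, it satisfies axiom~(A1) (normality) and~\Atwo. Granting this, (b) is Proposition~\ref{proposition: coherent preserves normal closure} applied to the coherent functor $f^*$ between the pointed regular fibres, which have finite coproducts and satisfy~\Atwo; (c) is Corollary~\ref{Corollary cokernels}, since $f^*$ is regular---being coherent between regular categories---hence preserves normal epimorphisms, which coincide with regular epimorphisms in the normal fibres, while it preserves normal closures by~(b); and (d) is Corollary~\ref{Corollary Huq}, the fibres being normal unital categories with binary coproducts and $f^*$ a coherent functor preserving normal closures.

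I expect the main obstacle to lie in the fibrewise inheritance of ideal-determinedness used for (b)--(d), and concretely in verifying~\Atwo\ inside $\Pt_X(\C)$. The subtlety is that kernels and cokernels in $\Pt_X(\C)$ are formed relative to its own zero object $(1_X,1_X)$ rather than the zero object of $\C$, so~\Atwo\ for the fibre is not literally the axiom assumed on $\C$; it is the Mal'tsev hypothesis that bridges this gap, which is exactly why (b)--(d), in contrast with~(a), require it. Finally, for the closing ``in particular'' clause I would note that every semi-abelian category is regular and finitely cocomplete---so it has pushouts of split monomorphisms---is protomodular and hence Mal'tsev, and satisfies (A1)--(A3) and hence is ideal-determined, while its fibres of points are again semi-abelian; all the structural hypotheses then hold automatically and (a)--(d) follow.
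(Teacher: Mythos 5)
Your proposal is correct and takes essentially the same route as the paper: its proof likewise applies Proposition~\ref{prop:pres.hig}, Proposition~\ref{proposition: coherent preserves normal closure}, Corollary~\ref{Corollary cokernels} and Corollary~\ref{Corollary Huq} to the coherent functor $f^*$, using that $\Pt_{X}(\C)$ is unital when $\C$ is Mal'tsev. Your explicit checks that the fibres inherit pointedness, regularity, binary coproducts (via pushouts of split monomorphisms) and the axioms (A1)--\Atwo{} merely spell out what the paper's terser proof leaves implicit.
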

\begin{proof}
Apply the previous results to the coherent functor $f^*$, again using that $\Pt_{X}(\C)$ is unital when $\C$ is Mal'tsev~\cite{B0}.
In particular, (a), (b), (c) and (d) follow from Proposition~\ref{prop:pres.hig}, Proposition~\ref{proposition: coherent preserves normal closure}, Corollary~\ref{Corollary cokernels} and Corollary~\ref{Corollary Huq}, respectively. 
\end{proof}

\subsection{The conditions \SH\ and \NH}
Let us recall (from~\cite{MFVdL}, for instance) that a pointed Mal'tsev category satisfies the \defn{Smith is Huq} condition~\SH\ when two equivalence relations on a given object always centralise each other (= commute in the Smith sense~\cite{Pedicchio, Smith}) as soon as their normalisations commute in the Huq sense~\cite{BG, Huq}. A semi-abelian category satisfies the condition~\NH\ of \defn{normality of Higgins commutators of normal subobjects}~\cite{AlanThesis, CGrayVdL1} when the Higgins commutator of two normal subobjects of a given object is again a normal subobject, so that it coincides with the Huq commutator. Condition~(d) in Theorem~\ref{thm summary} combined with Theorem~6.5 in~\cite{CGrayVdL1} now gives us the following result.

\begin{theorem}\label{(SH) + (NH)}
Any algebraically coherent semi-abelian category satisfies both the conditions~\SH\ and \NH.\noproof
\end{theorem}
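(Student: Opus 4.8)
The plan is to obtain both \SH\ and \NH\ from the single property, established in Theorem~\ref{thm summary}(d), that in an algebraically coherent semi-abelian category every change-of-base functor $f^*\colon \Pt_Y(\C)\to \Pt_X(\C)$ preserves Huq commutators of arbitrary cospans. The instance that does the work is the kernel functor $\Ker\colon \Pt_X(\C)\to \C$, which arises as change of base along the initial map $0\to X$ and hence preserves binary Huq commutators.

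For \NH, recall that the Huq commutator of two subobjects is the normal closure of their Higgins commutator, so that \NH\ amounts to the assertion that, for normal subobjects $K$, $L\leq X$, the Higgins commutator $[K,L]$ is already normal, i.e.\ coincides with $[K,L]_{\mathrm{Huq}}$. I would realise the pair $(K,L)$ as the image under $\Ker$ of a cospan of conjugation-stable subobjects inside a suitable point over $X$, compute both the Higgins and the Huq commutator of that cospan in the fibre $\Pt_X(\C)$, and exploit that normality of $K$ and $L$ forces these two fibred commutators to agree. Preservation of Huq commutators by $\Ker$ then transports this coincidence back to $\C$, yielding $[K,L]=[K,L]_{\mathrm{Huq}}$.

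For \SH, I would pass from equivalence relations to points: if $R$ and $S$ are equivalence relations on an object $Z$, their first projections are split epimorphisms, hence points over $Z$ whose kernels under $\Ker\colon \Pt_Z(\C)\to \C$ are the normalisations of $R$ and $S$. The Smith centralisation of $R$ and $S$ can be reformulated as the triviality of a Huq commutator computed inside a fibre of the fibration of points; preservation of that commutator under the appropriate change-of-base functor identifies it with the Huq commutator of the normalisations in $\C$. Consequently Smith-triviality and Huq-triviality of the normalisations coincide, which is precisely \SH.

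The genuinely substantive input is the dictionary translating Smith commutators and normality of Higgins commutators into Huq commutators computed inside fibres of $\cod\colon \Pt(\C)\to \C$; this is exactly the content of Theorem~6.5 in~\cite{CGrayVdL1}. I would invoke that result, so that the whole argument reduces to checking its hypothesis---namely that change-of-base functors preserve Huq commutators---which is supplied verbatim by Theorem~\ref{thm summary}(d). The main obstacle is therefore not in the present argument but in that bridging theorem; everything here is formal once (d) is in hand.
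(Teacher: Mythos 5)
Your proposal is correct and is essentially the paper's own argument: the theorem is stated there without proof precisely because it follows by combining condition~(d) of Theorem~\ref{thm summary} with Theorem~6.5 of~\cite{CGrayVdL1}, which is exactly the reduction you arrive at. The informal sketches you give for \SH\ and \NH\ are just descriptions of what that cited bridging theorem accomplishes, so nothing beyond its invocation is needed.
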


\subsection{Peri-abelian categories and the condition \UCE}\label{Peri-abelian}
Recall that a semi-abelian category $\C$ is \defn{peri-abelian} when for all $f\colon X \to Y$, the change-of-base functor ${f^*\colon \Pt_{Y}(\C)\to \Pt_{X}(\C)}$ commutes with abelianisation. Originally established by Bourn in~\cite{Bourn-Peri} as a convenient condition for the study of certain aspects of cohomology, it was further analysed in~\cite{GrayVdL1} where it is shown to imply the \defn{universal central extension condition} \UCE\ introduced in~\cite{CVdL}. As explained in that paper, the condition~\UCE\ is what is needed for the characteristic properties of universal central extensions of groups to extend to the context of semi-abelian categories.

Via Theorem~\ref{(SH) + (NH)} above, Proposition~2.5 in~\cite{CGrayVdL1} implies that all algebraically coherent semi-abelian categories are peri-abelian and thus by~\cite[Theorem~3.12]{CGrayVdL1} satisfy the universal central extension condition \UCE.

\subsection{A strong version of \SH}
In the article~\cite{MFVdL3}, the authors consider a strong version of the \emph{Smith is Huq} condition, asking that the kernel functors
\[
\Ker\colon \Pt_{X}(\C)\to \C
\]
reflect Huq commutativity of arbitrary cospans (rather than just pairs of normal subobjects).
We write this condition \SSH.
Of course \SSH\ $\To$ \SH.
On the other hand, as shown in~\cite{MFVdL3}, \SSH\ is implied by \LACC.
This can be seen as a consequence of Theorem~\ref{(LACC) implies (AC)} in combination with the following result. 

\begin{theorem}\label{(SSH)}
If $\C$ is an algebraically coherent semi-abelian category, then the kernel functors $\Ker\colon \Pt_{X}(\C)\to \C$ reflect Huq commutators.
Hence the category $\C$ satisfies~\SSH.
\end{theorem}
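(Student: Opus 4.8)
The plan is to realise the kernel functor $\Ker\colon \Pt_X(\C)\to \C$ as the change-of-base functor $(!_X)^*$ along the unique morphism $!_X\colon 0\to X$, so that the preservation results of Theorem~\ref{thm summary} become immediately available. Since $\C$ is semi-abelian and algebraically coherent, the last clause of Theorem~\ref{thm summary} lets us apply part~(d) to $(!_X)^*$, which tells us that $\Ker$ preserves Huq commutators of arbitrary cospans: for any cospan $(u,v)$ in $\Pt_X(\C)$ over a common codomain $W$ we have $\Ker[u,v]=[\Ker u,\Ker v]$. This preservation identity is the engine of the whole argument.

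Next I would record the two structural properties of $\Ker$ that upgrade preservation to reflection. First, $\Ker$ is conservative: $\C$ is protomodular and $\Ker$ is one of the change-of-base functors of the fibration of points. Second, $\Ker$ is injective and order-reflecting on the sub-points of a fixed point. Indeed, for a point $(A,p,s)$ the cospan formed by the splitting $s$ and the kernel inclusion is jointly strongly epimorphic by protomodularity, so applying this to a sub-point $U\leq (A,p,s)$ shows that $U=s(X)\vee \Ker U$ as subobjects of $A$; hence two sub-points with the same kernel coincide.

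With these in hand, reflection of Huq commutators is formal. Given a cospan $(u,v)$, the Huq commutator $[u,v]$ is a normal sub-point whose kernel is $[\Ker u,\Ker v]$ by the preservation identity, and by injectivity on subobjects it is the \emph{unique} normal sub-point with that kernel; thus $[u,v]$ is completely determined by $[\Ker u,\Ker v]$, which is precisely the assertion that $\Ker$ reflects Huq commutators. The condition~\SSH\ then follows as the special case of commutativity: if $[\Ker u,\Ker v]=0$, then $\Ker[u,v]=0$, and since a sub-point with trivial kernel is the zero object of $\Pt_X(\C)$ by conservativity, we conclude $[u,v]=0$, so $u$ and $v$ Huq-commute.

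I expect the only genuine subtlety to lie in the injectivity-on-subobjects step, namely the reconstruction $U=s(X)\vee \Ker U$ of a sub-point from its kernel; this rests on the protomodular fact that each point is jointly generated by its splitting and its kernel, while the remaining ingredients (conservativity and the preservation identity of Theorem~\ref{thm summary}) are direct citations. I would also note that for the weaker conclusion~\SSH\ alone, conservativity already suffices and the injectivity step is not required.
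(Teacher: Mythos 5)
Your proof is correct, and its first half is exactly the paper's: both apply part~(d) of Theorem~\ref{thm summary} to the kernel functor $\Ker=(!_X)^*$ to obtain the preservation identity $\Ker[u,v]=[\Ker u,\Ker v]$. Where you diverge is in the second ingredient. The paper's entire proof consists of two citations: it passes from preservation to reflection by invoking Lemma~6.4 of~\cite{CGrayVdL1}, and observes that this is precisely \SSH\ as defined in~\cite{MFVdL3}. You instead prove that passage from scratch, and your argument is sound: conservativity of $\Ker$ follows from protomodularity, and the reconstruction $U=s(X)\join\Ker U$ of a sub-point from its kernel is again protomodularity, in the form of Proposition~11 of~\cite{Bourn1991} (the kernel and the splitting of any point are jointly strongly epimorphic), which the paper itself invokes elsewhere (Theorem~\ref{theorem: Mal'tsev implies protomodular}, Lemma~\ref{lemma: split monos}). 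Hence $\Ker$ is injective on sub-points, so $[u,v]$ is the unique normal sub-point whose kernel is $[\Ker u,\Ker v]$---the natural reading of ``reflects Huq commutators''---and the vanishing case yields \SSH. Your closing observation that conservativity alone already gives the weaker conclusion \SSH\ (a sub-point with zero kernel is the zero point) is also accurate, and is essentially what the cited external lemma encapsulates. What your route buys is self-containedness: it reconstructs inside the paper the reflection principle that the paper outsources to~\cite{CGrayVdL1}; what the paper's route buys is brevity, by reusing a lemma proved once and for all in that earlier work.
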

\begin{proof}
We may combine (d) in Theorem~\ref{thm summary} with Lemma~6.4 in~\cite{CGrayVdL1}.
We find precisely the definition of \SSH\ as given in~\cite{MFVdL3}.
\end{proof}

\subsection{Strong protomodularity.}\label{SP}
A pointed protomodular category $\C$ is said to be \defn{strongly protomodular}~\cite{Bourn2004,Borceux-Bourn,Rodelo:Moore} when for all $f\colon X \to Y$, the change-of-base functor ${f^*\colon \Pt_{Y}(\C)\to \Pt_{X}(\C)}$ reflects Bourn-normal monomorphisms. This is equivalent to asking that, for every morphism of split extensions
\[
\xymatrix{
N \ar@{{ |>}->}[r] \ar@{{ >}->}[d]_-{n} & D \ar@<0.5ex>[r] \ar[d]  & X \ar@{=}[d] \ar@<0.5ex>[l] \\
K \ar@{{ |>}->}[r]_-{k} & B \ar@<0.5ex>[r] & X, \ar@<0.5ex>[l]
}
\]
if $n$ is a normal monomorphism then so is the composite $kn$. Theorem~7.3
in~\cite{Bourn2004} says that any finitely cocomplete strongly protomodular homological category satisfies the condition~\SH.

\begin{lemma}\label{Lemma Split extensions}
Let $\C$ and $\D$ be pointed categories with finite limits such that normal closures of monomorphisms exist in $\C$, and let $F\colon \C\to \D$ be a conservative functor.
If $F$ preserves normal closures, then $F$ reflects normal monomorphisms.
\end{lemma}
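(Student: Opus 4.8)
The plan is to reduce normality of $m$ to an isomorphism condition on its comparison to the normal closure, and then transport that condition backwards along $F$ using conservativity. The guiding principle is that a monomorphism is normal exactly when it coincides with its own normal closure.

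First I would take a monomorphism $m\colon M\to X$ in $\C$ for which $F(m)$ is a normal monomorphism in $\D$, and form its normal closure $\overline{m}\colon M_X\to X$ together with the comparison morphism $\theta\colon M\to M_X$ satisfying $m=\overline{m}\theta$, exactly as in Lemma~\ref{lemma: normal closure}. The key observation to record is that a monomorphism is normal if and only if this comparison is an isomorphism. Indeed, if $m$ is normal then it is the smallest normal monomorphism through which it factors, so it coincides (as a subobject) with $\overline{m}$ and $\theta$ is invertible; conversely, if $\theta$ is an isomorphism then $m$ is isomorphic to the normal monomorphism $\overline{m}$, hence normal.

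Next I would bring in the hypothesis that $F$ preserves normal closures. This says that $F(\overline{m})\colon F(M_X)\to F(X)$ is the normal closure of $F(m)$, with $F(\theta)$ as its comparison morphism. Since $F(m)$ is assumed to be a normal monomorphism, the observation above applied in $\D$ tells us that the comparison of $F(m)$ to its own normal closure is an isomorphism. By uniqueness of normal closures up to a canonical isomorphism commuting with the comparison morphisms, this comparison may be identified with $F(\theta)$, so that $F(\theta)$ is an isomorphism.

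Finally, conservativity of $F$ forces $\theta$ itself to be an isomorphism, whence $m$ is normal by the equivalence established at the start. The only point requiring care is the matching of the two presentations of the normal closure of $F(m)$ — the one coming from preservation and the tautological one arising from $F(m)$ being normal — so that the relevant comparison really is $F(\theta)$; once this identification is made, conservativity completes the argument with no further computation.
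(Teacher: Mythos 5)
Your proof is correct and follows essentially the same route as the paper's: factor the monomorphism through its normal closure, use preservation of normal closures together with normality of $F(m)$ to see that the image of the comparison morphism is an isomorphism, and conclude by conservativity. (One small notational point: in Lemma~\ref{lemma: normal closure} the symbol $\theta$ denotes the regular epimorphism $X\flat M\to M_X$, so the comparison $M\to M_X$ you call $\theta$ is really $\theta\eta_M$ in that lemma's notation; this does not affect the argument.)
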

\begin{proof}
Let $m\colon M\to X$ be a morphism such that $F(m)$ is normal.
Using that~$F$ preserves limits and reflects isomorphisms, is is easily seen that $m$ is a monomorphism.
Now let $n\colon N\to X$ be the
normal closure of $m$ and $i\colon M\to N$ the unique factorisation $m = ni$.
The monomorphism $F(m)$ being normal, we see that $F(i)$ is an isomorphism: $F(i)$ is the unique factorisation of $F(m)$ through its normal closure~$F(n)$.
Since $F$ reflects isomorphisms, $i$ is an isomorphism, and $m$ is normal.
\end{proof}

\begin{theorem}\label{(SSH) => (SP)}
Any algebraically coherent semi-abelian category is strongly protomodular.
\end{theorem}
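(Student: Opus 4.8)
The plan is to assemble three facts already in hand and chain them formally. First, since $\C$ is semi-abelian it is in particular protomodular, so by definition every change-of-base functor $f^*\colon \Pt_{Y}(\C)\to \Pt_{X}(\C)$ of the fibration of points is \emph{conservative}. Second, because $\C$ is semi-abelian and algebraically coherent, Theorem~\ref{thm summary}~(b) tells us that $f^*$ \emph{preserves normal closures}. Third, I would feed these two properties into Lemma~\ref{Lemma Split extensions}, taking $F=f^*$.

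Concretely, I would verify that the hypotheses of Lemma~\ref{Lemma Split extensions} are met for $F=f^*$ between the fibres $\Pt_{Y}(\C)$ and $\Pt_{X}(\C)$. Both fibres are pointed (with zero object the identity point $(1,1)$) and finitely complete, and since the semi-abelian property is inherited by categories of points, $\Pt_{Y}(\C)$ is itself semi-abelian, so normal closures of monomorphisms exist there. As $f^*$ is conservative and preserves normal closures, Lemma~\ref{Lemma Split extensions} then yields that $f^*$ \emph{reflects normal monomorphisms}.

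To finish, I would identify this conclusion with the definition of strong protomodularity recalled in Subsection~\ref{SP}. A semi-abelian category is exact homological, so Bourn-normal monomorphisms coincide with kernels (ordinary normal monomorphisms); reflecting normal monomorphisms is therefore the same as reflecting Bourn-normal monomorphisms, which is precisely the requirement that $f^*$ be strongly protomodular. Since $f$ was arbitrary, $\C$ is strongly protomodular.

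The argument is a purely formal concatenation of earlier results, so I do not expect a genuine obstacle. The only point demanding care is the bookkeeping behind applying Lemma~\ref{Lemma Split extensions} to functors \emph{between fibres} rather than on $\C$ directly: one must be sure that $\Pt_{Y}(\C)$ really does have the pointedness and the normal closures required by the lemma's statement, and that the notion of ``normal'' used by Theorem~\ref{thm summary}~(b), by Lemma~\ref{Lemma Split extensions}, and in the definition of strong protomodularity all agree in this semi-abelian setting. All three coincide, but it is worth spelling this out so that the chain of implications is unambiguous.
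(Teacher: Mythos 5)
Your proposal is correct and follows essentially the same route as the paper: the paper's own proof is exactly the concatenation of Theorem~\ref{thm summary}~(b) (preservation of normal closures by $f^*$), the coincidence of Bourn-normal monomorphisms with kernels in the semi-abelian setting, and Lemma~\ref{Lemma Split extensions} applied to the conservative functor $f^*$. The bookkeeping you spell out (pointedness and existence of normal closures in the fibres, which are themselves semi-abelian) is precisely what the paper leaves implicit.
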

\begin{proof}
Via Theorem~\ref{thm summary} and the fact that in a semi-abelian category, Bourn-normal monomorphisms and kernels coincide, this follows from Lemma~\ref{Lemma Split extensions}.
\end{proof}

Notice that, in the case of varieties, the last result can also be seen as a consequence of Proposition 9 in~\cite{Borceux-SP}.

\subsection{Fibrewise algebraic cartesian closedness.}
A finitely complete category $\C$ is called \defn{fibrewise algebraically cartesian closed} (shortly \FWACC) in~\cite{Bourn-Gray} if, for each $X$ in \C, $\Pt_X(\C)$ is algebraically cartesian closed. In the same paper, the authors showed that a pointed Mal'tsev category is \FWACC\ if and only if each fibre of the fibration of points has centralisers.

\begin{lemma}\label{Lemma Coproducts}
Let $F\colon \C \to \D$ and $G\colon \D\to \C$ be functors between categories with finite limits and binary coproducts such that $GF=1_\C$ and $G$ reflects isomorphisms.
If $F$ and $G$ are coherent, then $F$ preserves binary coproducts. 
\end{lemma}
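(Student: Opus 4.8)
The plan is to prove that for all $X$, $Y$ in $\C$ the canonical comparison morphism $\varphi = \mus{F(\iota_X)}{F(\iota_Y)}\colon F(X)+F(Y)\to F(X+Y)$ is an isomorphism, since this is exactly what it means for $F$ to preserve the binary coproduct of $X$ and $Y$. As $G$ reflects isomorphisms, it suffices to show that $G(\varphi)$ is an isomorphism, and the way I would do this is to compare $G(\varphi)$ with the coproduct comparison morphism for $G$ itself.

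First, since $G$ is coherent, Proposition~\ref{Proposition binary sums} tells us that for any $A$, $B$ in $\D$ the comparison morphism $\mus{G(\iota_A)}{G(\iota_B)}\colon G(A)+G(B)\to G(A+B)$ is a strong epimorphism. Taking $A=F(X)$ and $B=F(Y)$ and using $GF=1_\C$ to identify $G(F(X))=X$ and $G(F(Y))=Y$, this yields a strong epimorphism $\psi = \mus{G(\iota_{F(X)})}{G(\iota_{F(Y)})}\colon X+Y\to G(F(X)+F(Y))$. The key step is then to check that $G(\varphi)\circ\psi\colon X+Y\to X+Y$ is the identity; I would verify this by precomposing with the two coproduct injections of $X+Y$. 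Precomposing with $\iota_X$ gives $G(\varphi)\circ G(\iota_{F(X)}) = G(\varphi\circ\iota_{F(X)}) = G(F(\iota_X)) = \iota_X$, the last equality again using $GF=1_\C$, and the computation for $\iota_Y$ is symmetric; hence $G(\varphi)\circ\psi = 1_{X+Y}$.

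From this identity $\psi$ is a split monomorphism, in particular a monomorphism; being simultaneously a strong epimorphism, it must be an isomorphism (any morphism that is both a strong epimorphism and a monomorphism is an isomorphism). It then follows that $G(\varphi) = \psi^{-1}$ is an isomorphism, and since $G$ reflects isomorphisms, $\varphi$ is an isomorphism, which is what we wanted. I do not expect any serious obstacle here: the proof is essentially a diagram chase on coproduct injections, and the only points needing attention are the careful bookkeeping of the identifications supplied by $GF=1_\C$ and the fact that the coherence of $G$ enters solely to make $\psi$ a strong epimorphism. It is worth noting that the coherence of $F$ does not seem to be needed for this conclusion, although it independently guarantees that $\varphi$ is a strong epimorphism.
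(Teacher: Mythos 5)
Your proof is correct and is essentially the paper's own argument: both compare $G(\varphi)$ with the coproduct comparison morphism $\psi$ for $G$, use coherence of $G$ via Proposition~\ref{Proposition binary sums}~(ii) to make $\psi$ a strong epimorphism, deduce from $G(\varphi)\circ\psi=1$ that $\psi$ is a (split) monomorphism and hence an isomorphism, and conclude that $\varphi$ is an isomorphism because $G$ reflects isomorphisms. Your closing remark is also accurate: the paper's proof opens by invoking the coherence of $F$ to make $\varphi$ a strong epimorphism, but that fact is never used in the rest of its argument, so this hypothesis is indeed not needed for the conclusion.
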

\begin{proof}
Since $F$ is coherent, by Proposition~\ref{Proposition binary sums} (ii) the induced morphism
\[
f\DefEq\mus{F(\iota_A)}{F(\iota_B)}\colon F(A)+F(B) \to F(A+B)
\]
is a strong epimorphism.
It follows by the universal property of the coproduct that the diagram
\[
\xymatrix@C=7em{
& A+B \ar@{=}[ld] \ar@{=}[rd] \\
GF(A)+GF(B) \ar[r]_-{g \DefEq\mus{G(\iota_{F(A)})}{G(\iota_{F(B)})}} & G(F(A)+F(B)) \ar[r]_-{G(f)} & GF(A+B)
}
\]
commutes, and so since $G$ is coherent, by Proposition~\ref{Proposition binary sums} (ii), that the morphism~$g$ is an isomorphism.
This means that $G(f)$ is an isomorphism and hence---since~$G$ reflects isomorphisms---that $f$ is an isomorphism as required.
\end{proof}

\begin{theorem}\label{(FWACC)}
Let $\C$ be a regular Mal'tsev category with pushouts of split monomorphisms.
\begin{enumerate}
\item If $\C$ is algebraically coherent, then the change-of-base functor along any split epimorphism preserves finite colimits.
\item When $\C$ is, in addition, a cocomplete well-powered category in which filtered colimits commute with finite limits---for instance, $\C$ could be a variety---then if $\C$ is algebraically coherent, it is fibre-wise algebraically cartesian closed \FWACC.
\end{enumerate}
\end{theorem}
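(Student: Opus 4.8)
The plan is to establish (a) first and then derive (b) by an adjoint functor theorem argument. Throughout, recall that each fibre $\Pt_{X}(\C)$ is again algebraically coherent (Corollary~\ref{Corollary Fibres}), pointed and Mal'tsev, hence protomodular by Theorem~\ref{theorem: Mal'tsev implies protomodular}; moreover the hypothesis of pushouts of split monomorphisms endows the fibres with binary coproducts (computed as the pushouts in diagram~\eqref{Sum in points}). Thus every fibre is a pointed, regular, algebraically coherent Mal'tsev category with binary coproducts, i.e.\ a homological category with binary coproducts.

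For (a), fix a split epimorphism $p\colon A\to X$ with section $s$ and consider the change-of-base functor $p^{*}\colon \Pt_{X}(\C)\to\Pt_{A}(\C)$. I would show it preserves the initial object, binary coproducts, and coequalisers. It preserves finite limits since it is coherent, hence it preserves the terminal object; in the pointed fibres the terminal and initial objects agree, so initial objects are preserved. For binary coproducts I would apply Lemma~\ref{Lemma Coproducts} with $F=p^{*}$ and $G=s^{*}$: here $s^{*}p^{*}\cong(ps)^{*}=\mathrm{id}$, both functors are coherent, and $s^{*}$ reflects isomorphisms because $s$ is a split monomorphism and $\Pt_{X}(\C)$ is protomodular, so Lemma~\ref{lemma: split monos} applies.

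The hard part will be coequalisers. The natural route is to view $p^{*}$ as the change-of-base functor of $\D\DefEq\Pt_{X}(\C)$ along the terminal map of the object $(A,p,s)$, and to reduce a coequaliser in the homological category $\D$ to a cokernel: the coequaliser of a pair $(u,v)$ should be the cokernel of the normalisation of the equivalence relation generated by the image of $(u,v)$, an object assembled from regular images, binary joins and normal closures. Each of these is preserved by $p^{*}$---images and joins by coherence through Proposition~\ref{Proposition joins}, normal closures and cokernels by parts (b) and (c) of Theorem~\ref{thm summary} (equivalently Proposition~\ref{proposition: coherent preserves normal closure} and Corollary~\ref{Corollary cokernels}). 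The main obstacle is precisely to license these last two preservation results: they are stated for ideal-determined Mal'tsev categories, so one must verify that the homological fibres meet the ideal-determinacy requirement before assembling the coequaliser as indicated. Once coequalisers are handled, $p^{*}$ preserves all finite colimits.

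For (b), the additional hypotheses are designed to turn finite-colimit preservation into a right adjoint. Since $p^{*}$ is built from pullbacks and, by assumption, filtered colimits commute with finite limits, $p^{*}$ preserves filtered colimits; together with (a) this gives preservation of all small colimits, as every colimit is a filtered colimit of finite ones. Because the fibres are cocomplete and well-powered---so that the adjoint functor theorem applies, as it does in any variety---the cocontinuous functor $p^{*}$ admits a right adjoint. Finally, as $X$ runs over the objects of $\C$ and $(A,p,s)$ over the objects of $\Pt_{X}(\C)$ (equivalently, $p\colon A\to X$ over the split epimorphisms), these $p^{*}$ are exactly the change-of-base functors along the terminal maps of the fibres $\Pt_{X}(\C)$; hence each $\Pt_{X}(\C)$ is algebraically cartesian closed \ACC, which is by definition fibre-wise algebraic cartesian closedness \FWACC\ of $\C$.
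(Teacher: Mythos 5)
Your treatment of the initial object and of binary coproducts is exactly the paper's own argument: protomodularity of the fibres via Theorem~\ref{theorem: Mal'tsev implies protomodular}, then Lemma~\ref{lemma: split monos} and Lemma~\ref{Lemma Coproducts} applied to $F=p^{*}$, $G=s^{*}$. Everything after that is where your proposal breaks down. The paper never constructs coequalisers by hand: it cites Theorem~4.3 of~\cite{Gray2012}, which says that in this setting preservation of binary coproducts by $p^{*}$ already implies preservation of all finite colimits, and which, under the extra hypotheses of (b), also yields the right adjoint. Your substitute argument reduces coequalisers to cokernels of normalisations and then appeals to parts (b) and (c) of Theorem~\ref{thm summary} (equivalently Proposition~\ref{proposition: coherent preserves normal closure} and Corollary~\ref{Corollary cokernels}); those results require \Atwo\ in the fibres, i.e.\ that the fibres be ideal-determined, and this is simply not available here. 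Under the present hypotheses the fibres are only homological with binary coproducts, and \Atwo\ can genuinely fail: take for $\C$ the category $\Haus^{\TT}$ of Hausdorff topological groups, which is homological, cocomplete and algebraically coherent by Theorem~\ref{Topological algebras}, hence satisfies all the hypotheses of part (a). Its fibre over the terminal object is $\C$ itself, and there the regular image of the kernel $\alpha\ZZ\normal\mathbb{R}$ (with $\alpha$ irrational) along the regular epimorphism $\mathbb{R}\to\mathbb{R}/\ZZ$ is a dense, non-closed subgroup, hence not a kernel. So the obstacle you flag as ``the main obstacle'' is not a verification left for later; it is a genuine failure of your strategy, and the coequaliser step---hence all of (a) beyond coproducts---remains unproven. (A further unaddressed point in the same step: in a merely regular, non-exact fibre the generated equivalence relation need not be effective, so even identifying an existing coequaliser with the cokernel of a normalisation requires an argument.)

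Part (b) has a separate gap. A cocontinuous functor out of a cocomplete, well-powered category need not have a right adjoint: the dual Special Adjoint Functor Theorem asks for co-well-poweredness and a generating set, not well-poweredness, and no standard adjoint functor theorem has the hypotheses you invoke. The mismatch is not accidental: the hypotheses actually stated in (b)---well-poweredness and filtered colimits commuting with finite limits---are those of Gray's Theorem~4.3 in~\cite{Gray2012}, which constructs the right adjoint directly from these assumptions, and the paper obtains (b) by citing that theorem once more. Your parenthetical ``as it does in any variety'' rescues the varietal case (a variety is also co-well-powered with a generator, so the dual SAFT does apply there), but not the general statement. Your identification of \FWACC\ with the existence of a right adjoint to $p^{*}$ for every split epimorphism $p$, and your observation that $p^{*}$ preserves filtered colimits, are both correct ingredients; but to finish along your lines you would have to reprove Gray's theorem rather than appeal to a standard adjoint functor theorem.
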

\begin{proof}
Let us start with the first statement. For each $X$ in \C, $\Pt_X(\C)$ is a Mal'tsev category by Example 2.2.15 in~\cite{Borceux-Bourn}, and it is algebraically coherent by Corollary~\ref{Corollary Fibres}. Then, by Theorem~\ref{theorem: Mal'tsev implies protomodular}, $\Pt_X(\C)$ is protomodular for each $X$.

Suppose now that $p$ is a split epimorphism in \C, with a section $s$. Then by Lemma~\ref{lemma: split monos} the functor $s^*$ reflects isomorphisms. Hence we can apply Lemma~\ref{Lemma Coproducts}, with $F=p^*$ and $G=s^*$, to prove that $p^*$ preserves binary coproducts. Finally, by Theorem~4.3 in~\cite{Gray2012}, $p^*$ preserves finite colimits.

Statement (b) follows from (a) again via Theorem~4.3 in~\cite{Gray2012}.
\end{proof}

\subsection{Action accessible categories.}
It is unclear to us how the notion of \emph{action accessibility} introduced in~\cite{BJ07} is related to algebraic coherence. The two conditions share many examples and counterexamples, but we could not find any examples that separate them. On the other hand, we also failed to prove that one implies the other, so for now the relationship between the two conditions remains an open problem.

\section{Decomposition of the ternary commutator}\label{Ternary commutator}

It is known~\cite{HVdL} that for normal subgroups $K$, $L$ and $M$ of a group $X$, 
\[
[K,L,M]=[[K,L],M]\join [[L,M],K]]\join [[M,K],L]
\]
where the commutator on the left is defined as in Section~\ref{Subsection Diamond}. Since, by the so-called \emph{Three Subgroups Lemma}, any of the latter commutators is contained in the join of the other two, we see that 
\[
[K,L,M]=[[K,L],M]\join[[M,K],L].
\]
We shall prove that this result is valid in any algebraically coherent semi-abelian category.
This gives us a categorical version of the Three Subgroups Lemma, valid for \emph{normal} subobjects of a given object. Recall, however, that the usual Three Subgroups Lemma for groups works for arbitrary subobjects.

\begin{theorem}[Three Subobjects Lemma for normal subobjects]\label{3SO}
If $K$, $L$ and~$M$ are normal subobjects of an object $X$ in an algebraically coherent semi-abelian category, then 
\[
[K,L,M]=[[K,L],M]\join[[M,K],L].
\]
In particular, $[[L,M],K]\leq [[K,L],M]\join [[M,K],L]$.
\end{theorem}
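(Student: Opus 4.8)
The plan is to reduce the general statement to the known group-theoretic identity by exploiting the machinery already built in the paper, rather than reproving a Three Subgroups Lemma from scratch. The key observation is that the ternary Higgins commutator $[K,L,M]$ is defined (Section~\ref{Subsection Diamond}) as the image of $K \cosmash L \cosmash M$ inside $X$, and the co-smash product is the obstruction to distributivity of binary Higgins commutators over joins. Since $\C$ is algebraically coherent, Lemma~\ref{Lemma Bemol} and Theorem~\ref{Theorem Bemol} give us strong control over how the functors $X \flat (-)$ and hence the commutator operations interact with joins. First I would establish the containment $[[K,L],M] \join [[M,K],L] \leq [K,L,M]$, which should follow directly from the definitions of the binary and ternary commutators together with the fact that each iterated binary commutator arises as the image of a suitable composite factoring through $K \cosmash L \cosmash M$.

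The harder inclusion is $[K,L,M] \leq [[K,L],M] \join [[M,K],L]$, and here the strategy is to use algebraic coherence to decompose the ternary co-smash product. The plan is to apply Proposition~\ref{prop:pres.hig} and the behaviour of coherent functors on commutators, together with the short exact sequence
\[
\xymatrix@=3em{ 0 \ar[r] & X\cosmash Y\cosmash Z \ar@{{ |>}->}[r] &
 (X+Y)\cosmash Z \ar@{-{ >>}}[r] & (X\cosmash Z)\times(Y\cosmash Z) \ar[r] & 0}
\]
recalled in Section~\ref{Subsection Diamond}. Writing $[K,L,M]$ as the image of $(K+L)\cosmash M$ modulo the images of $K \cosmash M$ and $L \cosmash M$, one expresses it in terms of the binary commutator $[K \join L, M]$ relative to $[K,M]$ and $[L,M]$. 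Normality of $K$, $L$, $M$ is essential here, since it is what allows the Higgins commutators to be manipulated as normal subobjects whose joins behave well; this is precisely the regime in which conditions \SH\ and \NH\ (Theorem~\ref{(SH) + (NH)}) apply.

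The main obstacle I anticipate is controlling the interaction between the co-smash decomposition and the \emph{join} operation at the level of subobjects of $X$, as opposed to at the level of the free objects $K + L + M$. Algebraic coherence ensures that the relevant comparison maps $F(A)+F(B) \to F(A+B)$ are strongly epimorphic (Proposition~\ref{Proposition binary sums}), and that Higgins commutators are preserved by coherent functors (Proposition~\ref{prop:pres.hig}); the delicate point is to assemble these into an honest equality of subobjects rather than a mere containment. I expect the argument to hinge on pushing the ternary commutator through the kernel functor or a suitable change-of-base functor $f^*$, using Theorem~\ref{thm summary}(a) that such functors preserve Higgins commutators, and then reading off the decomposition in the fibre where the subobjects have become split.

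Once both inclusions are in place, the final ``In particular'' clause is immediate: the symmetry of the ternary commutator $[K,L,M]$ under permutation of its arguments lets us also write it as $[[L,M],K] \join [[K,L],M]$ (and cyclically), so comparing the two expressions for the same object forces $[[L,M],K] \leq [[K,L],M] \join [[M,K],L]$, exactly as in the classical Three Subgroups Lemma.
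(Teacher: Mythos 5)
Your first inclusion and the closing ``In particular'' step are fine: $[[K,L],M]\join[[M,K],L]\leq [K,L,M]$ is indeed Proposition~2.21 of~\cite{HVdL}, and once the equality is proved, symmetry of $K\cosmash L\cosmash M$ in its three arguments gives $[[L,M],K]\leq [L,M,K]=[K,L,M]$, exactly as you say. The gap is in the hard inclusion $[K,L,M]\leq [[K,L],M]\join[[M,K],L]$, and it is genuine. The manipulation you propose with the short exact sequence $0\to K\cosmash L\cosmash M\to (K+L)\cosmash M\to (K\cosmash M)\times(L\cosmash M)\to 0$, after taking images in $X$, yields only the known join decomposition $[K\join L,M]=[K,M]\join[L,M]\join[K,L,M]$ (essentially Proposition~2.22 of~\cite{HVdL}). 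Joins of subobjects admit no complements, so there is no ``modulo'' operation recovering $[K,L,M]$ from $[K\join L,M]$, $[K,M]$ and $[L,M]$: that formula bounds $[K\join L,M]$ using $[K,L,M]$, not the other way round, and gives no upper bound whatsoever on the ternary commutator in terms of iterated binary ones. Likewise, Theorem~\ref{thm summary}(a) concerns preservation of \emph{binary} Higgins commutators of cospans by change-of-base functors; pushing $[K,L,M]$ into a fibre preserves it but does not decompose it, so ``reading off the decomposition in the fibre'' is not an argument.

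What is actually needed---and what the paper does---is a covering of the object $K\cosmash L\cosmash M$ itself by a sum of iterated co-smash products. Algebraic coherence, in the form of Theorem~\ref{Theorem Bemol}, makes $(K\flat L)+(K\flat M)\to K\flat(L+M)$ a strong epimorphism; a $3\times 3$ diagram then exhibits $K\cosmash L\cosmash M$ as a quotient of a subobject of $(K\flat L)\cosmash(K\flat M)$; and the identifications $K\flat L=(K\cosmash L)\join L$ and $K\flat M=(K\cosmash M)\join M$, together with Proposition~2.22 of~\cite{HVdL}, produce a strong epimorphism onto $K\cosmash L\cosmash M$ from $S+(L\cosmash M)\cosmash S$, where $S$ is an explicit sum of eight iterated co-smash products. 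Taking images in $X$ gives $[K,L,M]=\overline S\join[[L,M],\overline S]$, and the remaining work---absorbing the higher-order terms of $\overline S$, such as $[L,M,[K,M]]$ and $[[K,L],[K,M]]$, into $[L,[K,M]]\join[[K,L],M]$---requires \SH, \NH\ and the normality of $K$, $L$ and~$M$. This is precisely where those hypotheses enter the proof; your sketch never uses them in the hard direction, which is a further sign that the core of the argument is missing.
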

\begin{proof}
First note that in the diagram
\[
\resizebox{\textwidth}{!}{
\xymatrix@C=4em{0 \ar[r] & (K\flat L)\cosmash (K\flat M) \pullback \ar@{.{ >>}}[d]_-{\alpha} \ar@{{ |>}->}[r]^{\iota_{K\flat L,K\flat M}} & (K\flat L)+ (K\flat M) \ar@{.{ >>}}[d]^-{\mus{K\flat \iota_{L}}{K\flat \iota_{M}}} \ar[r]^-{\sigma_{K\flat L,K\flat M}} & (K\flat L)\times (K\flat M) \ar@{=}[d] \ar[r] & 0\\
0 \ar[r] & A \ar@{{ |>}->}[r] & K\flat (L+ M) \ar[r]_-{\left\links\begin{smallmatrix} K\flat\mus{1_{L}}{0}\\ K\flat\mus{0}{1_{M}}\end{smallmatrix}\right\rechts} & (K\flat L)\times (K\flat M) \ar[r] & 0}}
\]
the middle arrow, and hence also the induced left hand side arrow $\alpha$, are strong epimorphisms by algebraic coherence in the form of Theorem~\ref{Theorem Bemol}.
Hence also in the diagram
\[
\resizebox{\textwidth}{!}{
\xymatrix@C=6em{0 \ar[r] & B \pullback \ar@{.{ >>}}[d]_-{\beta} \ar@{{ |>}->}[r] & (K\flat L)\cosmash (K\flat M) \ar@{.{ >>}}[d]_-{\alpha} \ar@<.5ex>[r]^-{\mus{0}{1_{L}}\kappa_{K,L}\cosmash\mus{0}{1_{M}}\kappa_{K,M}} & L\cosmash M \ar@<.5ex>[l]^-{\eta_{L}\cosmash \eta_{M}} \ar@{=}[d] \ar[r] & 0\\
0 \ar[r] & K\cosmash L\cosmash M \ar@{{ |>}->}[r] & A \ar@<.5ex>[r] & L\cosmash M \ar@<.5ex>[l] \ar[r] & 0}}
\]
of which the bottom split short exact sequence is obtained via the ${3\times 3}$~diagram in Figure~\ref{3x3}, 
\begin{figure}[t]
\resizebox{\textwidth}{!}
{$\vcenter{\xymatrix@C=7em{
& 0 \ar[d] & 0 \ar[d] & 0 \ar[d] \\
0 \ar@{.>}[r] & K\cosmash L\cosmash M \ar@{{ |>}->}[d] \ar@{{ |>}.>}[r] & A \ar@{{ |>}->}[d] \ar@{.>}@<.5ex>[r] &  \ar@{.>}@<.5ex>[l] \ar@{.>}[r] L\cosmash M \ar@{{ |>}->}[d]^-{\iota_{L,M}} & 0\\
0 \ar[r] & K\cosmash(L+M) \ar@{-{ >>}}[d]_-{\left\links\begin{smallmatrix} K\cosmash\mus{1_{L}}{0}\\ K\cosmash\mus{0}{1_{M}}\end{smallmatrix}\right\rechts} \ar@{{ |>}->}[r]^-{j_{K,L+M}} & K\flat(L+M) \ar@<.5ex>[r]^-{\mus{0}{1_{L+M}}\kappa_{K,L+M}} \ar@{-{ >>}}[d]_-{\left\links\begin{smallmatrix} K\flat\mus{1_{L}}{0}\\ K\flat\mus{0}{1_{M}}\end{smallmatrix}\right\rechts} & L+M \ar@<.5ex>[l]^-{\eta_{L+M}} \ar@{-{ >>}}[d]^-{\sigma_{L,M}} \ar[r] & 0\\
0 \ar[r] & (K\cosmash L)\times (K\cosmash M) \ar@{{ |>}->}[r]_-{j_{K,L}\times j_{K,M}} \ar[d] & (K\flat L)\times (K\flat M) \ar[d] \ar@<.5ex>[r]^-{\mus{0}{1_{L}}\kappa_{K,L}\times\mus{0}{1_{M}}\kappa_{K,M}} & \ar@<.5ex>[l]^-{\eta_{L}\times \eta_{M}} L\times M \ar[d] \ar[r] & 0\\
& 0 & 0 & 0}}
$}
\caption{An alternative computation of $K\cosmash L\cosmash M$}\label{3x3}
\end{figure}
we have a vertical strong epimorphism on the left. Indeed, the left hand side vertical sequence in Figure~\ref{3x3} is exact by~\cite[Remark 2.8]{HVdL}, and the right hand one by definition of $L\cosmash M$. Its middle and bottom horizontal sequences are exact by Proposition~2.7 in~\cite{Actions} and because products preserve short exact sequences.

We have that $K\flat M = {(K\cosmash M) \join M}$ in $K+M$ and $K\flat L = (K\cosmash L) \join L$ in $K+L$ by~\cite[Proposition~2.7]{Actions}, so we may use Proposition~2.22 in~\cite{HVdL} to see that $(K\flat L)\cosmash (K\flat M)$ is covered by
\[
L\cosmash (K\flat M)+ (K\cosmash L)\cosmash (K\flat M)+ L\cosmash (K\cosmash L)\cosmash (K\flat M),
\]
which by further decomposition using~\cite[Proposition~2.22]{HVdL} gives us a strong epimorphism from $(L\cosmash M)+S$ to $(K\flat L)\cosmash (K\flat M)$, where $S$ is
\begin{align*}
& L\cosmash (K\cosmash M)+ L\cosmash M\cosmash (K\cosmash M)
+ (K\cosmash L)\cosmash M\\
&+ (K\cosmash L)\cosmash (K\cosmash M)+ (K\cosmash L)\cosmash M\cosmash (K\cosmash M)
+ L\cosmash (K\cosmash L)\cosmash M\\
& + L\cosmash (K\cosmash L)\cosmash (K\cosmash M)+ L\cosmash (K\cosmash L)\cosmash M\cosmash (K\cosmash M).
\end{align*}
Via the diagram
\[
\resizebox{\textwidth}{!}{
\xymatrix@C=7em{0 \ar[r] & (L\cosmash M)\flat S \pullback \ar@{.{ >>}}[d]_-{\gamma} \ar@{{ |>}->}[r]^-{\kappa_{L\cosmash M,S}} & (L\cosmash M)+ S \ar@{.{ >>}}[d] \ar@<.5ex>[r]^-{\mus{1_{L\cosmash M}}{0}} & L\cosmash M \ar@<.5ex>[l]^-{\iota_{L\cosmash M}} \ar@{=}[d] \ar[r] & 0\\
0 \ar[r] & B \ar@{{ |>}->}[r] & (K\flat L)\cosmash (K\flat M) \ar@<.5ex>[r]^-{\mus{0}{1_{L}}\kappa_{K,L}\cosmash\mus{0}{1_{M}}\kappa_{K,M}} & L\cosmash M \ar@<.5ex>[l]^-{\eta_{L}\cosmash \eta_{M}} \ar[r] & 0}}
\]
it induces a strong epimorphism $\beta\gamma$ from $(L\cosmash M)\flat S = S\vee (L\cosmash M)\cosmash S$ to $K\cosmash L\cosmash M$. We thus obtain a strong epimorphism from $S + (L \cosmash M) \cosmash S$ to $K\cosmash L\cosmash M$. Considering $K$, $L$ and $M$ as subobjects of~$X$ now, we take the images of the induced arrows to~$X$ to see that $[K,L,M]=\overline{S}\join [[L,M],\overline{S}]$ in~$X$, where $\overline{S}$ is the image of ${S\to X}$. Now $\overline{S}$, being
\begin{align*}
&[L,[K,M]]\join [L,M,[K,M]]\join [[K,L],M]\\
&\join [[K,L],[K,M]]\join[[K,L],M,[K,M]]\join [L,[K,L],M]\\
&\join [L,[K,L],[K,M]]\join [L,[K,L],M,[K,M]],
\end{align*}
is contained in $[L,[K,M]]\join [[K,L],M]$ by Proposition~2.21 in~\cite{HVdL}, using \SH\ in the form of~\cite[Theorem~4.6]{HVdL}, using \NH\ and the fact that $K$, $L$ and~$M$ are normal. Indeed, $[L,M,[K,M]]\leq [L,X,[K,M]]\leq [L, [K,M]]$ by \SH; note that Theorem~4.6 in~\cite{HVdL} is applicable because $[K,M]$ is normal in $X$ by \NH\ and normality of $K$ and $M$. Similarly, $[[K,L],[K,M]]\leq [L,[K,M]]$, and likewise for the other terms of the join. 

Hence 
\begin{align*}
[K,L,M]&=\overline{S}\join [[L,M],\overline{S}]\\
&\leq [L,[K,M]]\join [[K,L],M]\join \bigl[[L,M],[L,[K,M]]\join [[K,L],M]\bigr]\\
&\leq [L,[K,M]]\join [[K,L],M]
\end{align*}
because $[L,[K,M]]\join [[K,L],M]$ is normal as a join of normal subobjects, so that
\[
\bigl[[L,M],[L,[K,M]]\join [[K,L],M]\bigr]\leq [L,[K,M]]\join [[K,L],M]
\]
by~\cite[Proposition~6.1]{MM-NC}. Since the other inclusion
\[
[K,L,M]\geq [L,[K,M]]\join [[K,L],M]
\]
holds by~\cite[Proposition~2.21]{HVdL}, this finishes the proof.
\end{proof}

As a consequence, in any algebraically coherent semi-abelian category, the two natural, but generally non-equivalent, definitions of \emph{two-nilpotent object}---$X$ such that either $[X,X,X]$ or $[[X,X],X]$ vanishes, see also Section~\ref{Subsection Diamond}---coincide:

\begin{corollary}
In an algebraically coherent semi-abelian category,
\[
[X,X,X]=[[X,X],X]
\]
holds for all objects $X$.\noproof
\end{corollary}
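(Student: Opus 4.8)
The plan is to reduce to the non-trivial inclusion $[K,L,M]\leq [[K,L],M]\join[[M,K],L]$, the reverse inclusion $[K,L,M]\geq [[K,L],M]\join[[M,K],L]$ being valid in any semi-abelian category by \cite[Proposition~2.21]{HVdL}. To bound $[K,L,M]$ from above I would manufacture a strong epimorphism onto the ternary co-smash product $K\cosmash L\cosmash M$ out of a coproduct of iterated co-smash products whose images along $\muss{k}{l}{m}\colon K+L+M\to X$ visibly land inside the right-hand side, and then take regular images throughout.

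First I would realise $K\cosmash L\cosmash M$ through the kernel $A$ of the canonical map $K\flat(L+M)\to (K\flat L)\times(K\flat M)$: a $3\times 3$ diagram assembled from the defining sequences of the co-smash products and from the split short exact sequence of Proposition~2.7 in~\cite{Actions} produces a split short exact sequence $0\to K\cosmash L\cosmash M\to A\to L\cosmash M\to 0$. This is where algebraic coherence enters, in the form of Theorem~\ref{Theorem Bemol}: the coproduct injections $(\iota_L,\iota_M)$ are jointly strongly epimorphic and $K\flat(-)$ preserves such pairs, so the comparison $\mus{K\flat\iota_L}{K\flat\iota_M}\colon (K\flat L)+(K\flat M)\to K\flat(L+M)$ is a strong epimorphism; chasing the comparison of short exact sequences then shows that the induced map $\alpha\colon (K\flat L)\cosmash(K\flat M)\to A$ on kernels is a strong epimorphism as well.

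Next I would exploit the decompositions $K\flat L=(K\cosmash L)\join L$ and $K\flat M=(K\cosmash M)\join M$ (again Proposition~2.7 in~\cite{Actions}) inside the join-distributivity statement \cite[Proposition~2.22]{HVdL}, expanding $(K\flat L)\cosmash(K\flat M)$ as a subobject covered by $L\cosmash M$ together with a large sum $S$ of higher co-smash products of $K$, $L$, $M$. Composing this covering with $\alpha$, and absorbing the $L\cosmash M$ summand via one more $\flat$-decomposition $(L\cosmash M)\flat S=S\join (L\cosmash M)\cosmash S$, yields a strong epimorphism from $S+(L\cosmash M)\cosmash S$ onto $K\cosmash L\cosmash M$. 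Passing to images in $X$ gives the identity $[K,L,M]=\overline S\join[[L,M],\overline S]$, where $\overline S$ is the image of $S$ in $X$.

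The crux is to bound $\overline S$, which unwinds to a join of eight iterated commutators of $K$, $L$ and $M$; the goal is to show all of them lie in $[L,[K,M]]\join[[K,L],M]$, and this join is precisely the desired right-hand side since $[K,M]=[M,K]$. Each of the higher terms can be absorbed by enlarging an inner commutator factor to $X$ and invoking the identity $[A,X,B]\leq[A,B]$ furnished by \SH; here \cite[Theorem~4.6]{HVdL} applies because $[K,M]$ is normal, which follows from \NH\ together with normality of $K$ and $M$ (recall that \SH\ and \NH\ are available by Theorem~\ref{(SH) + (NH)}). Finally, as a join of normal subobjects the bound $[L,[K,M]]\join[[K,L],M]$ is itself normal, so \cite[Proposition~6.1]{MM-NC} absorbs the correction term $[[L,M],\overline S]$ into it and the inclusion closes. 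I expect this last stage---the bookkeeping that tracks the eight higher co-smash summands of $S$ and verifies each reduction under \SH\ and \NH---to be the main obstacle; everything upstream of it is essentially formal once coherence has been applied through Theorem~\ref{Theorem Bemol}.
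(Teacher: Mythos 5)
Your proposal is correct and takes essentially the same approach as the paper: it reconstructs, step for step, the paper's own proof of the Three Subobjects Lemma for normal subobjects (Theorem~\ref{3SO}) --- the same $3\times 3$ diagram, the same use of Theorem~\ref{Theorem Bemol}, the decompositions from~\cite{Actions} and~\cite{HVdL}, the identity $[K,L,M]=\overline{S}\join[[L,M],\overline{S}]$, and the absorption of $\overline{S}$ via \SH, \NH\ and~\cite[Proposition~6.1]{MM-NC} --- from which the corollary follows at once by taking $K=L=M=X$, both joinands then being $[[X,X],X]$. The only difference is presentational: the paper derives the corollary as an immediate consequence of the already-proved theorem, whereas you re-derive that theorem inline.
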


Note that, since one of its entries is $X$, the commutator on the right is a normal subobject of $X$, which makes it coincide with the Huq commutator $[[X,X]_{X},X]_{X}$. Furthermore, by Proposition~2.2 in~\cite{Gran-VdL}, this commutator vanishes is and only if the Smith commutator $[[\nabla_{X},\nabla_{X}],\nabla_{X}]$ does. This implies that the normal subobject $[X,X,X]$ is the normalisation of the equivalence relation $[[\nabla_{X},\nabla_{X}],\nabla_{X}]$.

\section{Summary of results in the semi-abelian context}
In this section we give several short summaries.
We begin with a summary of conditions that follow from algebraic coherence for a semi-abelian category $\C$:
\begin{enumerate}
\item preservation of Higgins and Huq commutators, normal closures and cokernels by change-of-base functors with respect to the fibration of points, see~\ref{thm summary};
\item \SH\ and \NH, see~\ref{(SH) + (NH)};
\item as a consequence---see~\ref{Peri-abelian}---the category $\C$ is necessarily peri-abelian and thus satisfies the universal central extension condition;
\item \SSH, see~\ref{(SSH)};
\item strong protomodularity, see~\ref{(SSH) => (SP)};
\item fibre-wise algebraic cartesian closedness \FWACC, if $\C$ is a variety, see~\ref{(FWACC)} and~\cite{Bourn-Gray,Gray2012};
\item $[K,L,M]=[[K,L],M]\join[[M,K],L]$ for $K$, $L$, $M\normal X$, see~\ref{3SO}.
\end{enumerate}

Next we give a summary of semi-abelian categories which are algebraically coherent.
These include all abelian categories; all \emph{categories of interest} in the sense of Orzech: (all subvarieties of) groups, the varieties of Lie algebras, Leibniz algebras, rings, associative algebras, Poisson algebras; cocommutative Hopf algebras over a field of characteristic zero; $n$-nilpotent or $n$-solvable groups, rings, Lie algebras; internal reflexive graphs, categories and (pre)crossed modules in such; compact Hausdorff algebras over such; arrows, extensions and central extensions in such---note, however, that the latter two categories are only homological in general.

Finally we give a summary of semi-abelian categories which are not algebraically coherent.
These include (commutative) loops, digroups, non-associative rings, Jordan algebras.

\section*{Acknowledgement}

We thank the referee for careful reading of our manuscript and detailed comments on it, which lead to the present improved version.


\providecommand{\noopsort}[1]{}
\providecommand{\bysame}{\leavevmode\hbox to3em{\hrulefill}\thinspace}
\providecommand{\MR}{\relax\ifhmode\unskip\space\fi MR }
\providecommand{\MRhref}[2]{%
  \href{http://www.ams.org/mathscinet-getitem?mr=#1}{#2}
}
\providecommand{\href}[2]{#2}

\end{document}